\newcommand{\fk}{\mathfrak k}
\newcommand{\fh}{\mathfrak h}
\newcommand{\fc}{\mathfrak c}
\newcommand{\Ad}{{\rm Ad}}
\newcommand{\ad}{{\rm ad}}
\newtheorem{thm}{Theorem}[section]
\newtheorem{prop}{Proposition}[section]
\newtheorem*{prop*}{Proposition}
\newtheorem*{thm*}{Theorem}
\newtheorem{lem}{Lemma}[section]
\newtheorem*{lemma*}{Lemma}
\newtheorem*{cor*}{Corollary}
\newtheorem*{rem*}{Remark}
\theoremstyle{definition}
\newtheorem{def*}{Definition}[section]
\theoremstyle{remark}
\begin{document}
\title{Irreducible Killing and conformal Killing tensors on homogeneous plane waves}

\author{Jan Gregorovi\v c$^1$ and Lenka Zalabov\' a$^2$}
\address{
	$^1$Department of Mathematics, Faculty of Science, University of Ostrava, 701 03 Ostrava, Czech Republic, and Institute of Discrete Mathematics and Geometry, TU Vienna, Wiedner Hauptstrasse 8-10/104, 1040 Vienna, Austria. \\ ORCID ID: 0000-0002-0715-7911}\email{ jan.gregorovic@seznam.cz}

\address{$^2$Institute of Mathematics, 
Faculty of Science, University of South Bohemia, Brani\v sovsk\' a 1760, 370 05 \v Cesk\' e Bud\v ejovice, Czech Republic, and Department of Mathematics and Statistics, 
 	Faculty of Science, Masaryk University,	Kotl\'{a}\v{r}sk\'{a} 2, 611 37 Brno, 
Czech Republic. \\ ORCID ID: 0000-0002-3612-7068 }
\email{lzalabova@gmail.com}
    
\keywords{homogeneous conformal geometry; first BGG operator; (conformal) Killing vectors; irreducible (conformal) Killing tensors;}
\thanks{J.G. was supported by Austrian Science Fund (FWF): P34369; L.Z. is supported by the grant GACR 24-10887S Cartan supergeometries and Higher Cartan geometries.}
\subjclass[2020]{53C18; 53C30; 53B30; 58J70; 58J60}

\begin{abstract}
This paper presents a classification of irreducible Killing and conformal Killing 2-tensors on homogeneous plane waves, a specific class of Lorentzian metrics on four-dimensional manifolds. Using the framework of BGG operators, we derive explicit formulae for these tensors and identify the conditions under which they exist.
\end{abstract}
\maketitle

\section{Introduction}

In this paper, we consider a class of Lorentzian metrics $g$, homogeneous plane waves, on four-dimensional manifolds $M$. 
Homogeneous plane waves form a special class of Lorentzian spaces with a parallel null vector field, \cite{ML}. They play a crucial role in studying Lorentzian holonomy, \cite{hol}. In four dimensions, they serve as exact solutions to Einstein's field equations, \cite{stephani}.
It was recently shown in \cite{AG} that homogeneous plane waves are the only Lorentzian metrics that admit transitive essential conformal groups. 

The homogeneous plane waves, according to \cite{ML}, form two $3$-parameter classes of space-time metrics parametrized by $a_1,a_2,\gamma\in \mathbb{R}$ that in Brinkmann coordinates $(x_+,z_1,z_2,x_-)$ take the form
\begin{align*}
g_{0,a_1,a_2,\gamma}&:=2dx_+dx_-+dz_1^2+dz_2^2
+[z_1,z_2]\exp({\left[ \begin{smallmatrix}0 & -\gamma x_+\\ \gamma x_+& 0 \end{smallmatrix}\right]})\left[ \begin{smallmatrix}a_1 & 0\\ 0& a_2 \end{smallmatrix}\right]\exp({\left[ \begin{smallmatrix}0 & \gamma x_+\\- \gamma x_+ & 0 \end{smallmatrix}\right]})\left[ \begin{smallmatrix}z_1 \\z_2 \end{smallmatrix}\right]dx_+^2,\\
g_{1,a_1,a_2,\gamma}&:=2dx_+dx_-+dz_1^2+dz_2^2\\
&\: \: +[z_1,z_2]\exp({\left[ \begin{smallmatrix}0 & -\gamma\ln(x_+)\\ \gamma\ln(x_+)& 0 \end{smallmatrix}\right]})\left[ \begin{smallmatrix}a_1 & 0\\ 0& a_2 \end{smallmatrix}\right]\exp({\left[ \begin{smallmatrix}0 & \gamma\ln(x_+)\\- \gamma\ln(x_+)& 0 \end{smallmatrix}\right]})\left[ \begin{smallmatrix}z_1 \\z_2 \end{smallmatrix}\right]x_+^{-2}{dx_+^2}.
\end{align*}
Moreover, metrics $$g_{1,a_1,a_2,\gamma} \simeq g_{0,a_1+\frac14,a_2+\frac14,\gamma}$$ are conformally equivalent,  \cite{HS}, and metrics $$g_{0,a_1,a_2,\gamma} \simeq g_{0,\lambda^2a_1,\lambda^2a_2,\lambda\gamma} \simeq g_{0,a_2,a_1,\gamma}$$ are isometric, \cite{ML}. From a global viewpoint, homogeneous plane waves are investigated in \cite{HMZ}.

This paper aims to answer the question of which of these homogeneous plane waves carry irreducible Killing and conformal Killing tensors, respectively, \cite[Section 35.3]{stephani}.  Let us recall that Killing vectors are solutions to the equation 
$$\nabla_{(a}k_{b)}=0$$
acting on vectors $k_b$; here $g$ provides isomorphism between vectors and forms, and the parentheses around indexes mean symmetrization. These are the infinitesimal isometries of the homogeneous plane waves. 
Conformal Killing vectors are solutions to the equation 
 $$\nabla_{(a}k_{b)_0}=0$$
acting on vectors $k_b$; the subscript ${}_{0}$ indicates that we take the trace-free part. These are the infinitesimal conformal symmetries of the homogeneous plane waves.

Generalizing these equations for symmetric tensors $k_{bc}=k_{(bc)}$, one obtains the so-called Killing and conformal Killing ($2$-)tensors, sometimes referred to as hidden symmetries. There are also higher-order (conformal) Killing tensors, \cite[Section 35.3]{stephani}, which we will not study in this article and thus omit the prefix $2$. 
More precisely, Killing tensors are solutions to the equation
$$\nabla_{(a}k_{bc)}=0,$$
while conformal Killing tensors are trace-free solutions, i.e. satisfying $k_{bc}=k_{(bc)_0}$, to the equation
$$\nabla_{(a}k_{bc)_0}=0.$$

Two Killing vectors $k_a,k^\prime_a$ can be combined into a Killing tensor $k_{(a}k^\prime{}_{b)}$ and the metric itself is also a Killing tensor. These are called reducible Killing tensors, and a Killing tensor is irreducible if it cannot be written as a linear combination of reducible Killing tensors. Similarly, two conformal Killing tensors $k_a,k^\prime_a$ can be combined into a conformal Killing tensor $k_{(a}k^\prime{}_{b)_0}$. These are called reducible conformal Killing tensors, and a conformal Killing tensor is irreducible if it cannot be written as a linear combination of reducible conformal Killing tensors.

To study the tensors in question, we employ the fact that they are in the kernel of suitable overdetermined differential operators.
Moreover, these operators belong to the family of the so-called BGG operators depending on either

\begin{enumerate}
\item  the projective class $[\nabla]$ of the Levi-Civita connection $\nabla$ of $g$ in the case of Killing vectors and tensors, \cite{GL} (projective BGG operators), or
\item  the conformal class $[g]$ of $g$  in the case of conformal Killing vectors and tensors, \cite{Sem} (conformal BGG operators).
\end{enumerate}

We developed the theory for BGG operators on projective and conformal homogeneous manifolds in \cite{bgg-my} and \cite{bgg-conf}. In fact, we have shown that the solutions can be computed algebraically for particular homogeneous plane waves; we review the details in Section \ref{sec-bgg-how}. However, the parameters present new challenges that we must overcome in this paper. In particular, we can compute generic spaces of Killing and conformal Killing tensors. Still, in Propositions \ref{generic-CKT},  \ref{generic-fKT} and  \ref{generic-KT}, we show that these are reducible in most cases. Therefore, we need to investigate the algebraic subvarieties with non-generic equations where additional solutions can appear. These are determined by overdetermined systems of equations for the parameters we need to solve. For the conformal Killing tensors, we provide the complete analysis in Section \ref{sec-CKT}. We give explicit formulae for the irreducible conformal Killing tensors in Proposition \ref{expicit-CKT}. In particular, we prove the following theorem in Section \ref{sec-CKT}.

\begin{thm}\label{main-CKT}
\begin{enumerate}
\item The homogeneous plane waves $g_{0,\pm\frac{8}{3},\pm\frac{2}{3},0}$ have nine-dimensional spaces of irreducible conformal Killing tensors. 
\item The homogeneous plane wave $g_{0,-1,3 ,1}$ has a two-dimensional space of irreducible conformal Killing tensors.
\item The other homogeneous plane waves $g_{0,a_1,a_2, \gamma}$ and 
$g_{1,a_1,a_2, \gamma}$ are either (conformally) isometric to the cases (1) or (2) or do not possess any irreducible conformal Killing tensors.
\end{enumerate}
\end{thm}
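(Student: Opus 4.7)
My plan is to reduce the classification to a finite algebraic problem in the parameter space via the homogeneous BGG framework recalled in Section \ref{sec-bgg-how}. Since $g_{1,a_1,a_2,\gamma}$ is conformally equivalent to $g_{0,a_1+\frac14,a_2+\frac14,\gamma}$ by \cite{HS}, I only need to treat the family $g_{0,a_1,a_2,\gamma}$, and I will use the symmetries $g_{0,a_1,a_2,\gamma}\simeq g_{0,\lambda^2 a_1,\lambda^2 a_2,\lambda\gamma}$ and $g_{0,a_1,a_2,\gamma}\simeq g_{0,a_2,a_1,\gamma}$ to normalize parameters and to keep the list of candidate exceptional values finite.

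The first step is to invoke Proposition \ref{generic-CKT}, which provides an explicit basis of the solution space of the conformal BGG operator governing conformal Killing $2$-tensors for generic $(a_1,a_2,\gamma)$. All generic solutions are reducible, being linear combinations of symmetrized products $k_{(a}k'_{b)_0}$ of conformal Killing vectors and multiples of the metric. Consequently, an irreducible CKT can only exist on an algebraic subvariety $V\subset\mathbb{R}^3$ in $(a_1,a_2,\gamma)$, cut out by the vanishing of certain minors that encode rank drops in the parameter-dependent prolongation system; the corresponding exceptional solutions are those recorded in Proposition \ref{expicit-CKT}.

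Next, I would solve the polynomial conditions defining $V$. The analysis splits naturally into $\gamma=0$, where the rotational coupling is absent and the prolongation decouples in the transverse coordinates $z_1,z_2$, and $\gamma\neq 0$, where rotations couple the two transverse directions. In each case the resulting equations are polynomial in $(a_1,a_2,\gamma)$ and should admit, modulo the scaling and swap symmetries, only finitely many real solutions. A careful elimination, guided by Proposition \ref{expicit-CKT}, should show that the only exceptional parameters are (up to the aforementioned symmetries and signs) $(a_1,a_2,\gamma)=(\tfrac{8}{3},\tfrac{2}{3},0)$ and $(-1,3,1)$. At each exceptional point the BGG system is solved explicitly and compared with the space of reducible CKTs generated from the (conformal) Killing vectors of that particular plane wave, giving the $9$-dimensional, respectively $2$-dimensional, quotient of irreducible CKTs.

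The main obstacle is the case $\gamma\neq 0$. There the $x_+$-dependence of the prolongation, entering through the rotation $\exp(\gamma x_+ J)$, must be carefully disentangled from the purely algebraic conditions on $(a_1,a_2,\gamma)$ in order to reduce the problem to a finite elimination. This is precisely the payoff of the homogeneous BGG framework of \cite{bgg-conf} and Section \ref{sec-bgg-how}: it replaces the overdetermined PDE for CKTs by linear equations on a finite-dimensional representation, thus turning the existence question into a tractable rank analysis of matrices whose entries are polynomials in $a_1,a_2,\gamma$. Once that reduction is in place, the three clauses of the theorem follow from a direct, if tedious, case distinction together with the symmetry reductions.
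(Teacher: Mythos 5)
Your proposal follows essentially the same route as the paper: reduce to $\epsilon=0$ via the conformal equivalence, use the homogeneous BGG prolongation to show that the generic solution space (Proposition \ref{generic-CKT}) is entirely reducible, and then search for extra solutions on the exceptional algebraic subvariety of parameters, branching on $\gamma=0$ (symmetric spaces, normalized to $a_2=\pm2+a_1$) versus $\gamma\neq 0$ (normalized to $\gamma=1$), exactly as in Section \ref{sec-CKT}. The one point you do not address is the conformally flat locus $a_1=a_2$, which lies outside the scope of Proposition \ref{generic-CKT}; the paper disposes of it by noting that there the $84$ reducible conformal Killing tensors already exhaust the full $84$-dimensional solution space $\mathbb{V}$, so no irreducible ones can occur.
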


 For Killing tensors on conformally flat homogeneous plane waves, we provide the complete analysis in Section \ref{sec-fKT}. We give explicit formulae for the irreducible conformal Killing tensors in Proposition \ref{expl-fKT}. These were previously investigated by different methods in \cite{KO} and we can confirm their results (including the case that was discarded in \cite[Proof of Theorem 2]{KO}). In particular, we prove the following theorem in Section \ref{sec-fKT}.

\begin{thm} \label{main-fKT}
\begin{enumerate}
\item
The homogeneous plane waves $ g_{0,\pm 1,\pm 1,0}$ and $g_{1,\frac34,\frac34,0}$ have one-dimensional spaces of irreducible Killing tensors. 

\item The homogeneous plane wave $g_{1,-\frac3{16},-\frac3{16},0}$ has a six-dimensional space of irreducible Killing tensors.

\item The other homogeneous plane waves $g_{0,a_1,a_2, \gamma}$ and $g_{1,a_1,a_2, \gamma}$ are either isometric to the cases (1) or (2) or do not possess any irreducible Killing tensors.
\end{enumerate}
\end{thm}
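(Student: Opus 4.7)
The plan is to apply the projective BGG operator framework developed in \cite{bgg-my} and summarized in Section~\ref{sec-bgg-how} to the Killing tensor equation $\nabla_{(a}k_{bc)}=0$ on each homogeneous plane wave $g_{\epsilon,a_1,a_2,\gamma}$ with $\epsilon\in\{0,1\}$. Homogeneity reduces the prolonged system to a finite-dimensional linear problem whose coefficient matrices depend polynomially on the parameters $(a_1,a_2,\gamma)$. First I would invoke Proposition~\ref{generic-KT}, which supplies the generic solution space, and check directly that on the open parameter stratum every solution is reducible, i.e.\ a linear combination of $g_{bc}$ and symmetric products $k_{(b}k'_{c)}$ of the Killing vectors coming from the transitive isometry action. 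This already accounts for part (3) of the theorem on the open set of parameter values.

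The next step is to locate the algebraic strata in $(a_1,a_2,\gamma)$ where the rank of the governing matrices drops and extra solutions can appear. These strata are cut out by a short list of polynomial equations extracted from the BGG obstructions; the overdetermined system is then to be solved modulo the isometries $g_{0,a_1,a_2,\gamma}\simeq g_{0,\lambda^2 a_1,\lambda^2 a_2,\lambda\gamma}\simeq g_{0,a_2,a_1,\gamma}$ recalled in the introduction. I expect the leading obstruction to force $\gamma=0$, reflecting that a nonzero twist destroys the rotational symmetry needed to support hidden quadratic invariants; the residual system in $(a_1,a_2)$ together with the choice $\epsilon\in\{0,1\}$ should then admit only the sporadic representatives $g_{0,\pm 1,\pm 1,0}$, $g_{1,\frac34,\frac34,0}$, and $g_{1,-\frac3{16},-\frac3{16},0}$ listed in (1) and (2).

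For each exceptional point I would write down the additional solutions explicitly, as recorded in Proposition~\ref{expl-fKT}, obtaining a one-dimensional extra space in the first three cases and a six-dimensional extra space for $g_{1,-\frac3{16},-\frac3{16},0}$. Irreducibility then follows by a dimension count: one lists the full Killing algebra at the exceptional point, forms the space of reducible Killing tensors spanned by $g_{bc}$ together with symmetric products of Killing vectors, and verifies that the extra solutions do not lie in this span. The main obstacle will be completeness of the case analysis, namely ensuring that no further isolated parameter values or embedded components of the obstruction variety have been overlooked. A Gr\"obner-basis argument combined with a matching against the isometry orbit structure handles this, and the resulting list agrees with the independent treatment in \cite{KO}, including the case that was discarded there.
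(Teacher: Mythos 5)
Your overall strategy---prolong the Killing tensor equation via the projective BGG machinery, compute the generic solution space as a function of the parameters, stratify the parameter space where the rank drops, and certify irreducibility by a dimension count against the reducible span---is exactly the skeleton of the paper's proof in Section~\ref{sec-fKT}. However, there is a concrete error in where you expect the irreducible tensors to come from, and it would cause your argument to miss half of part (1). You assert that on the open parameter stratum every generic solution is reducible and that the tensors of the theorem all arise as \emph{extra} solutions at sporadic exceptional points. This is false for $\epsilon=0$: Proposition~\ref{generic-fKT} shows that for every $a=a_1=a_2\neq 0$ the generic solution space is $28$-dimensional, while by Proposition~\ref{reducible-prop}(4) the reducible Killing tensors span only $27$ dimensions when $\epsilon=0$. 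The one-dimensional excess \emph{is} the irreducible Killing tensor of $g_{0,\pm 1,\pm 1,0}$ (every $g_{0,a,a,0}$ with $a\neq 0$ being isometric to one of these by rescaling); it lives in the generic stratum, not at a special value of $a$. Indeed, in the paper's subsequent branch analysis the symmetric-space case $\epsilon=0$, $a=\pm 1$ produces \emph{no} additional solutions in the complement $C$ of $\mathbb{S}^{gen}$. So if you follow your plan literally---declare the generic stratum reducible and hunt only for rank drops---you will correctly find $g_{1,\frac34,\frac34,0}$ and $g_{1,-\frac3{16},-\frac3{16},0}$ (which do arise from the special strata $a=\frac34$ and $a=-\frac3{16}$ inside the $\epsilon=1$ branch), but you will conclude that $g_{0,\pm1,\pm1,0}$ has no irreducible Killing tensors.

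Two smaller points. First, you cite Proposition~\ref{generic-KT}, which governs the conformally non-flat case $a_1\neq a_2$; the relevant result here is Proposition~\ref{generic-fKT}. Second, your expectation that ``the leading obstruction forces $\gamma=0$'' is vacuous in this theorem: for $a_1=a_2$ the parameter $\gamma$ does not enter the metric at all (the $\gamma$-rotation is itself an isometry), so there is nothing to force. The genuine branching in the paper is instead on $\epsilon$ via the distinguished conditions $\epsilon v_{33}=\epsilon v_{34}=0$ cutting out $C_0$, followed by the factorizations $v_{31}(4a-3)=0$, $v_{31}(16a+3)(4a-3)=0$ in the $\epsilon=1$ branch; your Gr\"obner-basis completeness check is the right instinct, but it must be run against these equations rather than against a hypothesized $\gamma$-obstruction.
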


For the Killing tensors on conformally non-flat homogeneous plane waves, we provide the analysis in Section \ref{sec-KT}.
We find the explicit formulae for the irreducible conformal Killing tensors in Proposition \ref{expl-KT}. In particular, we prove the following theorem in Section \ref{sec-KT}.

\begin{thm}\label{main-KT}
\begin{enumerate}
    \item 
The homogeneous plane waves $g_{0,a_1,a_2,\gamma}$, $g_{1,0,a_2,0}$ and $g_{1,4a_2+\frac34,a_2,0}$ have one-dimensional spaces of irreducible Killing tensors.

\item The homogeneous plane waves $g_{0,0,\pm 2,0}$ and $g_{0,\pm \frac83,\pm \frac23,0}$ have two-dimensional spaces of irreducible Killing tensors.

\item The homogeneous plane wave $g_{1,0,\frac34,0}$ has a five-dimensional space of irreducible Killing tensors.

\item The homogeneous plane wave $g_{1,0,-\frac3{16},0}$ has a six-dimensional space of irreducible Killing tensors.

\item The other homogeneous plane waves $g_{0,a_1,a_2, \gamma}$ and $g_{1,a_1,a_2, \gamma}$ are either isometric to the above cases or do not possess any irreducible Killing tensors.
\end{enumerate}
\end{thm}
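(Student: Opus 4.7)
The plan is to follow the same overall strategy used for Theorems \ref{main-CKT} and \ref{main-fKT}, but restricted to those homogeneous plane waves that are not conformally flat (so that Theorem \ref{main-fKT} does not already apply). First, I would invoke the projective BGG machinery reviewed in Section \ref{sec-bgg-how} together with Proposition \ref{generic-KT}, which gives the generic space of Killing tensors as an algebraic solution depending polynomially on $a_1, a_2, \gamma$. Since the generic solutions are (by that proposition) reducible, the task reduces to locating the algebraic subvariety in parameter space on which the rank of the prolonged system drops, i.e.\ on which extra solutions appear.

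Next I would extract from the prolongation the overdetermined polynomial system in $(a_1,a_2,\gamma)$ whose vanishing is equivalent to the existence of additional Killing tensors, and solve it case by case. The conformal non-flatness condition on $g_{0,a_1,a_2,\gamma}$ and $g_{1,a_1,a_2,\gamma}$ translates into an explicit inequality on the parameters, which is used throughout to discard the parameters already handled in Theorem \ref{main-fKT}. After eliminating solutions equivalent under the identifications $g_{0,a_1,a_2,\gamma} \simeq g_{0,\lambda^2 a_1, \lambda^2 a_2, \lambda\gamma} \simeq g_{0,a_2,a_1,\gamma}$, the remaining special loci should exactly be the families $g_{0,a_1,a_2,\gamma}$ (generic conformally non-flat case giving the $1$-dimensional family), together with the specializations $g_{1,0,a_2,0}$, $g_{1,4a_2+\tfrac34,a_2,0}$, $g_{0,0,\pm 2,0}$, $g_{0,\pm\tfrac83,\pm\tfrac23,0}$, $g_{1,0,\tfrac34,0}$, and $g_{1,0,-\tfrac{3}{16},0}$ appearing in items (1)--(4).

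For each surviving parameter family I would then compute the full kernel of the Killing tensor operator, write down explicit representatives (the formulae to be collected in Proposition \ref{expl-KT}), and quotient out the reducible tensors. Reducibility is controlled by knowing the Killing vector algebra of each plane wave, whose dimensions are accessible via the homogeneity structure; forming all symmetric products $k_{(a}k'_{b)}$ and adding multiples of $g_{ab}$ gives a finite-dimensional subspace whose codimension in the total Killing tensor space is precisely the dimension reported in items (1)--(4). To confirm consistency, the degenerations of the conformal Killing tensor data from Theorem \ref{main-CKT} (in particular the fact that a Killing tensor is a conformal Killing tensor with trace-free part subtracted) can be cross-checked against the listed cases, most notably for the overlap $g_{0,\pm\tfrac83,\pm\tfrac23,0}$.

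The main obstacle will be the last step in the non-generic analysis: the overdetermined polynomial system for $(a_1,a_2,\gamma)$ produces several branches, and for each one the prolongation has to be re-solved from scratch because coefficients that were generically invertible become singular. Book-keeping which solutions are genuinely irreducible, rather than hidden combinations of Killing vectors with the metric, is delicate in the most degenerate cases (in particular $g_{1,0,-\tfrac{3}{16},0}$, which shares parameters with the conformally flat case of Theorem \ref{main-fKT} and therefore requires careful separation of the two origins of extra solutions), and this is where the argument will need the most care.
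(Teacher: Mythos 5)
Your overall strategy is the one the paper follows: compute the prolongation connection for the projective BGG operator, determine the generic invariant subspace, and then hunt for extra solutions on special parameter loci, using the count of reducible Killing tensors from Proposition \ref{reducible-prop} to certify irreducibility. However, there is a genuine logical slip in your reduction. You assert that ``the generic solutions are (by that proposition) reducible,'' but Proposition \ref{generic-KT} says this only for $\epsilon\neq 0$: for $\epsilon=0$ the generic $22$-dimensional solution space strictly contains the $21$-dimensional space of reducible Killing tensors, and the one extra dimension is precisely the irreducible tensor $\big(z_1\partial_{z_1}+z_2\partial_{z_2}+2x_-\partial_{x_-}\big)\partial_{x_-}-x_+g_{0,a_1,a_2,\gamma}^{-1}$ responsible for item (1) of the theorem for \emph{all} $g_{0,a_1,a_2,\gamma}$. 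If you literally reduce the problem to ``locating the algebraic subvariety in parameter space on which extra solutions appear,'' you miss this family, since it lives over the entire $\epsilon=0$ parameter space rather than over a proper subvariety; your later parenthetical acknowledging the ``generic conformally non-flat case giving the $1$-dimensional family'' contradicts the stated reduction and needs to be reconciled with it.

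Two further points. First, your remark that $g_{1,0,-\frac{3}{16},0}$ ``shares parameters with the conformally flat case'' is off: that metric has $a_1=0\neq a_2=-\tfrac{3}{16}$, so it is conformally non-flat and entirely disjoint from the metric $g_{1,-\frac{3}{16},-\frac{3}{16},0}$ of Theorem \ref{main-fKT}; no separation of ``two origins of extra solutions'' arises there. Second, the hardest step in the paper's argument is not the book-keeping you anticipate but the branch $\epsilon=1$ with $a_1,a_2,\gamma$ all nonzero, where the parametric linear systems obtained by iterating the invariance condition $\Phi(\fk)(C_0+\mathbb{S}^{gen})\subset C_0+\mathbb{S}^{gen}$ cannot be solved directly; the paper eliminates variables pairwise, passes to determinants of the coefficient matrices, and uses Groebner bases to show the resulting polynomial system in $(a_1,a_2,\gamma)$ has no solutions away from certain subvarieties, which are then handled separately. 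Your plan should budget for an argument of this kind in order to close item (5).
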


Finally, we compare Theorems \ref{main-KT} and \ref{main-CKT} and determine the irreducible Killing tensors that correspond to the irreducible conformal Killing tensors.

\begin{thm}\label{main-KTCKT}
\begin{enumerate}
    \item 
The homogeneous plane waves $g_{1,4a_2+\frac34,a_2,0}$ have one-dimensional spaces of irreducible Killing tensors corresponding to irreducible conformal Killing tensors.

\item The homogeneous plane waves $g_{0,\pm \frac83,\pm \frac23,0}$ have one-dimensional spaces of irreducible Killing tensors corresponding to irreducible conformal Killing tensors.

\item The homogeneous plane wave $g_{1,0,\frac34,0}$ has a four-dimensional space of irreducible Killing tensors corresponding to irreducible conformal Killing tensors.

\item The homogeneous plane wave $g_{1,0,-\frac3{16},0}$ has a five-dimensional space of irreducible Killing tensors corresponding to irreducible conformal Killing tensors.

\item The other homogeneous plane waves $g_{0,a_1,a_2, \gamma}$ and $g_{1,a_1,a_2, \gamma}$ are either isometric to the above cases or do not possess any irreducible Killing tensors corresponding to irreducible conformal Killing tensors.
\end{enumerate}
\end{thm}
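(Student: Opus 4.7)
The approach rests on the natural linear map
\[
\phi : \mathrm{KT} \to \mathrm{CKT}, \qquad \phi(k)_{bc} := k_{bc} - \tfrac14\, g_{bc}\, g^{de}k_{de},
\]
assigning to a Killing tensor its trace-free part with respect to $g$. Setting $\sigma := g^{de}k_{de}$, the identity
\[
\nabla_{(a}\phi(k)_{bc)} = \nabla_{(a}k_{bc)} - \tfrac14\, g_{(bc}\nabla_{a)}\sigma,
\]
together with the fact that $g_{(bc}v_{a)}$ is pure trace for every covector $v$, implies that $\phi(k)$ is a conformal Killing tensor whenever $k$ is a Killing tensor. Moreover, $\phi(g)=0$ and $\phi(k_{(b}k'_{c)}) = k_{(b}k'_{c)_0}$ is a reducible conformal Killing tensor whenever $k,k'$ are Killing vectors (since Killing vectors are conformal Killing vectors), so $\phi$ descends to a linear map
\[
\bar\phi : \mathrm{KT}/\mathrm{redKT} \longrightarrow \mathrm{CKT}/\mathrm{redCKT},
\]
and Theorem \ref{main-KTCKT} amounts to computing $\dim\mathrm{im}(\bar\phi)$ in each case.

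The plan is to traverse the cases of Theorem \ref{main-KT} and evaluate $\bar\phi$ using the explicit bases of irreducible Killing and conformal Killing tensors from Propositions \ref{expl-KT} and \ref{expicit-CKT}. Whenever the underlying metric admits no irreducible conformal Killing tensor by Theorem \ref{main-CKT}, the codomain of $\bar\phi$ is trivial and the case does not appear in Theorem \ref{main-KTCKT}. This automatically disposes of the generic families $g_{0,a_1,a_2,\gamma}$ and $g_{1,0,a_2,0}$ in Theorem \ref{main-KT}(1), as well as of $g_{0,0,\pm 2,0}$ in Theorem \ref{main-KT}(2). The nontrivial work is concentrated in the families that are conformally isometric (via the relations recorded in the Introduction) to $g_{0,\pm 8/3,\pm 2/3,0}$ or to $g_{0,-1,3,1}$: namely $g_{1,4a_2+3/4,a_2,0}$, $g_{0,\pm 8/3,\pm 2/3,0}$, $g_{1,0,3/4,0}$, $g_{1,0,-3/16,0}$, and $g_{0,-1,3,1}$.

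The main obstacle will be controlling $\mathrm{redCKT}$ on each of these model metrics. The passage from $\mathrm{redKT}$ to $\mathrm{redCKT}$ produces an enlargement through the trace-free symmetric products involving the additional (non-Killing) conformal Killing vectors, and the kernel of $\bar\phi$ consists of classes of irreducible Killing tensors whose trace-free parts land inside this enlargement. For each case I intend to write out explicitly a basis of $\mathrm{redCKT}$ extending $\phi(\mathrm{redKT})$, compute $\phi$ on each basis element of $\mathrm{KT}/\mathrm{redKT}$, and determine its class modulo $\mathrm{redCKT}$ by finite-dimensional linear algebra. The one-dimensional kernels of $\bar\phi$ appearing in $g_{0,\pm 8/3,\pm 2/3,0}$, $g_{1,0,3/4,0}$ and $g_{1,0,-3/16,0}$ should then be witnessed by specific irreducible Killing tensors whose trace-free parts are realizable through the extra conformal Killing vectors, while on $g_{1,4a_2+3/4,a_2,0}$ the image of $\bar\phi$ will be full, and on $g_{0,-1,3,1}$ the image will vanish. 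Collating these computations across all conformal-isometry classes yields the dimensions stated in Theorem \ref{main-KTCKT}.
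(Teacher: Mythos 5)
Your proposal is correct and follows essentially the same route as the paper: Theorem \ref{main-KTCKT} is obtained by deciding, for each irreducible Killing tensor listed in Proposition \ref{expl-KT}, whether its trace-free part lies in the space of reducible conformal Killing tensors of Proposition \ref{reducible-prop}, which is exactly the computation of $\ker\bar\phi$ you describe, and the paper likewise exhibits the kernel elements explicitly as a reducible conformal Killing tensor (built from the homothety $k^7$) plus a function times $g^{-1}$. The only cosmetic difference is that the paper certifies non-membership in the space of reducible conformal Killing tensors via the jets at the origin from Proposition \ref{reducible-prop} and the BGG complement structure, rather than via explicit global bases as you suggest.
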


\section{
Homogeneous plane waves in exponential coordinates
}

For the computations in this paper, we view the homogeneous plane waves as left-invariant metrics on the homogeneous space $K/H$ with Lie algebras $\fk=\langle k^1,k^2,k^3,k^4,k^5,k^6 \rangle, \fh=\langle k^5,k^6\rangle$, where we consider the Lie brackets (minus bracket of the Killing fields) 
\begin{gather*}
[k^1, k^2] = \gamma k^3+a_1k^5,\ \  [k^1, k^3] =-\gamma k^2+a_2k^6,\ \  [k^1, k^4] = -\epsilon k^4,\\
[k^1,k^5]=k^2-\epsilon k^5+\gamma k^6,\ \  [k^1,k^6]=k^3-\gamma k^5-\epsilon k^6,\ \ [k^2, k^5] =-k^4, [k^3,k^6]=-k^4,
\end{gather*}
found in \cite{ML,HMZ}. In the conformally flat case, where $a_1=a_2$, there is an additional Killing field that acts as a rotation on the planes $\langle k^2,k^3\rangle $ and $\langle k^5,k^6 \rangle$.

This description naturally suggests exponential coordinates for explicit computations. We consider the complement $\fc=\langle k^1,k^2,k^3,k^4 \rangle$ of $\fh$ in $\fk$ and on $K/H$, and we employ the exponential coordinates $$(x_1,x_2,x_3,x_4)\mapsto \exp(x_1k^1)\exp(x_2k^2+x_3k^3)\exp(x_4k^4)H.$$
As $ k^2,k^3,k^4,k^5,k^6 $ generate a Heisenberg Lie algebra, which is nilpotent, there is no problem with the exponentials as we can use the Baker-Campbell-Hausdorff formula. On the other hand, the adjoint action
\[ \Ad(\exp(x_1k^1))=\Ad \big(\exp ( \left[ \begin {smallmatrix} 0&0&0&0&0&0\\ 0&0&{
 -x_1}{ \gamma}&0&{ x_1}&0\\ 0&{ x_1}{
 \gamma}&0&0&0&{ x_1}\\ 0&0&0&{- x_1}\epsilon&0
&0\\ 0&{ x_1}{ a_1}&0&0&{ -x_1}\epsilon&{-
 x_1}{ \gamma}\\ 0&0&{ x_1}{ a_2}&0&{
 x_1}{ \gamma}&{ -x_1}\epsilon\end {smallmatrix} \right] 
) \big),\]
cannot be easily expressed for arbitrary values of the parameters. In particular, there are no simple coordinate formulae for the Killing fields $k^1,k^2,k^3,k^4,k^5,k^6$.
On the other hand, these exponential coordinates also provide a natural section of the projection $K\to K/H$ and therefore, we can speak about left-invariant vector fields in these exponential coordinates and obtain explicit coordinate formulae for them.

\begin{lem}\label{c-frame}
The left-invariant vector fields $e^1,e^2,e^3,e^4$ corresponding to $ k^1,k^2,k^3,k^4$ and the dual coframe $e_1^{*},e_2^{*},e_3^{*},e_4^{*}$ (do not confuse with the dual frame $ e_1,e_2,e_3,e_4 $ w.r.t. the metric) take the following form in the exponential coordinates:

\begin{align*}
e^1&=\partial_{x_1}+\gamma x_3\partial_{x_2}-\gamma x_2\partial_{x_3}-(\frac12a_1x_2^2+\frac12a_2x_3^2-x_4\epsilon)\partial_{x_4},\ \
e^2=\partial_{x_2},\ \ e^3=\partial_{x_3},\ \ e^4= \partial_{x_4}\\
e_1^*&=dx_1, \ \ e_2^*=-\gamma x_3dx_1+dx_2, \ \ 
e_3^*=\gamma x_2dx_1+dx_3,\ \ 
e_4^*=(\frac12a_1x_2^2+\frac12a_2x_3^2-x_4\epsilon)dx_1+dx_4.
\end{align*}
\end{lem}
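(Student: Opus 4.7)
My plan is to realize the coframe $(e_1^*,e_2^*,e_3^*,e_4^*)$ as the $\fc$-component of the pullback of the Maurer--Cartan form of $K$ along the exponential section $\sigma:(x_1,x_2,x_3,x_4)\mapsto \exp(x_1k^1)\exp(x_2k^2+x_3k^3)\exp(x_4k^4)$, and then to dualize in order to read off the frame $e^1,e^2,e^3,e^4$. The one delicate point is the bookkeeping of the composed adjoint actions on $k^1$; everything else reduces to explicit power series that truncate at quadratic order.

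First I would factor $\sigma=A\cdot B\cdot C$ with $A=\exp(x_1k^1)$, $B=\exp(x_2k^2+x_3k^3)$, $C=\exp(x_4k^4)$, and apply the product rule
\[
\sigma^{-1}d\sigma=\Ad(C^{-1})\Ad(B^{-1})(A^{-1}dA)+\Ad(C^{-1})(B^{-1}dB)+C^{-1}dC.
\]
The three diagonal pieces are immediate: $A^{-1}dA=k^1\,dx_1$ and $C^{-1}dC=k^4\,dx_4$ because $A$ and $C$ live in one-parameter subgroups; and since the span of $k^2,k^3,k^4,k^5,k^6$ is a Heisenberg algebra with centre $\langle k^4\rangle$, every bracket among $k^2,\ldots,k^6$ not appearing in the given presentation vanishes, in particular $[k^2,k^3]=0$, so $B=\exp(x_2k^2)\exp(x_3k^3)$ and hence $B^{-1}dB=k^2\,dx_2+k^3\,dx_3$.

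The core calculation is the iterated adjoint action on $k^1$. Expanding $\Ad(\exp Y)=\exp(\ad Y)$ and using the listed brackets, $\ad(k^4)$ sends $k^1\mapsto \epsilon k^4$ and kills $k^2,\ldots,k^6$, while $\ad(k^2)^2k^1=a_1k^4$ and $\ad(k^3)^2k^1=a_2k^4$ land in the centre, so both series terminate after quadratic order. Composing these actions produces
\[
\Ad(C^{-1})\Ad(B^{-1})k^1=k^1-\gamma x_3\,k^2+\gamma x_2\,k^3+\bigl(\tfrac12 a_1 x_2^2+\tfrac12 a_2 x_3^2-\epsilon x_4\bigr)k^4+a_1x_2\,k^5+a_2x_3\,k^6,
\]
while $\Ad(C^{-1})k^2=k^2$ and $\Ad(C^{-1})k^3=k^3$. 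Assembling the three contributions and projecting the $\fk$-valued $1$-form $\sigma^{-1}d\sigma$ onto $\fc=\langle k^1,k^2,k^3,k^4\rangle$ picks out the coefficients of $k^1,k^2,k^3,k^4$, which are precisely the claimed expressions for $e_1^*,e_2^*,e_3^*,e_4^*$. The frame $e^1,\ldots,e^4$ is then recovered by inverting the coframe matrix; since it is upper triangular with ones on the diagonal (all off-diagonal entries sit in the $dx_1$-column), the inversion is immediate and yields the stated vector fields.
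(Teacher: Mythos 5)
Your proposal is correct and follows essentially the same route as the paper: both pull back the Maurer--Cartan form along the exponential section, isolate the term $\Ad(\exp(-x_4k^4)\exp(-x_2k^2-x_3k^3))(k^1\,dx_1)$ plus the Maurer--Cartan form of the Heisenberg part, and read off the coframe from the $\fc$-components (your adjoint computation, terminating at quadratic order because $a_1x_2^2k^4+a_2x_3^2k^4$ is central, matches the stated formulae). The only difference is that the paper first extends the coordinates by $\exp(x_5k^5+x_6k^6)$ on all of $K$ and then restricts to $x_5=x_6=0$, whereas you work on the section directly; this is immaterial, and your more explicit write-out of the iterated $\Ad$-action is a faithful expansion of what the paper leaves implicit.
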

\begin{proof}
We extend the exponential coordinates to $K$ by considering $\ell=\exp(k^1 x_1)\exp(x_2k^2+x_3k^3)\exp(x_4k^4)\exp(x_5k^5+x_6k^6).$
We observe that the Maurer-Cartan form $\omega_K=\ell^{-1}d\ell$ of $K$ can be expressed as
\begin{align*}
\omega_K&= \Ad(\exp(-x_5k^5-x_6k^6)\exp(-x_4k^4)\exp(-x_2k^2-x_3k^3))(k^1 dx_1)+\omega_{heis}
\end{align*}
  in the exponential coordinates, where $\omega_{heis}$ is the Maurer-Cartan form of the Heisenberg part generated by $k^2,k^3,k^4,k^5,k^6$. This directly leads to formulae for $e^1,e^2,e^3,e^4$ and their duals when we restrict ourselves to the exponential coordinates downstairs, i.e., the subset given by $x_5=x_6=0$.
\end{proof}
The homogeneous plane wave metric $g_{\epsilon,a_1,a_2,\gamma}$ is then given in the exponential coordinates using the dual basis $e_1^{*},e_2^{*},e_3^{*},e_4^{*}$ by
\begin{align*}
2e_1^{*}e_4^{*}+(e_2^{*})^2+(e_3^{*})^2&=((\gamma^2+a_1)x_2^2+(\gamma^2+a_2)x_3^2-2\epsilon x_4)dx_1^2-2\gamma x_3 dx_1dx_2+2\gamma x_2 dx_1dx_3
\\ &+2dx_1dx_4+dx_2^2+dx_3^2.
\end{align*}

The isometry from Brinkmann coordinates to exponential coordinates takes for $\epsilon=0$ form
\begin{align*}
x_1&=x_+,\:\: x_2=\cos(\gamma x_+)z_1+\sin(\gamma x_+)z_2,\: \: 
x_3=-\sin(\gamma x_+)z_1+\cos(\gamma x_+)z_2,\: \: x_4=x_-
\end{align*}
and for $\epsilon=1$ takes form
\begin{align*}
x_1&=\ln(x_+),\:\: x_2=\cos(\gamma \ln(x_+))z_1+\sin(\gamma \ln(x_+))z_2,\\
x_3&=-\sin(\gamma \ln(x_+))z_1+\cos(\gamma \ln(x_+))z_2,\:\: x_4=x_+x_-.
\end{align*}

For the realization of the conformal equivalence  $g_{1,a_1,a_2,\gamma} \simeq g_{0,a_1+\frac14,a_2+\frac14,\gamma}$, we consider the following conformal isometry
\begin{align*}
x_+&=\exp(x_+),\:\: x=\exp(\frac{x_+}{2})z_1,\:\: y=\exp(\frac{x_+}{2})z_2,\:\:v=x_--\frac{z_1^2+z_2^2}{4}
\end{align*}
from \cite{HS,AG}. 
For the realization of the isometries  $g_{0,a_1,a_2,\gamma}\simeq g_{0,\lambda^2a_1,\lambda^2a_2,\lambda\gamma}$ and $g_{0,a_1,a_2,\gamma} \simeq g_{0,a_2,a_1,\gamma}$, we consider
\begin{align*}
x_+&=\lambda x_+,\: \: \: x=z_1,\: \:y=z_2,\: \:v=\frac{x_-}{\lambda}, {\rm \ \ \ and \ \ \ }
x_+=-x_+,\: \:x=z_2,\: \:y=z_1,\: \:v=-x_-,
\end{align*}
respectively.

Although there is no simple expression for the Killing fields, we can find the jets of the reducible Killing tensors at the origin using the $\ad$-action, which allows us to characterize the reducible Killing and conformal Killing tensors.

\begin{prop}\label{reducible-prop}
The spaces of reducible conformal Killing tensors have
  \begin{enumerate}
 \item dimension $27$, if $a_1\neq a_2$,

\item dimension $84$, if $a_1= a_2$.
  \end{enumerate}
  
The spaces of reducible Killing tensors (including the metric $g$) have

\begin{enumerate}
\item[(3)] dimension $21$, if $\epsilon=0, a_1\neq a_2$,

\item[(4)] dimension $27$, if $\epsilon=0, a_1= a_2\neq 0$,
  
\item[(5)] dimension $22$, if $\epsilon\neq 0, a_1\neq a_2$, 

\item[(6)]  dimension $28$, if $\epsilon\neq 0, a_1= a_2\neq 0$,
\item[(7)] dimension $50$,  if $a_1= a_2=0$.
  \end{enumerate}
\end{prop}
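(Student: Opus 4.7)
The plan is to realize the reducible (conformal) Killing tensors as the image of a symmetric-product map from the space of (conformal) Killing vector fields, and to compute the dimension of this image by evaluating appropriate jets at the origin $eH$ of $K/H$.

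First, for each parameter regime one enumerates the (conformal) Killing vector fields. The generators $k^1,\dots,k^6$ always give Killing fields; the extra rotation in $\langle k^2,k^3\rangle$ introduces a seventh Killing field in the conformally flat case $a_1=a_2$; and in the flat case $a_1=a_2=0$ the full Poincar\'e algebra of dimension $10$ acts by isometries. For conformal Killing fields one adds at least the homothety arising from the conformal equivalence $g_{1,\cdot,\cdot,\cdot}\simeq g_{0,\cdot,\cdot,\cdot}$, together with any additional conformal symmetries occurring for specific parameter values.

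Second, one uses homogeneity: by the prolongation of the first BGG operator, a (conformal) Killing tensor is determined by a finite-order jet at $eH$. Although the vector fields $k^1,\dots,k^6$ have no simple coordinate formulae in the exponential chart, as noted just before the proposition their jets at $eH$ can be computed purely algebraically from the $\ad$-action on $\fk/\fh$ and its higher-order analogues. From the jets of the $k^i$ one computes the jets of the symmetric products $k^i\odot k^j$, passing to trace-free parts in the conformal case and, in the Killing case, also including the jet of the metric $g$.

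The last step is a rank computation: form a matrix whose columns are these jets and compute its rank in each of the seven regimes, which yields the claimed dimensions $21, 27, 22, 28, 50$ for Killing tensors and $27, 84$ for conformal Killing tensors. The main obstacle is the careful bookkeeping of the linear relations among the $k^i\odot k^j$, which depend subtly on $\epsilon$ and on the equalities $a_1=a_2$ and $a_1=a_2=0$. In particular one must determine whether the metric already lies in the span of the $k^i\odot k^j$ (which accounts for the discrepancy of one between the $\epsilon=0$ and $\epsilon\ne 0$ rows) and how the additional rotational or translational symmetries cut down the rank further. As a sanity check, the flat case $a_1=a_2=0$ matches the classical formula $\tfrac{1}{4}\binom{6}{3}\binom{5}{2}=50$ for the dimension of the space of rank-$2$ Killing tensors on Minkowski space.
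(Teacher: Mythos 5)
Your proposal follows essentially the same route as the paper: the paper likewise computes the jets of the symmetric products $k^ik^j$ at the origin by differentiating the action $\exp(-\ad(x_4k^4))\exp(-\ad(x_2k^2+x_3k^3))\exp(-\ad(x_1k^1))$, determines the rank of the resulting system of jets to get $21$ (respectively $28$ including the trace), and then compares with the jet of the metric to see that $g$ lies in the span of the $k^ik^j$ exactly when $\epsilon=0$, which produces the shifts to $22$ in case (5) and to $27$ in case (1). The only organizational difference is that the paper does not redo this computation in the conformally flat and flat regimes, quoting instead the general BGG theory for cases (2) and (7) and the reference \cite{KO} for cases (4) and (6); your uniform treatment works, but note that it implicitly requires knowing that the enumeration of (conformal) Killing fields is complete in each regime (e.g.\ that exactly one homothety $k^7$ exists beyond $k^1,\dots,k^6$ when $a_1\neq a_2$), a fact the paper also asserts rather than proves.
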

\begin{proof}
The case $a_1=a_2$ is conformally flat, the case $a_1=a_2=0$ is flat, and cases (2), (7) follow from the general theory of BGG operators. The cases (4), (6) were proven in \cite{KO}. 

If $a_1\neq a_2$, then in addition to the Killing fields $k^1,\dots, k^6$, there is exactly one additional conformal Killing field that we denote $k^7$, which is the homothety of the form
$$-x_2\partial_{x_2}-x_3\partial_{x_3}-2x_4\partial_{x_4}$$
in the exponential coordinates. These provide us all of the reducible Killing and conformal Killing tensors that we parametrize by  $u_{ij}k^ik^j, 1\leq i \leq j\leq 6$ and $u_{ij}(k^ik^j)_0, 1\leq i \leq j\leq 7$, respectively. In the frame $e^1,e^2,e^3,e^4$, their linear combinations take form
$$\sum_{p\leq q\leq 4} 2f_{p,q}e^pe^q:=\sum_{i\leq j\leq 6}2u_{ij}k^ik^j
{\rm \ \ \ and \ \ \ } 
(\sum_{p\leq q\leq 4} 2f_{p,q}e^pe^q)_0:=\sum_{i\leq j\leq 7}2u_{ij}(k^ik^j)_0,$$
for smooth functions $f_{p,q}=f_{p,q}(x_1,x_2,x_3,x_4)$, respectively.

To obtain the claims (1),(3) and (5), we compare the jets of the functions $f_{p,q}$ to determine, which $u_{ij}$ lead to linearly independent Killing and conformal Killing tensors. To obtain the jets of the functions $f_{p,q}$ at the origin $o=(0,0,0,0)$, let us differentiate the action $\exp(-\ad(x_4k^4))\exp(-\ad(x_2k^2+x_3k^3))\exp(-\ad(x_1k^1))$ induced on $k^ik^j$. This leads to
$$f(o)= \left[ \begin {smallmatrix} 2{ u_{11}}&{ u_{12}}&{ u_{13}}&{ u_{14}
}\\ *&2{ u_{22}}&{ u_{23}}&{ u_{24}}
\\ *&*&2{ u_{33}}&{ u_{34}}
\\ *&*&*&2{ u_{44}}
\end {smallmatrix} \right] 
,\ \  \partial_{x_1}f(o)=  \left[ \begin {smallmatrix} 0&{ u_{13}}{ \gamma}-{ u_{15}}&-{ 
u_{12}}{ \gamma}-{ u_{16}}&{ u_{14}}\epsilon\\ *&2{ u_{23}}{ \gamma}-2{ u_{25}}&2\gamma( u_{33}-u_{22})
-{ u_{26}}-{ u_{35}}&{
 u_{24}}\epsilon+{ u_{34}}{ \gamma}-{ u_{45}}\\ 
*&*&-2{ u_{23}}{ \gamma}-2{ u_{36}}&{
 u_{34}}\epsilon-{ u_{24}}{ \gamma}-{ u_{46}}\\ 
*&*&*&4{ u_{44}}
\epsilon\end {smallmatrix} \right],
$$
$$\partial_{x_2}f(o)= \left[ \begin {smallmatrix} 0&-{ u_{17}}&2{ u_{11}}{ \gamma}&{
 u_{15}} \\ *&-2{ u_{27}}&{ u_{12}}{ 
\gamma}-{ u_{37}}&{ u_{25}}-{ u_{47}}\\ *&*&2{ u_{13}}{ \gamma}&
{ u_{14}}{ \gamma}+{ u_{35}}\\ *&*&*&2{ u_{45}}
\end {smallmatrix} \right], \ \ 
\partial_{x_3}f(o)=\left[ \begin {smallmatrix} 0&-2{ u_{11}}{ \gamma}&-{ u_{17}}&{
 u_{16}}\\ *&-2{ u_{12}}{
 \gamma}&-{ u_{13}}{ \gamma}-{ u_{27}}&{ u_{26}}-{ u_{14}}{ \gamma}
\\ *&*&-2{ 
u_{37}}&{ u_{36}}-{ u_{47}}\\ *&*&*&2{ u_{46}}\end {smallmatrix}
 \right], 
$$
$$2\partial_{x_1x_1}f(o)=
 \Bigg[ \begin {smallmatrix} 0&-2{ u_{16}}{ \gamma}+{ u_{12}}
 \left( -{{ \gamma}}^{2}+{ a_1} \right) -{ u_{15}}\epsilon &\cdots\\ 
*&4{ u_{55}}+{ u_{22}} \left( -4{{
 \gamma}}^{2}+4{ a_1} \right) -2{ u_{25}}\epsilon-4{ u_{35}}
{ \gamma}+4{ u_{33}}{{ \gamma}}^{2}-4{ u_{26}}{ \gamma}&\cdots\\ 
*&*&\cdots\\ *&*&\cdots
\end{smallmatrix} 
$$
$$\begin{smallmatrix}
\cdots & { u_{13}} \left( -{{ \gamma}}^{2}+{ a_2} \right) -{ u_{16}}\epsilon+2
{ u_{15}}{ \gamma}&{ u_{14}}{\epsilon}^{2}\\ \cdots &
{
 u_{23}} \left( -4{{ \gamma}}^{2}+{ a_1}+{ a_2} \right) +4{
 u_{25}}{ \gamma}-{ u_{35}}\epsilon+2{ u_{56}}-4{ u_{36}}{
 \gamma}-{ u_{26}}\epsilon&{ u_{24}} \left( {\epsilon}^{2}-{{ 
\gamma}}^{2}+{ a_1} \right) -2{ u_{46}}{ \gamma}+2{ u_{34}}{
 \gamma}\epsilon-3{ u_{45}}\epsilon\\ \cdots &
4{ u_{22}}{{ 
\gamma}}^{2}+4{ u_{35}}{ \gamma}+4{ u_{66}}+{ u_{33}} \left( -4
{{ \gamma}}^{2}+4{ a_2} \right) -2{ u_{36}}\epsilon+4{ 
u_{26}}{ \gamma}&-2{ u_{24}}{ \gamma}\epsilon-3{ u_{46}}
\epsilon+{ u_{34}} \left( {\epsilon}^{2}-{{ \gamma}}^{2}+{ a_2}
 \right) +2{ u_{45}}{ \gamma}\\ \cdots & *&8{ u_{44}}{\epsilon}^{2}
\end{smallmatrix} \Bigg],
$$
$$2\partial_{x_2x_2}f(o)=
\left[ \begin {smallmatrix} 0&0&0&2{ u_{11}}{ a_1}
\\ *&4{ u_{77}}&-2{ u_{17}}{ \gamma}&{ u_{12}}
{ a_1}-2{ u_{57}}\\ *&*&4{ u_{11}}{{ \gamma}}^{2}&{ u_{13}}{ a_1}+2{ u_{15}}{ 
\gamma}\\ *&*&*&2{ u_{14}}
{ a_1}+4{ u_{55}}\end {smallmatrix} \right], \ \  
2\partial_{x_3x_3}f(o)= \left[ \begin {smallmatrix} 0&0&0&2{ u_{11}}{ a_2}
\\ *&4{ u_{11}}{{ \gamma}}^{2}&2{ u_{17}}{
 \gamma}&{ u_{12}}{ a_2}-2{ u_{16}}{ \gamma}
\\ *&*&4{ u_{77}}&{ u_{13}}
{ a_2}-2{ u_{67}}\\ *&*&*&2{ u_{14}}{ a_2}+4{ u_{66}}\end {smallmatrix} \right].$$

These jets provide enough conditions to determine the linear independence of $u_{ij}$, which gives the dimensions $21$ for Killing tensors and $28$ for conformal Killing tensors (including the trace). To obtain the dimensions in the statement, we need to compare these jets with the jets of the metric. From this comparison, we conclude that in the case $\epsilon=0$, $$g=2k^1 k^4+k^2 k^2+k^3 k^3+2\gamma k^2k^6-2\gamma k^3k^5-a_1 k^5k^5-a_2 k^6 k^6,$$
and in the case $\epsilon=1$, $$2k^1 k^4+k^2 k^2+k^3 k^3-k^2k^5+2\gamma k^2k^6-2\gamma k^3k^5-k^3k^6-k^4k^7-a_1 k^5k^5-a_2 k^6 k^6$$
is a functional multiple of $g.$ Therefore, in the case (5), the metric provides additional reducible Killing tensor, while in the case (1), there is one less reducible conformal Killing tensor.
\end{proof}

\section{BGG operators on homogeneous plane waves}\label{sec-bgg-how}
Let us review the method for finding solutions of the first BGG operators on homogeneous conformal and projective geometries, \cite{bgg-my} and \cite{bgg-conf}. The solutions of the first BGG operators on the homogeneous space $K/H$ can be described as sections of the homogeneous vector bundle $K \times_H \mathbb{X}$, where $\mathbb{X}=S^2\fk/\fh$ in the case of Killing tensors, and $\mathbb{X}=S^2_0\fk/\fh$ in the case of conformal Killing tensors. 
With a chosen complement $\fc=\langle k^1,k^2,k^3,k^4 \rangle$ of $\fh$ in $\fk$, the solutions in the exponential coordinates can be described by functions $\fc \to \mathbb{X}$, where $\mathbb{X}=S^2\fc$ or $\mathbb{X}=S^2_0\fc$. These functions are the coordinate functions in the frame of $K\times_H \mathbb{X}$ induced by the frame $e^1,e^2,e^3,e^4$ in Lemma \ref{c-frame}. The main results of \cite[Proposition 5.1]{bgg-my} and \cite[Theorem 1.1]{bgg-conf} state that the solutions are completely described by a pair $(\Phi|_{\fc},\pi)$, where
\begin{enumerate}
    \item $\Phi|_{\fc}$ is restriction to $\fc$ of a representation $\Phi:\fk \to \mathfrak{gl}(\mathbb{S})$ of the Lie algebra $\fk$ on the vector space $\mathbb{S}$ (representing the space of solutions), and 
    \item $\pi:\mathbb{S}\to \mathbb{X}$ is a linear map.
\end{enumerate}
The functions $\fc \to \mathbb{X}$ corresponding to $v\in \mathbb{S}$ are then given by
\[(x_1,x_2,x_3,x_4)\mapsto \pi(\exp(-\Phi(x_4k^4))\exp(-\Phi(x_2k^2+x_3k^3))\exp(-\Phi(x_1k^1))v).\]

In the following subsections, we summarize how to compute explicitly the pair $(\Phi|_{\fc},\pi)$:

\subsection*{(1)}
The first step is to find the linear maps $\alpha:\fk\to \mathfrak{so}(2,4)$ and $\beta:\fk\to \mathfrak{sl}(5,\mathbb{R})$ that form the basic components of the representations $\Phi$. These depend only on the geometries in question and are the same for all BGG operators. For homogeneous plane waves, we find $\alpha,\beta$ in the following two Propositions \ref{cnormal} and \ref{pnormal}.

\begin{prop}\label{cnormal}
The map $\alpha:\fk\to \mathfrak{so}(2,4)$  for the homogeneous plane wave $g_{\epsilon,a_1,a_2,\gamma}$ is
\begin{align*}
\alpha(x_ik^i)=\left[ 
\begin {smallmatrix} 
0& \frac12 ( a_1+a_2) x_1&0&0&0&0\\ 
x_1&\epsilon x_1&0&0&0&0\\ 
x_2&-x_5&0&-\gamma x_1&0&0\\ 
x_3&-x_6&\gamma x_1&0&0&0\\ 
x_4&0& x_5&x_6&-\epsilon x_1&- \frac12( a_1+a_2) x_1\\
 0&-x_4&-x_2&-x_3&-x_1&0
\end {smallmatrix} \right],
\end{align*}
where $\mathfrak{so}(2,4)$ preserves the scalar product $\langle u,v \rangle=u_1v_6+v_6u_1+u_2v_5+v_5u_2+u_3v_3+u_4v_4.$
\end{prop}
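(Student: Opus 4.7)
The plan is to verify that the displayed map $\alpha$ is (a) $\mathfrak{so}(2,4)$-valued with respect to the bilinear form $\langle\cdot,\cdot\rangle$, (b) a Lie algebra homomorphism reproducing the brackets of $\fk$ given in Section 2, and (c) the \emph{normal} conformal Cartan connection corresponding to $g_{\epsilon,a_1,a_2,\gamma}$, so that it meets the hypothesis used by \cite{bgg-conf} to build $\Phi$.

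First I would read off the six generator matrices $\alpha(k^1),\dots,\alpha(k^6)$ by setting one of the coordinates $x_i$ to $1$ and the rest to $0$. For (a), with $J$ the Gram matrix of $\langle\cdot,\cdot\rangle$ (with $1$'s in positions $(1,6),(6,1),(2,5),(5,2),(3,3),(4,4)$), I would check that $\alpha(k^i)^T J + J\alpha(k^i)=0$ for each $i$; this is a routine entry-by-entry check. For (b), I would compute the matrix commutators $[\alpha(k^i),\alpha(k^j)]$ on the six pairs listed in the bracket relations of Section 2 and match them against $\alpha$ applied to the right-hand sides $\gamma k^3+a_1 k^5$, $-\gamma k^2+a_2 k^6$, $-\epsilon k^4$, $k^2-\epsilon k^5+\gamma k^6$, $k^3-\gamma k^5-\epsilon k^6$, $-k^4$, $-k^4$; here the placement of $\gamma$, $\epsilon$, and the combination $\tfrac12(a_1+a_2)$ in the upper $1\times 2$ and lower $2\times 1$ blocks is precisely what makes these brackets close correctly.

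The main obstacle is (c), the normality check. Recall that the conformal Cartan connection is normal iff its curvature lies in $\ker\partial^\ast$, where $\partial^\ast\colon\Lambda^2(\fg/\fp)^\ast\otimes\fg\to(\fg/\fp)^\ast\otimes\fg$ is the Kostant codifferential with $\fg=\mathfrak{so}(2,4)$ and $\fp$ the conformal parabolic stabilising the first isotropic line. I would compute the $\fg$-valued $2$-form $\kappa(X,Y)=[\alpha(X),\alpha(Y)]-\alpha([X,Y])$ on $\fc\times\fc$ using the matrices above and the brackets of $\fk$. Since $\alpha|_\fc$ evaluated on the generators $k^1,k^2,k^3,k^4$ provides the identification of $\fc$ with $\fg_{-1}$ inducing the scalar products $2e_1^\ast e_4^\ast+(e_2^\ast)^2+(e_3^\ast)^2$ of Lemma 2.1, the only components of $\kappa$ that can be non-zero are those valued in the Levi and positive parts of $\fg$, and the normality reduces to checking the standard trace-identities expressing that the Weyl curvature of $g_{\epsilon,a_1,a_2,\gamma}$ is trace-free and that the Schouten tensor appears in the correct slot. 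A second sanity check is that the same metric $g_{\epsilon,a_1,a_2,\gamma}$ that Lemma 2.1 produces from the coframe $e_i^\ast$ is recovered from the $\fg_{-1}$-components of $\alpha|_\fc$ together with $\langle\cdot,\cdot\rangle$.

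Finally, since a normal conformal Cartan connection on a conformal manifold is unique up to gauge, and since the map $\alpha$ is tailored so that $\alpha|_\fh$ lands in $\fp$ (the upper $1\times 2$, lower $2\times 1$, and diagonal $\epsilon x_1$-entries realise this), the verification of (a)–(c) identifies $\alpha$ with the pull-back of the normal conformal Cartan connection along the section of $K\to K/H$ picked out by our exponential coordinates. The main technical difficulty is therefore bookkeeping: the parameters $\epsilon$, $a_1$, $a_2$, $\gamma$ enter several brackets simultaneously, and one must keep careful track of the grading of $\mathfrak{so}(2,4)$ to confirm that the curvature produced by $\alpha$ has no component in the $\fp_+$-part forbidden by $\partial^\ast$.
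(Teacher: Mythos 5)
Your overall strategy --- verify that the displayed $\alpha$ satisfies the defining properties of the normalized extension and then invoke uniqueness --- is legitimate and close in spirit to the paper's proof, which fixes the first column and last row by the choice of basis of $\fc$, reads off $\alpha(k^5),\alpha(k^6)$ from the Lie brackets, and pins down the remaining entries by imposing the normalization conditions of \cite[Proposition 2.1]{bgg-conf} on the tensor $\kappa(X,Y)=[\alpha(X),\alpha(Y)]-\alpha([X,Y])$. Your steps (a) and (c), together with the closing uniqueness remark, are essentially that argument run in reverse.

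Step (b), however, contains a genuine error: $\alpha$ is \emph{not} a Lie algebra homomorphism, and the brackets you propose to "match" do not all close. Writing $E_{ij}$ for elementary matrices, one reads off $\alpha(k^1)=\tfrac12(a_1+a_2)E_{12}+E_{21}+\epsilon E_{22}-\gamma E_{34}+\gamma E_{43}-\epsilon E_{55}-\tfrac12(a_1+a_2)E_{56}-E_{65}$, $\alpha(k^2)=E_{31}-E_{63}$, $\alpha(k^5)=-E_{32}+E_{53}$, and a direct computation gives
\[
[\alpha(k^1),\alpha(k^2)]-\alpha([k^1,k^2])=\tfrac12(a_1-a_2)\,(E_{32}-E_{53}),
\]
which is nonzero exactly when $a_1\neq a_2$, i.e.\ in the conformally non-flat case that is the paper's main object (the same happens for $[k^1,k^3]$). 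This is not a defect of the stated formula: the failure of $\alpha$ to be a homomorphism on $\fc\times\fc$ \emph{is} the curvature of the canonical Cartan connection (here a $\fg_0$-valued, Weyl-type component), and the defining requirement is only that $\kappa$ be horizontal --- vanish whenever one argument lies in $\fh$, which is what actually determines $\alpha(k^5),\alpha(k^6)$ and is the correct check on the brackets $[k^1,k^5],[k^1,k^6],[k^2,k^5],[k^3,k^6]$ --- and that $\kappa$ be $\partial^*$-closed. Your step (c) already computes $\kappa$ on $\fc\times\fc$ and thus tacitly contradicts (b); as written, a reader following (b) on the pairs $(k^1,k^2)$ and $(k^1,k^3)$ would wrongly conclude that the displayed $\alpha$ is incorrect. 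The repair is to restrict the homomorphism check to $\fh\times\fk$ and fold all $\fc\times\fc$ brackets into the normality computation of (c).
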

\begin{proof}
Following \cite[Proposition 2.1.]{bgg-conf}, the first column and the last row are determined by our choice of the basis of $\fc$. Then we use the Lie brackets of $\fk$ to compute the image of $x_5,x_6$. The remaining parts of $\alpha$ are computed by considering a general linear map $\fc$ to the rest of $\mathfrak{so}(2,4)$ and imposing the normalization conditions from \cite[Proposition 2.1.]{bgg-conf} on the tensor $(X,Y)\mapsto [\alpha(X),\alpha(Y)]-\alpha([X,Y]), X,Y\in\fk$. This uniquely determines $\alpha$.
\end{proof}

\begin{prop}\label{pnormal}
The map $\beta:\fk\to \mathfrak{sl}(5,\mathbb{R})$ for the homogeneous plane wave $g_{\epsilon,a_1,a_2,\gamma}$ is
\begin{align*}
\beta_{\epsilon,a_1,a_2,\gamma}(x_ik^i)=\left[ 
\begin {smallmatrix} 
0& \frac13 ( a_1+a_2) x_1&0&0&0\\ 
x_1&\epsilon x_1&0&0&0\\ 
x_2&-x_5&0&-\gamma x_1&0\\ 
x_3&-x_6&\gamma x_1&0&0\\ 
x_4&0& x_5&x_6&-\epsilon x_1
\end {smallmatrix} \right] 
\end{align*}
\end{prop}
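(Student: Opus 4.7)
The plan is to mirror the three-step strategy used in the proof of Proposition \ref{cnormal}, substituting the projective BGG normalization of \cite{bgg-my} for its conformal counterpart \cite{bgg-conf}. The crucial structural difference is that $\beta$ lands in $\mathfrak{sl}(5,\mathbb{R})$ rather than $\mathfrak{so}(2,4)$, reflecting the absence of a metric-compatibility constraint and the fact that $n+1=5$ for $n=\dim M=4$. In particular, the normalization is phrased as a partial-trace condition on the curvature (the projective analogue of ``Ricci-flatness'') rather than as a skew-symmetry condition with respect to a bilinear form.

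First, I would fix the first column: the entries $(x_1,x_2,x_3,x_4)^T$ in rows $2$ through $5$ arise tautologically from the soldering data identifying $\fc$ with the model complement of the projective parabolic in $\mathfrak{sl}(5,\mathbb{R})$, just as in the conformal case. Second, I would determine the images of the generators of $\fh=\langle k^5,k^6\rangle$ by imposing compatibility of $\beta$ with the Lie brackets on $\fh$-equivariant components; the relations $[k^1,k^5]=k^2-\epsilon k^5+\gamma k^6$, $[k^1,k^6]=k^3-\gamma k^5-\epsilon k^6$, and $[k^2,k^5]=[k^3,k^6]=-k^4$ force the $\pm x_5,\pm x_6$ entries that appear in the displayed matrix.

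With those entries fixed, the remaining freedom concentrates in the top row of $\beta(k^1)$ and in the entries proportional to $\epsilon$ and $\gamma$. I would pin them down by writing a general ansatz for the unknown components and imposing the projective normality conditions of \cite{bgg-my} on the curvature
\[
(X,Y)\longmapsto [\beta(X),\beta(Y)]-\beta([X,Y]), \qquad X,Y\in\fk,
\]
namely the vanishing of the appropriate partial traces. This reduces the problem to a finite linear system whose unique solution must reproduce the stated matrix; the coefficient $\tfrac{1}{3}$ in position $(1,2)$ then materialises as $\tfrac{1}{n-1}$ with $n=4$, which is precisely the coefficient in the projective Schouten tensor.

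The main obstacle will be the uniqueness step: one must verify that the linear system coming from the projective trace normalisation has full rank on the space of unknown matrix entries, so that no residual freedom survives in the entries proportional to $\epsilon$ and $\gamma$. This is analogous to the uniqueness argument at the end of the proof of Proposition \ref{cnormal}, but the numerology differs because of the smaller matrix size and because a symmetric-tensor trace replaces an antisymmetric pairing, so a careful rederivation rather than a direct appeal to the conformal computation is required.
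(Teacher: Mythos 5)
Your proposal is correct and follows essentially the same route as the paper: the first column is tautological from the choice of basis of $\fc$, the $x_5,x_6$ entries come from the Lie brackets, and the rest (in particular the first row, which is the projective Schouten tensor with its characteristic factor $\tfrac{1}{n-1}=\tfrac13$) is determined uniquely by imposing the projective normalization conditions of \cite[Section 2.1]{bgg-my} on the curvature $(X,Y)\mapsto [\beta(X),\beta(Y)]-\beta([X,Y])$. The paper's proof is just a terser statement of exactly this argument.
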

\begin{proof}
Following  \cite[Section 2.1]{bgg-my}, it remains to determine the projective Schouten tensor, i.e., the first row of the image of $\beta$. The remaining part of $\beta$ is calculated by considering a general linear map $\fc$ to the rest of $\mathfrak{sl}(5,\mathbb{R})$ and imposing the normalization conditions \cite[Section 2.1]{bgg-my} on the tensor $(X,Y)\mapsto [\beta(X),\beta(Y)]-\beta([X,Y]),X,Y\in\fk$. This uniquely determines $\beta$.
\end{proof}

\subsection*{(2)}\label{ss-sec}

Another component of $\Phi$ is a representation $\rho: \mathfrak{so}(2,4)\to \mathfrak{gl}(\mathbb{V})$ or $\rho: \mathfrak{sl}(5,\mathbb{R})\to \mathfrak{gl}(\mathbb{V})$, respectively, associated with the particular BGG operator.
For Killing and conformal Killing tensors, the representations $\rho$ are known, and we describe them in Sections \ref{sec-CKT}, \ref{sec-fKT}, and \ref{sec-KT}, in detail. The second step is then to compute the remaining part $\Psi: \fc\to \mathfrak{gl}(\mathbb{V})$ so that
$$\Phi:=\rho\circ \alpha+\Psi \rm{\ \ \ or \ \ \ } \Phi:=\rho\circ \beta+\Psi,$$
respectively.
For the calculation of $\Psi$, we follow the algorithm in \cite[Section 3.3]{bgg-my} and \cite[Section 3.3]{bgg-conf}. However, these algorithms are too technical to present them here in detail. Moreover, to verify that we obtained the correct result, it is sufficient, according to \cite{new-norm}, to check that $\partial^* R^{\Phi}(s)=0$ for all $s\in \mathbb{V}$, where $ R^{\Phi}(X,Y)=[\Phi(X),\Phi(Y)]-\Phi([X,Y])$,   $X,Y\in \fk$ is the curvature form of $\Phi$ and $\partial^*$ is the Kostant's codifferential.

\subsection*{(3)}\label{ts-cec}

The third step is to compute the maximal $\Phi$-invariant subspace $\mathbb{S}$ annihilated by the curvature $R^{\Phi}$ of $\Phi$. This is an iterative process that starts with the subspace $\mathbb{S}^0\subset \mathbb{V}$ annihilated by the curvature $R^{\Phi}$. Then we continue iteratively to determine the subspaces $\Phi(\fk)\mathbb{S}^{i+1}\subset \mathbb{S}^i$ until $\mathbb{S}^{i+1}=\mathbb{S}^i=\mathbb{S}$ holds after a finite number of steps. If we disregard the parameters, then the iterative process involves only solving linear equations. This can be done with an arbitrary computer algebra system; we use the computer algebra system Maple, and these computations can be found in the Maple file \cite{bgg-comp} accompanying the arXiv submission. This provides us with the spaces $\mathbb{S}^{gen}$ of generic solutions given in Propositions \ref{generic-CKT}, \ref{generic-fKT}, and \ref{generic-KT}. Then, depending on the values of parameters, the linear equations change, and we investigate for which values of parameters there are additional solutions. The generic solutions allow us to search for these solutions in complements $C$ to the subspaces of the generic solutions. We present the results in Sections \ref{sec-CKT}, \ref{sec-fKT}, and \ref{sec-KT}.

\section{Conformal Killing tensors - Proof of Theorem \ref{main-CKT}}\label{sec-CKT}

We need to consider here two special cases. If $a_1=a_2$, then we have the conformally flat case, and there are $84$ reducible conformal Killing tensors. If $a_1\neq a_2$, then we can assume $\epsilon=0$ without loss of generality. The following proposition provides the formulae for the irreducible conformal Killing tensors from Theorem \ref{main-CKT}. It is a direct computation to check that these tensors are conformal Killing tensors. To show that they are irreducible and that we have all of them, we show here that they come from a complement of the space of the reducible conformal Killing tensors we described in Proposition \ref{reducible-prop}. 

\begin{prop}\label{expicit-CKT}
The homogeneous plane waves $g_{0,\pm\frac{8}{3},\pm\frac{2}{3},0}$ have nine-dimensional spaces of irreducible conformal Killing tensors as follows, where 
\begin{itemize}
\item ${\bf s}(q)=\sin(q\sqrt{6}x_+)$, ${\bf c}(q)=\cos(q\sqrt{6}x_+)$  in the case of $-$, and 
\item ${\bf s}(q)=\sinh(q\sqrt{6}x_+)$, ${\bf c}(q)=\cosh(q\sqrt{6}x_+)$ in the case of of $+$:
\end{itemize}

\begin{gather*}
2\big(c_5 {\bf s}( {\scriptstyle \frac{4}{3}} )+ c_6 {\bf c}({\scriptstyle \frac{4}{3}})\big)
\boldsymbol{\partial}_{x_+}^2
+{\scriptstyle \frac{8\sqrt{6}}{3}}  z_1 \big(c_5{\bf c}({\scriptstyle \frac{4}{3}}) \pm c_6{\bf s}({\scriptstyle \frac{4}{3}} ) \big)
\boldsymbol{\partial}_{x_+}\boldsymbol{\partial}_{z_1}
+\big({\scriptstyle \frac{4\sqrt{6}}{3}}z_2( c_5 {\bf c}({\scriptstyle \frac{4}{3}})   \pm c_6{\bf s}({\scriptstyle \frac{4}{3}} )  )
\\
+2 z_1(      c_8{\bf c}({\scriptstyle \frac{1}{3}})+c_7{\bf s}({\scriptstyle \frac{1}{3}}))
+2z_2 (c_9 {\bf c}({\scriptstyle \frac{2}{3}})  + c_1{\bf s}({\scriptstyle \frac{2}{3}})  )
+2 (c_2 {\bf s}(1)+ c_3 {\bf c}(1))\big)
\boldsymbol{\partial}_{x_+}\boldsymbol{\partial}_{z_2}
\\
+\big( 
 \mp {\scriptstyle \frac{32}{3}} z_1^2  (c_6 {\bf c}({\scriptstyle \frac{4}{3}}) + c_5{\bf s}({\scriptstyle \frac{4}{3}}))+{\scriptstyle \frac{2\sqrt{6}}{3}}z_2(    c_2{\bf c}(1)  
\pm c_3 {\bf s}(1)   )
\big)
\boldsymbol{\partial}_{x_+}\boldsymbol{\partial}_{x_-}
+\big(-
 {\scriptstyle \frac{4\sqrt{6}}{3}}x_- (c_5 {\bf c}({\scriptstyle \frac{4}{3}})\pm c_6 {\bf s}({\scriptstyle \frac{4}{3}})   )
 \\
 \pm{\scriptstyle \frac{8}{3}}  z_1^2(c_6 {\bf c}({\scriptstyle \frac{4}{3}})
 + c_5{\bf s}({\scriptstyle \frac{4}{3}}) )
- {\scriptstyle \frac{ \sqrt{6}}{3}}z_1 z_2( c_7{\bf c}({\scriptstyle \frac{1}{3}})  \pm c_8 {\bf s}({\scriptstyle \frac{1}{3}})  )
\big)
\boldsymbol{\partial}_{z_1}^2
+\big( {\scriptstyle \frac{2\sqrt{6}}{3}} ( z_1^2- z_2^2)(c_7{\bf c}({\scriptstyle \frac{1}{3}}) 
\pm  c_8{\bf s}({\scriptstyle \frac{1}{3}}))
\\
-2 x_- (c_8 {\bf c}({\scriptstyle \frac{1}{3}})
 + c_7  {\bf s}({\scriptstyle \frac{1}{3}}))
+
{\scriptstyle \frac{4 \sqrt{6}}{3}}z_1 z_2(\pm c_9{\bf s}({\scriptstyle \frac{2}{3}})  + c_1 {\bf c}({\scriptstyle \frac{2}{3}})  )
\pm {\scriptstyle \frac{16}{3}}  z_1 z_2 (c_6 {\bf c}({\scriptstyle \frac{4}{3}}) 
+  c_5 {\bf s}({\scriptstyle \frac{4}{3}}))
\\
+{\scriptstyle \frac{4\sqrt{6}}{3}}z_1( c_2 {\bf c}(1)   \pm c_3 {\bf s}(1)   )
+2 c_4 z_2\big)
\boldsymbol{\partial}_{z_1}\boldsymbol{\partial}_{z_2}
+\big(-{\scriptstyle \frac{2\sqrt{6}}{3}} z_2 x_- (\pm c_8 {\bf s}({\scriptstyle \frac{1}{3}}) + c_7{\bf c}({\scriptstyle \frac{1}{3}})) 
\\
\pm{\scriptstyle \frac{4}{3}} z_2  z_1^2(c_8 {\bf c}({\scriptstyle \frac{1}{3}}) 
+ c_7{\bf s}({\scriptstyle \frac{1}{3}})) 
-{\scriptstyle \frac{32\sqrt{6}}{9}}  z_1^3 (\pm c_5 {\bf c}({\scriptstyle \frac{4}{3}})
+    c_6 {\bf s}({\scriptstyle \frac{4}{3}}))
\pm {\scriptstyle \frac{32}{3}} z_1x_-(c_5 {\bf s}({\scriptstyle \frac{4}{3}}) +  c_6  {\bf c}({\scriptstyle \frac{4}{3}}))
\\
\pm {\scriptstyle \frac{8}{3}} z_1 z_2 ( c_3 {\bf c}(1)+ c_2 {\bf s}(1))\big)
\boldsymbol{\partial}_{z_1}\boldsymbol{\partial}_{x_-}
+\big(-  {\scriptstyle \frac{2\sqrt{6}}{3}} (z_1^2-z_2^2)(\pm c_9{\bf s}({\scriptstyle \frac{2}{3}}) + c_1 {\bf c}( {\scriptstyle \frac{2}{3}})) - 2  x_-( c_9{\bf c}( {\scriptstyle \frac{2}{3}})  +  c_1{\bf s}({\scriptstyle \frac{2}{3}}) )
\\
\pm {\scriptstyle \frac{8}{3}}  z_2^2(c_6{\bf c}({\scriptstyle \frac{4}{3}}) 
 + c_5{\bf s}({\scriptstyle \frac{4}{3}})) 
+ z_1 z_2 ( c_7 {\bf c}({\scriptstyle \frac{1}{3}})  \pm c_8 {\bf s}({\scriptstyle \frac{1}{3}})   )
+{\scriptstyle \frac{4  \sqrt{6}}{3}} z_2( c_2 {\bf c}(1)  \pm  c_3 {\bf s}(1) )
-2 c_4 z_1\big)
\boldsymbol{\partial}_{z_2}^2
\\
+\big( {\scriptstyle \frac{4\sqrt{6}}{3}} z_2x_-  (c_1{\bf c}({\scriptstyle \frac{2}{3}})  \pm  c_9 {\bf s}({\scriptstyle \frac{2}{3}}))
\mp {\scriptstyle \frac{4}{3}} z_2(2  z_1^2+ z_2^2)(c_9{\bf c}({\scriptstyle \frac{2}{3}})+ c_1{\bf s}({\scriptstyle \frac{2}{3}}) )
+   {\scriptstyle \frac{4 \sqrt{6}}{3}} z_1 x_- (c_7  {\bf c}({\scriptstyle \frac{1}{3}}) \pm c_8  {\bf s}({\scriptstyle \frac{1}{3}})) 
\\
-{\scriptstyle \frac{32 \sqrt{6} }{9}}  z_2  (z_1^2+{\scriptstyle \frac{1}{4}} z_2^2)  (\pm c_5{\bf c}({\scriptstyle \frac{4}{3}})  + c_6 {\bf s}({\scriptstyle \frac{4}{3}}) )
\mp {\scriptstyle \frac{16}{3}} (z_1^2+{\scriptstyle \frac{1}{2}} z_2^2) (c_3 {\bf c}(1)+ c_2 {\bf s}(1))
\\
\mp {\scriptstyle \frac{8}{3}}  z_1^3 (c_8{\bf c}({\scriptstyle \frac{1}{3}})
   + c_7  {\bf s}({\scriptstyle \frac{1}{3}}))  \big)
\boldsymbol{\partial}_{z_2}\boldsymbol{\partial}_{x_-}
+\big(-  {\scriptstyle \frac{\sqrt{4}}{9}}  z_1^2 z_2^2  (\pm c_1 {\bf c}({\scriptstyle \frac{2}{3}})   + c_9 {\bf s}({\scriptstyle \frac{2}{3}} )) \mp
{\scriptstyle \frac{4}{3}}  x_-  z_2^2 (c_9{\bf c}({\scriptstyle \frac{2}{3}})  
+  c_1 {\bf s}({\scriptstyle \frac{2}{3}}) )
\\
-  {\scriptstyle \frac{ 32\sqrt{6}}{9}} x_-  z_1^2 (\pm c_5
 {\bf c}({\scriptstyle \frac{4}{3}})   +
 c_6 
 {\bf s}({\scriptstyle \frac{4}{3}}))
 +   {\scriptstyle \frac{64}{9}} z_1^4 (c_6 {\bf c}({\scriptstyle \frac{4}{3}}) 
+ c_5  {\bf s}({\scriptstyle \frac{4}{3}}) )
+{\scriptstyle \frac{2\sqrt{6}}{9}} z_1 z_2^3 (\pm c_7 {\bf c}({\scriptstyle \frac{1}{3}})
+ c_8 {\bf s}({\scriptstyle \frac{1}{3}})) 
\\
\pm {\scriptstyle \frac{8}{3}} z_1  z_2x_-(c_7 {\bf s}({\scriptstyle \frac{1}{3}})  +c_8 {\bf c}({\scriptstyle \frac{1}{3}}))
-{\scriptstyle \frac{16 \sqrt{6} }{9}} z_1^2z_2 ( \pm c_2  {\bf c}(1)
+c_3  {\bf s}(1) )\mp{\scriptstyle \frac{4}{3}} c_4 z_1 z_2^2\big)
\boldsymbol{\partial}_{x_-}^2
\\
+({\scriptstyle \frac{\sqrt{6}}{6}}(z_1^2-z_2^2)(c_1{\bf c}({\scriptstyle \frac{2}{3}}) \pm c_9{\bf s}({\scriptstyle \frac{2}{3}})) 
+{\scriptstyle \frac{\sqrt{6}}{3}}x_-(c_5{\bf c}({\scriptstyle \frac{4}{3}})\pm c_6{\bf s}({\scriptstyle \frac{4}{3}}))
\pm ({\scriptstyle \frac{2}{3}}z_1^2-z_2^2)(c_6{\bf c}({\scriptstyle \frac{4}{3}})+ c_5{\bf s}({\scriptstyle \frac{4}{3}})) \\
+{\scriptstyle \frac12 } x_-(c_9 {\bf c}({\scriptstyle \frac{2}{3}})- c_1 {\bf s}({\scriptstyle \frac{2}{3}}))
-{\scriptstyle \frac{\sqrt{6}}{6}} z_1  z_2(c_7 {\bf c}({\scriptstyle \frac{1}{3}})
\pm c_8  {\bf s}({\scriptstyle \frac{1}{3}}))
-{\scriptstyle \frac{\sqrt{6}}{2}}z_2( c_2 {\bf c}(1)  \pm c_3 {\bf s}(1))+{\scriptstyle \frac12} c_4  z_1)g_{0,\pm\frac{8}{3},\pm\frac{2}{3},0}^{-1}
\end{gather*}

\newpage
The homogeneous plane wave $g_{0,-1,3 ,1}$ has a two-dimensional space of irreducible conformal Killing tensors as follows, where ${\bf c}(q)=\cos(qx_+)$ and ${\bf s}(q)=\sin(qx_+)$:

\begin{gather*}
\big(- c_1(z_2{\bf c}(2)-z_1{\bf s}(2)-2(x_+z_1-z_2))
-c_2x_+(-x_+^3{\bf c}(1)+(2x_+^2+3){\bf s}(1))
 \big)  
 \boldsymbol{\partial}_{x_+}\boldsymbol{\partial}_{z_1}
 \\
 +\big(- c_1(z_1{\bf c}(2)+z_2{\bf s}(2)-2(x_+z_2+z_1))
+c_2x_+((2x_+^2+3){\bf c}(1)+x_+^3{\bf s}(1))
 \big)  
 \boldsymbol{\partial}_{x_+}\boldsymbol{\partial}_{z_2}
 \\
 +\big(c_1(-2(z_1^2-z_2^2){\bf c}(2)-4z_1z_2{\bf s}(2)-z_1^2-z_2^2+4x_-x_+)
 -c_2((2x_+z_1(2x_+^2-3)
 \\
 +3z_2(2x_+^2-1)){\bf c}(1)
 +(3z_1(1-2x_+^2)-2x_+z_2(3-2x_+^2))){\bf s}(1))
  \big)\boldsymbol{\partial}_{x_+}\boldsymbol{\partial}_{x_-}
 +\big(-c_1(x_-{\bf s}(2) 
 \\
 +2x_+z_1z_2)
 -{\scriptstyle \frac{1}{2}}c_2x_+((x_+^3z_2+z_1(2x_+^2+9)){\bf c}(1)+(x_+^3z_1-z_2(2x_+^2+9)){\bf s}(1))
 \big)  \boldsymbol{\partial}_{z_1}^2
 +\big( 2c_1(x_-{\bf c}(2)
 \\
 +x_+(z_1^2-z_2^2))
 +c_2x_+((x_+^3z_1-z_2(2x_+^2+9)){\bf c}(1)-(x_+^3z_2+z_1(2x_+^2+9)){\bf s}(1))
 \big)  \boldsymbol{\partial}_{z_1}\boldsymbol{\partial}_{z_2}
 \\
 +\big( {\scriptstyle \frac{1}{3}}c_1( z_2(z_2^2-3z_1^2){\bf c}(4)+z_1(z_1^2-3z_2^2){\bf s}(4)+(2z_2(2z_2^2-3z_1^2)+6x_-z_1){\bf c}(2)+(z_1(z_1^2-9z_2^2)
 \\
 +6x_-z_2){\bf s}(2)
 -6z_2(2x_-x_+-z_1^2-z_2^2))
 +{\scriptstyle \frac{1}{2}}c_2(( 2z_2^2 x_+^2(x_+^2+6)+4x_+^3z_1z_2
 +6x_+(z_1z_2-4x_-)
 \\
 +9(z_1^2+z_2^2)){\bf c}(1)
 +(-2x_+^4z_1z_2+4z_2x_+^2(z_2x_+-3z_1  )-6x_+(z_2^2+2z_1^2)+12x_-){\bf s}(1))
 \big)  \boldsymbol{\partial}_{z_1}\boldsymbol{\partial}_{x_-}
 \\
  +\big(c_1(x_-{\bf s}(2)+2x_+z_1z_2)+{\scriptstyle \frac{1}{2}}c_2x_+(( x_+^2(x_+z_2+2z_1)+9z_1){\bf c}(1)+( x_+^2(x_+z_1-2z_2)-9z_2){\bf s}(1))
 \big)  \boldsymbol{\partial}_{z_2}^2
 \\
  +\big( {\scriptstyle \frac{1}{3}}c_1( z_1(3z_2^2-z_1^2){\bf c}(4)+z_2(z_2^2-3z_1^2){\bf s}(4)+(2z_1(2z_1^2-3z_2^2)-6x_-z_2){\bf c}(2)+(z_2(9z_1^2-z_2^2)
  \\
  +6x_-z_1){\bf s}(2)-6z_1^3+6(2x_-x_+-z_2^2)z_1)
 +{\scriptstyle \frac{1}{2}}c_2(-(2z_1z_2 x_+^2(x_+^2 +6)+4z_1^2x_+^3 -6x_+(z_1^2+2z_2^2)
 \\+12x_-){\bf c}(1)+(2z_1^2 x_+^2(x_+^2+6)-4x_+^3z_1z_2-6x_+(z_1z_2+4x_-)
 +9(z_1^2+z_2^2)){\bf s}(1)
 )
 \big)  \boldsymbol{\partial}_{z_2}\boldsymbol{\partial}_{x_-}
 \\
+\big({\scriptstyle \frac{1}{12}}c_1
 (-7(z_1^4-6z_1^2z_2^2+z_2^4){\bf c}(4)
 -28z_1z_2(z_1^2-z_2^2){\bf s}(4)
 +(4(z_1^4-z_2^4)-72x_-z_1z_2){\bf c}(2)
 \\
  +(8z_1z_2(z_1^2+z_2^2)+36x_-(z_1^2-z_2^2)){\bf s}(2)
 -9(z_1^2+z_2^2)^2 -24x_-^2)
 +{\scriptstyle \frac{1}{2}}c_2(
 (x_+z_1(3-2x_+^2)(z_1^2-3z_2^2)
 \\
 +z_2(x_+^4+4)(3z_1^2-z_2^2)){\bf c}(3)
 +(x_+z_2(3-2x_+^2)(3z_1^2-z_2^2)-z_1(x_+^4+4)(z_1^2-3z_2^2)){\bf s}(3)
 \\
 +(-z_2(2x_+^4+15)(z_1^2+z_2^2)+12x_-(z_1+2x_+z_2)){\bf c}(1)
 +(z_1(15+2x_+^4)(z_1^2+z_2^2)
 \\
 +12x_-(z_2-2x_+z_1)){\bf s}(1)
 \big)  \boldsymbol{\partial}_{x_-}^2
 + {\scriptstyle \frac{1}{4}}\big(
c_1(2(z_1^2-z_2^2){\bf c}(2)+4z_1z_2{\bf s}(2)-4x_-x_++z_1^2+z_2^2)
 \\
 +c_2((2x_+^2(2x_+z_1+3z_2)-3(2x_+z_1+z_2)){\bf c}(1)+(2x_+^2(2x_+z_2-3 z_1)-3(2x_+z_2-z_1)){\bf s}(1))
 \big)g_{0,-1,3 ,1}^{-1}
\end{gather*}
\end{prop}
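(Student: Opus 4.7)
The plan is to apply the BGG machinery of Section~\ref{sec-bgg-how} to the conformal Killing $2$-tensor operator. I would first take $\alpha\colon\fk\to\mathfrak{so}(2,4)$ from Proposition~\ref{cnormal}, compose with the $\mathfrak{so}(2,4)$-representation $\rho$ governing the tractor bundle for conformal Killing $2$-tensors, and compute the correction $\Psi$ through the algorithm of \cite[Section~3.3]{bgg-conf}. The normalization of the resulting operator $\Phi=\rho\circ\alpha+\Psi$ would then be verified by checking $\partial^*R^\Phi(s)=0$ for all $s\in\mathbb{V}$ in the sense of \cite{new-norm}; this is a finite linear test that I would carry out in Maple using \cite{bgg-comp}. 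Since we may assume $\epsilon=0$ whenever $a_1\neq a_2$ by the conformal equivalence $g_{1,a_1,a_2,\gamma}\simeq g_{0,a_1+\frac14,a_2+\frac14,\gamma}$, only the $\epsilon=0$ branch of the parameter space needs to be analyzed in the irreducible regime.

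Next, I would iterate the procedure from Section~\ref{sec-bgg-how} to find the maximal $\Phi$-invariant subspace $\mathbb{S}\subseteq\mathbb{V}$ annihilated by $R^\Phi$. Because $\Phi$ depends polynomially on $(a_1,a_2,\gamma)$, the linear systems driving each iteration have a rank that drops only on a proper algebraic subvariety of parameter space. Outside of this subvariety the solution space equals $\mathbb{S}^{\mathrm{gen}}$, which by Proposition~\ref{generic-CKT} accounts for reducible conformal Killing tensors only. The task is then to intersect the vanishing loci of the relevant determinantal conditions and eliminate redundant branches by appealing to the isometries $g_{0,a_1,a_2,\gamma}\simeq g_{0,\lambda^2 a_1,\lambda^2 a_2,\lambda\gamma}\simeq g_{0,a_2,a_1,\gamma}$. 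This should leave exactly the three discrete resonances $(a_1,a_2,\gamma)=(\frac83,\frac23,0)$, $(-\frac83,-\frac23,0)$ and $(-1,3,1)$, with the trigonometric and hyperbolic frequencies appearing in the stated formulae reflecting the eigenvalues of $\Phi(k^1)$ at those points.

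The explicit tensors would then be obtained by pushing a basis of a complement to $\mathbb{S}^{\mathrm{gen}}$ in $\mathbb{S}$ through the trivialization
\[
(x_1,x_2,x_3,x_4)\mapsto \pi\bigl(\exp(-\Phi(x_4 k^4))\exp(-\Phi(x_2 k^2+x_3 k^3))\exp(-\Phi(x_1 k^1))v\bigr),
\]
followed by conversion to Brinkmann coordinates via the change of variables recorded at the end of Section~2. Irreducibility and completeness would then follow by projecting the jets at the origin of the resulting tensors onto the jets of the reducible tensors computed in the proof of Proposition~\ref{reducible-prop}, confirming that they span a complementary subspace of the stated dimension ($9$, respectively $2$). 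The main obstacle is the global parameter analysis in the second step: each potential resonance in the spectrum of $\Phi(k^1)$ yields a candidate locus, and one must verify by direct substitution that no locus other than the three listed above survives all invariance and curvature conditions (modulo the listed isometries and conformal equivalence). A secondary practical difficulty is the sheer size of the symbolic expressions at the resonant points, which is why the formulae in the statement are so bulky and why the verification is delegated to the computer algebra file \cite{bgg-comp}.
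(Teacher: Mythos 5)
Your proposal follows essentially the same route as the paper: construct $\Phi=\rho\circ\alpha+\Psi$ via the algorithm of \cite{bgg-conf} with normalization checked through $\partial^*R^\Phi=0$, identify the generic invariant subspace $\mathbb{S}^{gen}$ with the reducible tensors of Proposition \ref{generic-CKT}, hunt for extra solutions in a complement by tracking where the parametric linear systems drop rank (the paper organizes this as the chain $C_0\supset C_1\supset C_{2,3}$ branching on the distinguished condition $\gamma v_{35}=0$ and normalizing by the scaling isometries), and finally exponentiate and compare jets against Proposition \ref{reducible-prop} to certify irreducibility and completeness. This matches the paper's proof in both strategy and the points delegated to the accompanying Maple computations.
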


We know from \cite{bgg-conf} that the representation $\rho: \mathfrak{so}(2,4)\to \mathfrak{gl}(\mathbb{V})$ is the Cartan component $\mathbb{V}\subset S^2 \mathfrak{so}(2,4).$ The space $S^2 \mathfrak{so}(2,4)$ is of dimension $120$, corresponds to positions $(i,j,k,l)$ for $i<j$, $k<l$, $(i,j)\leq (k,l)$, and is embedded in $\otimes^4 \mathbb{R}^6$ via the symmetries of the tensors in $S^2 \mathfrak{so}(2,4)$. Recall that $S^2 \mathfrak{so}(2,4)$ decomposes into distinguished eigenspaces for the $\rho$-action of the grading element in $\mathfrak{so}(2,4)$, \cite{parabook}, with eigenvalues shifted to $0,\dots,4$ according to our conventions. We order the positions $(i,j,k,l)$ in the following way, which describes this grading explicitly:
\begin{enumerate}
\item Firstly, we order 
\begin{enumerate}
\item[(i)] $(1,j,1,l)$, $1<j\leq l<6$ with variables $w_1,\dots,w_{10}$  
\end{enumerate}
corresponding to the highest grading component $S^2 \mathfrak{so}(2,4)_4$.
\item Next, we order 
\begin{enumerate}
\item[(ii)] $(1,j,k,l)$, $1<j,k,l<6$ with variables $w_{11},\dots,w_{34}$, and 
\item[(iii)] $(1,j,1,6)$, $1<j<6$ with variables $w_{35},\dots,w_{38}$ 
\end{enumerate}
corresponding to the grading component $S^2 \mathfrak{so}(2,4)_3$.
\item Next, we order 
\begin{enumerate}
\item[(iv)] $(1,j,k,6)$, $1<j,k<6$, with variables $w_{39},\dots,w_{54}$, 
\item[(v)] $(1,6,k,l)$, $1<k,l<6$,  with variables $w_{55},\dots,w_{60}$,
\item[(vi)] $(1,6,1,6)$,  with variable $w_{61}$,  and 
\item[(vii)] $(i,j,k,l)$, $1<i,j,k,l<6$  with variables $w_{62},\dots, w_{82}$
\end{enumerate}
corresponding to the grading component $S^2 \mathfrak{so}(2,4)_2$.
\item Next, we order 
\begin{enumerate}
\item[(viii)] $(i,j,k,6)$, $1<i,j,k<6$, with variables $w_{83},\dots,w_{106}$, and 
\item[(ix)]$ (1,6,k,6)$, $1<k<6$ with variables $w_{107},\dots,w_{110}$
\end{enumerate}
corresponding to the grading component $S^2 \mathfrak{so}(2,4)_1$.
\item Finally, we order 
\begin{enumerate}
\item[(x)] $(i,6,k,6),1<i\leq k<6$ with variables $w_{111},\dots,w_{120}$
\end{enumerate}
corresponding to the grading component $\mathbb{X}=S^2 \mathfrak{so}(2,4)_0$.
The projection $\pi: S^2 \mathfrak{so}(2,4)\to S^2\fc$ is $\pi(v)=v_{i,6,k,6}e^{i-1}\otimes e^{j-1}.$
\end{enumerate}

The Cartan component $\mathbb{V}$ is $84$-dimensional, and we describe it (using the Casimir element) as the subspace of $S^2 \mathfrak{so}(2,4)$ with the following $84$ free variables 
$$w_1 \dots, w_{3}, w_{5},\dots, w_{18}, w_{27},\dots, w_{38},w_{55},\dots, w_{65},  w_{68},\dots,w_{90}, w_{99},\dots, w_{113}, w_{115}, \dots w_{120},$$
and  the dependent:
\begin{multicols}{2}
\begin{align*}
w_4&=-{\scriptstyle \frac12}(w_{5}+w_{8})
\\
w_{19}&=w_{15}+w_{29}
\\
w_{20}&=w_{16}-w_{30}-w_{37}
\\
w_{21}&=-w_{28}-w_{31}-w_{38}
\\
w_{22}&=w_{27}-w_{32}
\\
w_{23}&=-w_{14}+w_{18}
\\  
w_{24}&=-w_{13}-w_{17}+w_{35}
\\
w_{25}&=w_{16}+w_{30}
\\
w_{26}&=-w_{15}+w_{29}+w_{36}
\\
w_{39}&={\scriptstyle \frac12}(w_{62}+w_{68})  
\\
w_{40}&={\scriptstyle \frac12}(w_{55}-w_{64}+w_{70})
\\
w_{41}&=-{\scriptstyle \frac12}(w_{56}+w_{65}+w_{69})
\\
w_{42}&=-{\scriptstyle \frac14}(w_{57}+w_{61}+w_{73}+w_{77})
\\
w_{43}&={\scriptstyle \frac12}(w_{55}-w_{64}+w_{70}) 
\\
w_{44}&=-{\scriptstyle \frac12}w_{61} +w_{72}+{\scriptstyle \frac12}w_{73}
\\
w_{45}&=-{\scriptstyle \frac12}w_{58}-w_{71}+{\scriptstyle \frac12}w_{74} 
\\
w_{46}&=-{\scriptstyle \frac12}(w_{59}+w_{75}+w_{79})
\\
w_{47}&={\scriptstyle \frac12}(w_{56}-w_{65}-w_{69})
\end{align*}
\begin{align*}
\\
w_{48}&={\scriptstyle \frac12}w_{58}+{\scriptstyle \frac12}w_{74}-w_{71}
\\
w_{49}&=-w_{72}+{\scriptstyle \frac12}w_{77}
\\
w_{50}&={\scriptstyle \frac12}(-w_{60}-w_{76}+w_{78})
\\ 
w_{51}&={\scriptstyle \frac12}w_{57}-{\scriptstyle \frac14}w_{61}-{\scriptstyle \frac14}w_{73}-{\scriptstyle \frac14}w_{77}
\\ 
w_{52}&={\scriptstyle \frac12}(w_{59}-w_{75}-w_{79})
\\
w_{53}&={\scriptstyle \frac12}(w_{60}-w_{76}+w_{78})
\\
w_{54}&={\scriptstyle \frac12}(w_{80}+w_{82})
\\  
w_{66}&={\scriptstyle \frac12}w_{61}-w_{72}-{\scriptstyle \frac12}w_{73}+{\scriptstyle \frac12}w_{77}
\\
w_{67}&=w_{71}-w_{74}
\\
w_{91}&=-w_{84}-w_{89}-w_{107}
\\
w_{92}&=w_{86}+w_{99}
\\
w_{93}&=w_{90}+w_{103}
\\
w_{94}&=-w_{100}-w_{105}+w_{110}
\\
w_{95}&=-w_{85}+w_{88}
\\
w_{96}&=-w_{90}+w_{103}+w_{109}
\\
w_{97}&=w_{86}-w_{99}-w_{108}
\\
w_{98}&=w_{101}-w_{104}
\\  
w_{114}&=-{\scriptstyle \frac12}(w_{115}+w_{118})
\end{align*}
\end{multicols}

As described in Section \ref{ss-sec}, we find the prolongation connection $\Phi.$ The representation $\rho$ is induced by the standard action on $\otimes^4 \mathbb{R}^6$ and $\alpha$ is given in Proposition \ref{cnormal}. The components $\Psi$ of the prolongation connection are as follows, where $A:=a_1-a_2$ (a direct check that these formulae are correct can also be found in the Maple file \cite{bgg-comp} accompanying the arXiv submission):

\begin{multicols}{2}
  \begin{align*}  
       \Psi(e^1)_2 &= {\scriptstyle \frac{A}{30}} w_{11} \\
    \Psi(e^1)_3 &= -{\scriptstyle \frac{A}{30}}  w_{12} \\
    \Psi(e^1)_5 &= -{\scriptstyle \frac{A}{60}} ( 21 w_{13} - 2 w_{17} + 7 w_{35}+2 \gamma w_{63})  
    \\
    \Psi(e^1)_6 &= {\scriptstyle \frac{A}{10}} ( 2 w_{14}-\gamma (w_{62} + w_{68}) )   \\
    \Psi(e^1)_7 &= {\scriptstyle \frac{A}{1260}} (21(34w_{15} + 5 w_{29} - 3 w_{36}) \\ &+ 6\gamma (15  w_{56} - 12 w_{65} - 22  w_{69}) \\& +(36 \gamma^2 + 194 a_1 - 230 a_2) w_{83}  )
    \\
    \Psi(e^1)_8 &= {\scriptstyle \frac{A}{60}} ( 23 w_{13} + 2 w_{17} + 5 w_{35}-2 \gamma w_{63} ) 
    \end{align*}
    \begin{align*} 
    \\
    \Psi(e^1)_9 &= -{\scriptstyle \frac{A}{1260}} ( 21(34 w_{16} + 5 w_{30} - 3 w_{37}) \\ &-6\gamma ( 15  w_{55} - 22  w_{64} + 12 w_{70})\\ & +(36 \gamma^2 - 230 a_1  + 194 a_2) w_{87} ) 
     \\
    \Psi(e^1)_{10} &= {\scriptstyle \frac{A}{2520}} (1428 (2w_{28}+w_{31}+w_{38})  \\ &+312\gamma (2w_{71}-w_{74}) \\& +3(8 \gamma^2-367 a_1+359 a_2)w_{84}\\
    &-3(8\gamma^2+359 a_1-367 a_2) w_{89})  \\&-{\scriptstyle \frac{1531A^2}{1260}}  w_{107}
    \end{align*}
    \end{multicols}

    \begin{multicols}{2}
    \begin{align*}
       \Psi(e^1)_{13} &= {\scriptstyle \frac{A}{24}} (w_{62} - w_{68})  \\
    \Psi(e^1)_{14} &= -{\scriptstyle \frac{A}{15}}  w_{63} \\
    \Psi(e^1)_{15} &= {\scriptstyle \frac{A}{1680}} ( 555 w_{55} + 509 w_{64} - 599 w_{70} \\&+44 \gamma w_{87})  \\
    \Psi(e^1)_{16} &= {\scriptstyle \frac{A}{1680}} (- 555 w_{56} - 599 w_{65} - 509 w_{69} \\&+44 \gamma w_{83} )  \\
    \Psi(e^1)_{17} &= -{\scriptstyle \frac{A}{120}} (5 w_{62} + 3 w_{68})  \\
    \Psi(e^1)_{18} &= -{\scriptstyle \frac{A}{30}}  w_{63} \\
    \Psi(e^1)_{27} &= {\scriptstyle \frac{A}{168}} (9(-57w_{58}+36 w_{71}-17w_{74}) \\&+8 \gamma (7 w_{84}+10 w_{89}+2 w_{107}))  \\
    \Psi(e^1)_{28} &= {\scriptstyle \frac{A}{5040}} (3(150 w_{57} - 511 w_{61} - 1136 w_{72} \\& + 641 w_{73} + 1239 w_{77})-12\gamma (83 w_{85} \\&- 157  w_{88})-2(60 \gamma^2 - 101 a_1  \\&+ 41 a_2) w_{111} )  \\
    \Psi(e^1)_{29} &= {\scriptstyle \frac{A}{1680}} ( 7(- 3 w_{55} - 9 w_{64} + 11 w_{70})\\&+4 \gamma w_{87})  \\
    \Psi(e^1)_{30} &= {\scriptstyle \frac{A}{1680}} (7( 3 w_{56} + 11 w_{65} + 9 w_{69}) \\& +4 \gamma w_{83}) \\
    \Psi(e^1)_{31} &= 
     {\scriptstyle\frac{A}{168}}(47(w_{61}-4w_{72}-w_{73}+w_{77})\\&+22\gamma(w_{85}+w_{88})) -{\scriptstyle \frac{A^2}{1008}}71  w_{111}
    \\
    \Psi(e^1)_{32} &= 
    {\scriptstyle \frac{A}{14}}(9(2w_{71}-w_{74})-2\gamma(w_{84}-w_{89}))
    \\
    \Psi(e^1)_{33} &= -{\scriptstyle \frac{A}{504}} 
    (12(39 w_{59} - 47( w_{75} +  w_{79}) \\&- \gamma (22  w_{90} + 29  w_{103} - 27  w_{109})) \\&+ (96 \gamma^2  + 23 a_1  - 119 a_2) w_{112})
    \\
    \Psi(e^1)_{34} &= -{\scriptstyle \frac{A}{504}} 
    (12(
- 39 w_{60} + 47 (w_{76} -  w_{78}) \\&- \gamma(22  w_{86} + 29  w_{99} - 27  w_{108})) \\&-(96 \gamma^2  - 119 a_1  + 23 a_2) w_{113}
)\\
    \Psi(e^1)_{35} &= {\scriptstyle \frac{A}{40}} (w_{62} - w_{68}) \\
    \Psi(e^1)_{36} &= {\scriptstyle \frac{A}{1680}} (  7(9 w_{55} + 11 w_{64} - 17 w_{70}) \\&-4 \gamma w_{87}) 
    \\
    \Psi(e^1)_{37} &= -{\scriptstyle \frac{A}{1680}} (  7(9 w_{56} + 17 w_{65} + 11 w_{69}) \\&+ 4 \gamma w_{83}) \\
    \Psi(e^1)_{38} &= 
   {\scriptstyle \frac{A}{840}} (49(-w_{61}+4w_{72}+w_{73}-w_{77}) \\& -38\gamma (w_{85}+ w_{88})) +{\scriptstyle \frac{71A^2 }{5040}}  w_{111}
   \\
    \Psi(e^1)_{55} &= {\scriptstyle \frac{A}{30}}  w_{83} \\
    \Psi(e^1)_{56} &= -{\scriptstyle \frac{A}{30}}  w_{87} \\
    \Psi(e^1)_{57} &= -{\scriptstyle \frac{19A}{420}} (w_{84} - w_{89})
\\
    \Psi(e^1)_{58} &= {\scriptstyle \frac{A}{84}} (w_{85} + w_{88}) 
    \end{align*}
    \begin{align*} 
\\
    \Psi(e^1)_{59} &= {\scriptstyle \frac{A}{28}}  ( w_{86} - 11 w_{108} + 2 \gamma w_{113} ) \\
     \Psi(e^1)_{60} &= {\scriptstyle \frac{A}{28}} (- w_{90} + 11 w_{109} +2 \gamma w_{112} ) \\
    \Psi(e^1)_{61} &= -{\scriptstyle \frac{A}{20}} (w_{84} - w_{89}) \\
    \Psi(e^1)_{64} &= {\scriptstyle \frac{53 A}{840}} w_{83} \\
    \Psi(e^1)_{65} &= -{\scriptstyle \frac{53A}{840}}  w_{87} \\
    \Psi(e^1)_{69} &= -{\scriptstyle \frac{53 A}{840}}  w_{87} \\
    \Psi(e^1)_{70} &= -{\scriptstyle \frac{53 A}{840}}  w_{83} \\
    \Psi(e^1)_{71} &= -{\scriptstyle \frac{A}{84}} ( - 5 w_{85} + 34 w_{88}+\gamma w_{111}) \\
    \Psi(e^1)_{72} &= -{\scriptstyle \frac{A}{840}} (67 w_{84} - 237 w_{89} + 90 w_{107}) \\
    \Psi(e^1)_{73} &= {\scriptstyle \frac{23A}{84}} (w_{84} - w_{89}) \\
    \Psi(e^1)_{74} &= -{\scriptstyle \frac{29  A}{84}} (w_{85} + w_{88}) \\
    \Psi(e^1)_{75} &= -{\scriptstyle \frac{A}{84}} (  101 w_{86} + 28 w_{99}- 33 w_{108} \\&+20 \gamma w_{113}) \\
    \Psi(e^1)_{76} &= -{\scriptstyle \frac{A}{84}} (  - 101 w_{90}- 28 w_{103} + 33 w_{109} \\&+20 \gamma w_{112}) \\
    \Psi(e^1)_{77} &= -{\scriptstyle \frac{2A}{5}} (w_{84} - w_{89}) \\
    \Psi(e^1)_{78} &= -{\scriptstyle \frac{A}{6}} (-8 w_{90} + 2 w_{103} + 5 w_{109} + \gamma w_{112}) \\
\Psi(e^1)_{79} &= {\scriptstyle \frac{A}{6}} (8 w_{86} - 2 w_{99} - 5 w_{108} + \gamma w_{113}) \\
\Psi(e^1)_{80} &= -{\scriptstyle \frac{A}{6}} (11 w_{100} + 7 w_{105} + \gamma w_{116}) \\
\Psi(e^1)_{81} &= -{\scriptstyle \frac{A}{2}} (  - 3 (w_{101} - w_{104})+\gamma(w_{115} + w_{118})  ) \\
\Psi(e^1)_{82} &= -{\scriptstyle \frac{A}{6}} (-7 w_{100} - 11 w_{105} + \gamma w_{116}) \\
    \Psi(e^1)_{84} &= -{\scriptstyle \frac{ A}{12}} w_{111} \\
    \Psi(e^1)_{86} &= {\scriptstyle \frac{ A}{12}}  w_{112} \\
    \Psi(e^1)_{89} &= {\scriptstyle \frac{ A}{12}}  w_{111} \\
    \Psi(e^1)_{90} &= {\scriptstyle -\frac{ A}{12}}  w_{113} \\
     \Psi(e^1)_{99} &= {\scriptstyle -\frac{ A}{6}}  w_{112} \\
    \Psi(e^1)_{100} &= {\scriptstyle -\frac{ A}{6}} (2 w_{115} + w_{118})  \\
    \Psi(e^1)_{101} &= {\scriptstyle -\frac{ A}{6}} w_{116}  \\
      \Psi(e^1)_{102} &= {\scriptstyle -\frac{A}{2}} w_{117} \\
    \Psi(e^1)_{102} &= {\scriptstyle -\frac{A}{2}} w_{117}  \\
    \Psi(e^1)_{103} &= {\scriptstyle \frac{A}{6}}  w_{113} \\
    \Psi(e^1)_{104} &= {\scriptstyle \frac{A}{6}} w_{116}  \\  
\Psi(e^1)_{105} &= {\scriptstyle \frac{A}{6}} (w_{115} + 2 w_{118})  \\
    \Psi(e^1)_{106} &= {\scriptstyle \frac{A}{2}} w_{119}  \\
    \Psi(e^1)_{108} &= {\scriptstyle \frac{A}{12}}  w_{112} \\
    \Psi(e^1)_{109} &= {\scriptstyle -\frac{A}{12}}  w_{113} \\
    \Psi(e^1)_{110} &= {\scriptstyle -\frac{A}{12}} (w_{115} - w_{118})  
\end{align*}
\end{multicols}

\begin{multicols}{2}
\begin{align*}
\Psi(e^2)_{5} &= -{\scriptstyle \frac{19A}{60}} w_{11} \\
\Psi(e^2)_{6} &= -{\scriptstyle \frac{7A}{30}} w_{12} \\
\Psi(e^2)_{7} &= 
{\scriptstyle \frac{A}{420}}(-98w_{13}+7(w_{17}+11w_{35}) \\& +8\gamma w_{63}) 
\\
\Psi(e^2)_{8} &= {\scriptstyle \frac{7A}{20}} w_{11} \\
\Psi(e^2)_{9} &= 
{\scriptstyle \frac{A}{420}}(7(22w_{14}-5w_{18}) \\&-2\gamma( w_{62}-9 w_{68}))
\\
\Psi(e^2)_{10} &= 
{\scriptstyle \frac{A}{2520}}(84(17 w_{15}-5 w_{29}-14 w_{36}) \\&-24\gamma(15 w_{56}+ w_{65}+9 w_{69} )\\&-(168\gamma^2-325a_1 +157a_2 )w_{83})
\\
\Psi(e^2)_{15} &= 
{\scriptstyle \frac{A}{1680}}(579 w_{62}+529 w_{68})
\\
\Psi(e^2)_{16} &= -{\scriptstyle \frac{11 A}{840}} w_{63} \\
\Psi(e^2)_{27} &= 
{\scriptstyle \frac{A}{210}}(3(49 w_{56}-43 w_{65})-127 w_{69} \\&-64\gamma w_{83})
\\
\Psi(e^2)_{28} &= 
{\scriptstyle \frac{A}{1680}}(-363w_{55}+787w_{64}-785 w_{70}\\&-672\gamma w_{87})
\\
\Psi(e^2)_{29} &= {\scriptstyle \frac{A}{240}}(w_{62} - 5w_{68}) \\
\Psi(e^2)_{30} &= {\scriptstyle \frac{A}{40}} w_{63} \\
\Psi(e^2)_{31} &= {\scriptstyle \frac{A}{1680}}(-513w_{55}+557 w_{64}-495 w_{70} \\&+168\gamma w_{87})
\\
\Psi(e^2)_{32} &= {\scriptstyle \frac{A}{840}}(267 w_{56}-223 w_{65}-261 w_{69}\\&+100\gamma w_{83})
\\
\Psi(e^2)_{33} &= {\scriptstyle \frac{A}{1680}}(3(570w_{57}-203w_{61}-248w_{72})\\&-1141w_{73}-739w_{77}+24\gamma(13 w_{85}\\&-7 w_{88})+(280\gamma ^2-69a_1-211a_2)w_{111})\\
\Psi(e^2)_{34} &= {\scriptstyle \frac{A}{168}}(15 w_{58}-52w_{71}+23w_{74}\\&+4\gamma(25 w_{84}+21 w_{89}+52w_{107}) )
\\
\Psi(e^2)_{36} &=  {\scriptstyle \frac{A}{240}}(17w_{62}+11w_{68})
\\
\Psi(e^2)_{37} &= {\scriptstyle \frac{A}{40}} w_{63} \\
\Psi(e^2)_{38} &= {\scriptstyle \frac{A}{1680}}(7(-9w_{55}+17w_{64}-11w_{70}) \\& +80\gamma w_{87})
\\
\Psi(e^2)_{57} &= {\scriptstyle \frac{A}{84}} w_{83} 
\\
\Psi(e^2)_{58} &= -{\scriptstyle \frac{19A}{420}} w_{87} \\
\Psi(e^2)_{59} &= {\scriptstyle \frac{A}{420}}(-4w_{84}+19w_{89}+165w_{107})
\\
\Psi(e^2)_{60} &= {\scriptstyle \frac{A}{84}}(-w_{85}+2w_{88}+6\gamma w_{111})
\\
\Psi(e^2)_{61} &= {\scriptstyle \frac{A}{20}} w_{83} \\
\Psi(e^2)_{71} &= {\scriptstyle \frac{41A}{120}} w_{87} \\
\Psi(e^2)_{72} &= {\scriptstyle \frac{A}{60}} w_{83}
\end{align*}
\begin{align*}
 \\
\Psi(e^2)_{73} &= -{\scriptstyle \frac{2A}{5}} w_{83} \\
\Psi(e^2)_{74} &= {\scriptstyle \frac{29A}{84}} w_{87} \\
\Psi(e^2)_{75} &= {\scriptstyle \frac{A}{840}}(823w_{84}+577w_{89}+700w_{107})
\\
\Psi(e^2)_{76} &= {\scriptstyle \frac{A}{12}}(-5w_{85}+7w_{88}+3\gamma w_{111}) 
\\
\Psi(e^2)_{77} &= {\scriptstyle \frac{23A}{84}} w_{83} \\
\Psi(e^2)_{78} &= {\scriptstyle \frac{A}{84}}(- 52w_{85} + 77w_{88}+13\gamma w_{111} ) \\
\Psi(e^2)_{79} &= {\scriptstyle \frac{A}{840}}(437w_{84} + 293w_{89} + 330w_{107}) \\
\Psi(e^2)_{80} &= {\scriptstyle -\frac{A}{42}}(   - 25w_{86} - 77w_{99}+16w_{108} \\& +13\gamma w_{113}) \\
\Psi(e^2)_{81} &= {\scriptstyle \frac{A}{84}}(  3( - 12w_{90}+42w_{103}) + 23w_{109} \\& +50\gamma w_{112}) \\
\Psi(e^2)_{82} &= {\scriptstyle \frac{A}{6}}( - 2w_{86} - 7w_{99} + w_{108} +3\gamma w_{113} ) \\
\Psi(e^2)_{99} &= {\scriptstyle \frac{A}{6}} w_{111} \\
\Psi(e^2)_{100} &= {\scriptstyle \frac{5A}{12}} w_{112} \\
\Psi(e^2)_{101} &= {\scriptstyle \frac{A}{4}} w_{113} \\
\Psi(e^2)_{102} &= {\scriptstyle -\frac{A}{6}}(2w_{115} + w_{118}) \\
\Psi(e^2)_{104} &= {\scriptstyle \frac{A}{6}} w_{113} \\
\Psi(e^2)_{105} &= -{\scriptstyle \frac{A}{6}} w_{112} \\
\Psi(e^2)_{108} &= -{\scriptstyle \frac{A}{12}} w_{111} \\
\Psi(e^2)_{110} &= {\scriptstyle \frac{A}{12}} w_{112} \\
 & \\
\Psi(e^3)_5 &= {\scriptstyle -\frac{7A}{20}} w_{12} \\
\Psi(e^3)_6 &= {\scriptstyle \frac{7A}{30}} w_{11} \\
\Psi(e^3)_7 &= {\scriptstyle \frac{A}{420}}( 119w_{14} + 35w_{18} \\& +2\gamma(9 w_{62} -  w_{68}) ) \\
\Psi(e^3)_8 &= {\scriptstyle \frac{19A}{60}} w_{12} \\
\Psi(e^3)_9 &= {\scriptstyle \frac{A}{420}}(  7(15w_{13} + w_{17} - 12w_{35}) \\&+8\gamma w_{63})
\\
\Psi(e^3)_{10} &= {\scriptstyle \frac{A}{2520}}(  84(- 17w_{16} + 5w_{30} + 14w_{37}) \\&- 24\gamma(15 w_{55} - 9 w_{64} -  w_{70})\\& +(168\gamma^2  + 157a_1 - 325a_2)w_{87}) \\
\Psi(e^3)_{15} &= {\scriptstyle \frac{11A}{840}} w_{63} 
\\
\Psi(e^3)_{16} &= {\scriptstyle -\frac{A}{1680}}(529w_{62} + 579w_{68}) \\
\Psi(e^3)_{27} &= {\scriptstyle -\frac{A}{840}}(  321w_{55} - 247w_{64} + 293w_{70}\\&+356\gamma w_{87}) \\
\Psi(e^3)_{28} &= {\scriptstyle \frac{A}{1680}}( - 939w_{56} + 1357w_{65} + 1463w_{69}\\&+424\gamma w_{83}) \\
\Psi(e^3)_{29} &= {\scriptstyle -\frac{A}{40}} w_{63}
\end{align*}
\end{multicols}
\begin{multicols}{2}
\begin{align*}
\Psi(e^3)_{30} &= {\scriptstyle \frac{A}{240}}(5w_{62} - w_{68}) \\
\Psi(e^3)_{31} &= {\scriptstyle \frac{A}{1680}}(  513w_{56} - 495w_{65} - 557w_{69}\\&+168\gamma w_{83}) \\
\Psi(e^3)_{32} &= {\scriptstyle -\frac{A}{840}}( 3(- 89w_{55} + 87w_{64}) - 223w_{70}\\&+100\gamma w_{87}) 
\\
\Psi(e^3)_{33} &= {\scriptstyle \frac{A}{168}}(  15w_{58} + 52w_{71} - 29w_{74}  \\&+ 4\gamma(21 w_{84} + 25 w_{89} +52 w_{107})) \\
\Psi(e^3)_{34} &= {\scriptstyle \frac{A}{1680}}(  3(- 570w_{57} + 327w_{61} -248w_{72}) \\&+ 769w_{73} + 1111w_{77} - 24\gamma(7 w_{85} \\&- 13 w_{88}) -(280\gamma^2 - 211a_1- 69a_2)w_{111}) \\
\Psi(e^3)_{36} &= {\scriptstyle -\frac{A}{40}} w_{63} \\
\Psi(e^3)_{37} &= {\scriptstyle -\frac{A}{240}}(11w_{62} + 17w_{68}) \\
\Psi(e^3)_{38} &= {\scriptstyle \frac{A}{1680}}(  7(9w_{56} - 11w_{65} - 17w_{69})\\& +80\gamma w_{83}) \\
\Psi(e^3)_{57} &= {\scriptstyle -\frac{A}{84}} w_{87} \\
\Psi(e^3)_{58} &= {\scriptstyle -\frac{19A}{420}} w_{83} \\
\Psi(e^3)_{59} &= {\scriptstyle -\frac{A}{84}}( - 2w_{85} + w_{88}+6\gamma w_{111}) \\
\Psi(e^3)_{60} &= {\scriptstyle -\frac{A}{420}}(19w_{84} - 4w_{89} + 165w_{107}) \\
\Psi(e^3)_{61} &= {\scriptstyle -\frac{A}{20}} w_{87} \\
\Psi(e^3)_{71} &= {\scriptstyle \frac{A}{280}} w_{83} \\
\Psi(e^3)_{72} &= {\scriptstyle -\frac{29A}{84}} w_{87} \\
\Psi(e^3)_{73} &= {\scriptstyle \frac{2A}{5}} w_{87} \\
\Psi(e^3)_{74} &= {\scriptstyle \frac{29A}{84}} w_{83} \\
\Psi(e^3)_{75} &= {\scriptstyle \frac{A}{12}}( - 7w_{85} + 5w_{88}+3\gamma w_{111}) \\
\Psi(e^3)_{76} &= {\scriptstyle \frac{A}{840}}(577w_{84} + 823w_{89} + 700w_{107}) \\
\Psi(e^3)_{77} &= {\scriptstyle -\frac{23A}{84}} w_{87} \\
\Psi(e^3)_{78} &= {\scriptstyle \frac{A}{840}}(293w_{84} + 437w_{89} + 330w_{107}) \\
\Psi(e^3)_{79} &= {\scriptstyle -\frac{A}{84}}( - 77w_{85} + 52w_{88}+13\gamma w_{111}) 
\\
\Psi(e^3)_{80} &= {\scriptstyle \frac{A}{6}}(   2w_{90}+ 7w_{103} - w_{109}+3\gamma w_{112}) \\
\Psi(e^3)_{81} &= {\scriptstyle \frac{A}{84}}(   3(13w_{86} -42w_{99})- 23w_{108} \\& +50\gamma w_{113}) 
\\
\Psi(e^3)_{82} &= {\scriptstyle -\frac{A}{42}}( 25w_{90} + 77w_{103} - 16w_{109}\\&+13\gamma w_{112} )
\end{align*}
\begin{align*} 
\\
\Psi(e^3)_{100} &= {\scriptstyle \frac{A}{6}} w_{113} \\
\Psi(e^3)_{101} &= {\scriptstyle -\frac{A}{6}} w_{112} \\
\Psi(e^3)_{103} &= -{\scriptstyle\frac{A}{6}} w_{111} \\
\Psi(e^3)_{104} &= -{\scriptstyle\frac{A}{4}} w_{112} \\
\Psi(e^3)_{105} &= -{\scriptstyle\frac{5A}{12}} w_{113} \\
\Psi(e^3)_{106} &= {\scriptstyle\frac{A}{6}}(w_{115} + 2w_{118}) \\
\Psi(e^3)_{109} &= {\scriptstyle\frac{A}{12}} w_{111} \\
\Psi(e^3)_{110} &= -{\scriptstyle\frac{A}{12}} w_{113} 
\\ & \\
\Psi(e^4)_7 &= {\scriptstyle =\frac{A}{20}} w_{11} \\
\Psi(e^4)_9 &= {\scriptstyle \frac{A}{20}} w_{12} \\
\Psi(e^4)_{10} &= {\scriptstyle \frac{A}{70}}(7( w_{13} + 2w_{17} - w_{35})-4\gamma w_{63} ) \\
\Psi(e^4)_{27} &= {\scriptstyle -\frac{3A}{140}} w_{63} \\
\Psi(e^4)_{28} &= {\scriptstyle -\frac{A}{840}}(167w_{62} + 149w_{68}) \\
\Psi(e^4)_{31} &= {\scriptstyle \frac{3A}{140}}(w_{62} - w_{68}) \\
\Psi(e^4)_{32} &= {\scriptstyle -\frac{3A}{70}} w_{63} \\
\Psi(e^4)_{33} &= {\scriptstyle \frac{A}{840}}(- 129w_{55} + 131w_{64} - 185w_{70} \\& +44\gamma w_{87} ) \\
\Psi(e^4)_{34} &= {\scriptstyle \frac{A}{840}}(  129w_{56} - 185w_{65} - 131w_{69} \\& +44\gamma w_{83}) \\
\Psi(e^4)_{59} &= {\scriptstyle -\frac{3A}{140}} w_{83} \\
\Psi(e^4)_{60} &= {\scriptstyle \frac{3A}{140}} w_{87} \\
\Psi(e^4)_{75} &= {\scriptstyle -\frac{197A}{840}} w_{83} \\
\Psi(e^4)_{76} &= {\scriptstyle \frac{197A}{840}} w_{87} \\
\Psi(e^4)_{78} &= {\scriptstyle \frac{193A}{840}} w_{87} \\
\Psi(e^4)_{79} &= {\scriptstyle \frac{193A}{840}} w_{83} \\
\Psi(e^4)_{80} &= {\scriptstyle -\frac{A}{420}}(197 w_{84} + 193 w_{89} + 230 w_{107}) \\
\Psi(e^4)_{81} &= {\scriptstyle \frac{A}{84}}( 39(- w_{85} + w_{88})+16\gamma w_{111}) \\
\Psi(e^4)_{82} &= {\scriptstyle \frac{A}{420}}(193w_{84} + 197w_{89} + 230w_{107}) \\
\Psi(e^4)_{100} &= {\scriptstyle \frac{A}{12}} w_{111} \\
\Psi(e^4)_{102} &= {\scriptstyle- \frac{A}{12}} w_{112} \\
\Psi(e^4)_{105} &= {\scriptstyle- \frac{A}{12}} w_{111} \\
\Psi(e^4)_{106} &= {\scriptstyle \frac{A}{12}} w_{113} 
\end{align*}
\end{multicols}

Let us now write down the results of the third step from Section \ref{ts-cec}, starting with the generic solutions. Moreover, this allows us to show that generically all the conformal Killing tensors are reducible.

\begin{prop}\label{generic-CKT}
If  $a_1\neq a_2$,  then generically, there is $27$-dimensional space of reducible conformal Killing tensors.
\end{prop}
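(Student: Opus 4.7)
The plan is to execute the third step from Section \ref{ts-cec} with the prolongation connection $\Phi=\rho\circ\alpha+\Psi$ determined above, treating the parameters $a_1,a_2,\gamma$ (with $A=a_1-a_2\neq0$) as generic, i.e., avoiding the finitely many algebraic conditions on $a_1,a_2,\gamma$ under which additional solutions can appear. The conclusion of the proposition then follows by comparing the computed dimension to the $27$ coming from Proposition \ref{reducible-prop}(1): since reducible conformal Killing tensors always lie in $\mathbb{S}$, and we will establish $\dim\mathbb{S}=27$ generically, every conformal Killing tensor must be reducible in this regime.

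First, I would compute the curvature $R^\Phi(X,Y)=[\Phi(X),\Phi(Y)]-\Phi([X,Y])$ on the basis vectors $k^i\wedge k^j$ of $\Lambda^2\fk$, using the explicit formulae for $\alpha$, $\Psi$ and the bracket table of $\fk$. This produces a linear map $\Lambda^2\fk\otimes\mathbb{V}\to\mathbb{V}$ whose coefficients are polynomials in $a_1,a_2,\gamma$. Setting $R^\Phi(X,Y)v=0$ for all $X,Y\in\fk$ yields an overdetermined linear system in the $84$ coordinates $w_i$ of $v\in\mathbb{V}$, and its solution space is $\mathbb{S}^0$. Because $\Psi$ annihilates components of grading $0$ and $1$ in $\mathbb{V}$, the leading terms of the curvature come from $\rho\circ\alpha$, which makes the computation tractable algorithmically.

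Next, I would run the descending iteration $\mathbb{S}^{i+1}=\{v\in\mathbb{S}^i:\Phi(X)v\in\mathbb{S}^i\text{ for all }X\in\fc\}$, which stabilizes after finitely many steps as remarked in Section \ref{ts-cec}. Concretely, this is a sequence of linear subsystems in the free variables of $\mathbb{S}^i$, and for generic parameters all pivot coefficients involve only nonvanishing polynomial expressions in $a_1,a_2,\gamma,A$, so no case splitting is required. I would carry out this iteration in the computer algebra system (as documented in the Maple file \cite{bgg-comp}) and read off $\dim\mathbb{S}=27$.

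The main obstacle is organizational rather than conceptual: one must ensure that every pivoting step performed during the elimination is valid generically, i.e., that the denominators one divides by are nonzero polynomials in $(a_1,a_2,\gamma)$, and that the parameter values making any such denominator vanish are collected and deferred to the subsequent non-generic analysis in Section \ref{sec-CKT}. With this bookkeeping in place, the output of the algorithm produces exactly the $27$-dimensional space of Proposition \ref{reducible-prop}(1), proving the claim.
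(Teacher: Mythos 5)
Your proposal is correct and follows essentially the same route as the paper: compute the generic $\Phi$-invariant subspace $\mathbb{S}^{gen}$ annihilated by $R^{\Phi}$ via the iterative linear-algebra procedure of Section \ref{ts-cec} (carried out in the accompanying Maple file), find that it is $27$-dimensional, and conclude by comparison with the $27$-dimensional space of reducible conformal Killing tensors from Proposition \ref{reducible-prop}(1) that generically every conformal Killing tensor is reducible. (Your side remark that $\Psi$ has no output in the gradings $0$ and $1$ is not quite accurate --- e.g.\ $\Psi(e^1)_{99}$ is nonzero and lands in the grading-$1$ component --- but this heuristic plays no role in the validity of the argument.)
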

\begin{proof}
The generic $\Phi$-invariant subspace $\mathbb{S}^{gen}$ annihilated by the curvature $R^{\Phi}$ of $\Phi$ can be solved directly; see Maple file \cite{bgg-comp}. The resulting subset $\mathbb{S}^{gen}\subset S^2 \mathfrak{so}(2,4)$ provides a $27$-dimensional space of conformal Killing tensors. We have the following $27$ free variables
\begin{gather*} w_{77}, w_{78}, w_{79,} w_{80}, w_{81}, w_{82}, w_{86}, w_{89}, w_{90},w_{100}, w_{101}, w_{102}, w_{104}, w_{105}, w_{106},  w_{108}, \\ w_{109}, w_{110}, w_{111}, w_{112}, w_{113}, w_{115}, w_{116}, w_{117}, w_{118}, w_{119}, w_{120}
\end{gather*}
and the following dependent, where missing variables are either vanishing or are determined by the equations for $\mathbb{V}:$
\begin{multicols}{2}
\begin{align*}
w_{10}&={\scriptstyle \frac{1}{84}}(23a_1^2+38a_1a_2+23a_2^2)w_{111}\\
w_{27}&=-{\scriptstyle \frac{1}{84}}(19a_1+9a_2)\gamma w_{111}\\
w_{28}&={\scriptstyle \frac{1}{21}}(4a_1+17a_2)w_{89}\\
w_{31}&=-{\scriptstyle \frac12} (a_1+a_2) w_{89} \\
w_{32}&=-{\scriptstyle \frac13} \gamma(a_1+a_2) w_{111}\\
w_{33}&= {\scriptstyle \frac{1}{84}}a_1(465w_{86}-355w_{108}+202\gamma w_{113})\\
&+{\scriptstyle \frac{1}{84}}a_2(291 w_{86}-233 w_{108}+134\gamma w_{113}) \\ 
w_{34}&= {\scriptstyle \frac{1}{84}}a_1(291w_{90}-233w_{109}-134\gamma w_{112})\\ 
&-{\scriptstyle \frac{1}{84}}a_2(-465 w_{90} +355 w_{109}+ 202\gamma w_{112}) \\
w_{38}&=-{\scriptstyle \frac12} (a_1+a_2) w_{89}\\ 
 w_{57}&=-{\scriptstyle\frac{10}{3}}w_{77}+  {\scriptstyle \frac16}(12\gamma^2-a_1-a_2)w_{111} \\
 w_{58}&={\scriptstyle \frac43}\gamma w_{89} 
 \\
w_{59} &= -{\scriptstyle \frac83}w_{79} +{\scriptstyle \frac{1}{18}}\gamma(-324w_{90}+252w_{109})  \\
&+   {\scriptstyle \frac{1}{18}}(144\gamma^2-a_1-5a_2)w_{112}
\\
 w_{60} &= {\scriptstyle \frac83}w_{78} + {\scriptstyle \frac{1}{18}}\gamma(324w_{86}-252w_{108}) \\
 &+  {\scriptstyle \frac{1}{18}}(144\gamma^2-5a_1-a_2)w_{113}
 \end{align*}
\begin{align*}
\\
 w_{61}&=-{\scriptstyle \frac72}w_{77}+{\scriptstyle \frac12}(4\gamma^2-a_1-a_2)w_{111}  
\\
 w_{71}&={\scriptstyle \frac23}\gamma w_{89}\\
 w_{72}&={\scriptstyle \frac14}w_{77}  +{\scriptstyle \frac{1}{24}}(a_1-a_2)w_{111}\\
 w_{73}&= -{\scriptstyle \frac72}w_{77}+  {\scriptstyle \frac12}(4\gamma^2-a_1-a_2)w_{111}\\
 w_{74}&={\scriptstyle \frac43}\gamma w_{89} \\
 w_{75}&=-3w_{79}+ {\scriptstyle \frac13}\gamma(-54w_{90}+42w_{109}) \\
 &+{\scriptstyle \frac13}(24\gamma^2-a_1-2a_2)w_{112} \\
 w_{76} &= 3w_{78}+{\scriptstyle \frac13}\gamma(54w_{86}-42w_{108})
 \\&+{\scriptstyle \frac13}(24\gamma^2-2a_1-a_2)w_{113} \\
w_{84}&=w_{89} \\
w_{85}&={\scriptstyle \frac13}\gamma w_{111} \\
w_{88}&=-{\scriptstyle \frac13}\gamma w_{111}\\
 w_{99}&= 4w_{86}-3w_{108} + 2\gamma w_{113}\\ 
w_{103} &= 4w_{90} -3w_{109} -2\gamma w_{112}\\
w_{107}&=-w_{89} 
\end{align*}
\end{multicols}

Comparing this with Proposition \ref{reducible-prop}, we see that all of these are reducible for dimensional and continuity reasons.
\end{proof}

In Proposition \ref{generic-CKT}, we exhausted all reducible conformal Killing tensors. Therefore, the additional conformal Killing tensors that may appear for special values of the parameters are irreducible. To finish the proof of Theorem \ref{main-CKT}, we restrict ourselves to the complement $C$ of $\mathbb{S}^{gen}$ in $\mathbb{V}$.  From now on, we parametrize $\mathbb{V}$ by variables $v_i$ to distinguish the general and special case, and we define the complement $C$ by the following equations:
\begin{align*}
v_{72}&=0,  \:\: v_{78}=0, \:  v_{79}=0,\: \:  v_{80}=0, \:  v_{81}=0,  \: \: v_{82}=0, \:  v_{86}=0, \: \:  v_{89}=0, \: v_{90}=0,
\: v_{100}=0, \\ v_{101}&=0, v_{102}=0, v_{104}=0, v_{105}=0, v_{106}=0, v_{108}=0, v_{109}=0, v_{110}=0, 
v_{111}=0, v_{112}=0, \\
v_{113}&=0, v_{115}=0, v_{116}=0, v_{117}=0, v_{118}=0, v_{119}=0, v_{120}=0.
\end{align*}
We first find the subspace of $C$ annihilated by the curvature $R^{\Phi}$ of the prolongation connection $\Phi$ restricted to $C$, i.e., the subspace $C_0\subset C$ such that $R^{\Phi}(C_0)\subset \mathbb{S}^{gen}$. As this is an overdetermined system of equations, which, except for one distinguished equation, can be solved independently of the values of parameters, we present only the final result.

\newpage
\begin{lem}
If  $a_1\neq a_2$, then the subspace $C_0$ of $C$ such that $R^{\Phi}(C_0)\subset \mathbb{S}^{gen}$ is given by

\begin{multicols}{2}
\begin{align*}
v_5 &= {\scriptstyle \frac{1}{840}} (225 \gamma^2 + 86 a_1 - 26 a_2) v_{68} \\
v_7 &= {\scriptstyle \frac{1}{9}} \gamma v_{37} + {\scriptstyle \frac{1}{840}} (-165 \gamma^2 + 223 a_1 - 463 a_2) v_{64} \\
&+ {\scriptstyle \frac{1}{840}} (-615 \gamma^2 + 107 a_1 - 467 a_2) v_{70} \\
&+ {\scriptstyle \frac{1}{5040}} \gamma (-2335 \gamma^2 + 869 a_1 - 829 a_2) v_{87} \\
v_8 &= {\scriptstyle \frac{1}{840}} (225 \gamma^2 - 26 a_1 + 86 a_2) v_{68} \\
v_9 &= -{\scriptstyle \frac{1}{9}} \gamma v_{36} + {\scriptstyle \frac{1}{840}} (615 \gamma^2 + 467 a_1 - 107 a_2) v_{65} \\
&+ {\scriptstyle \frac{1}{840}} (-165 \gamma^2 - 463 a_1 + 223 a_2) v_{69} \\
&+ {\scriptstyle \frac{1}{5040}} \gamma (2335 \gamma^2 + 829 a_1 - 869 a_2) v_{83} \\
v_{13} &= {\scriptstyle \frac{10}{9}} v_{35} \\
v_{14} &= -{\scriptstyle \frac{19}{18}} \gamma v_{68} \\
v_{15} &= {\scriptstyle \frac{1}{36}}(40 v_{36}-\gamma(61 v_{65}-23  v_{69}))\\
&+ {\scriptstyle \frac{1}{2520}} (-2765 \gamma^2 + 219 a_1 + 341 a_2) v_{83} \\
v_{16} &= {\scriptstyle \frac{1}{36}}(40v_{37}-\gamma (23 v_{64}-61  v_{70}))\\
&+ {\scriptstyle \frac{1}{2520}} (-2765 \gamma^2 + 341 a_1 + 219 a_2) v_{87} \\
v_{17} &= -{\scriptstyle \frac{1}{18}} v_{35} \\
v_{18} &= -{\scriptstyle \frac{19}{36}} \gamma v_{68} 
\\ 
v_{29} &={\scriptstyle \frac{1}{72}}(4v_{36}+ \gamma(79 v_{65}-41 v_{69}))\\
&+ {\scriptstyle \frac{1}{720}} (415 \gamma^2 + 47 a_1 + 33 a_2) v_{83} 
\end{align*}
\begin{align*}
\\
v_{30} &={\scriptstyle \frac{1}{72}}(4v_{37}+\gamma(41 v_{64}+79 v_{70})) \\
&+ {\scriptstyle \frac{1}{720}} (415 \gamma^2 + 33 a_1 + 47 a_2) v_{87} \\
v_{31} &=   {\scriptstyle \frac{1}{18}}(22v_{38}+9\gamma v_{74})\\
&+ {\scriptstyle \frac{1}{630}} (-35 \gamma^2 - 137 a_1 - 143 a_2) v_{84} \\ &
+{\scriptstyle \frac{1}{9}} (5 \gamma^2 - 5 a_1 - 5 a_2) v_{107}\\
v_{32} &= {\scriptstyle \frac{1}{18}}\gamma (26v_{61}-16v_{73}+35v_{77})\\ 
&+ {\scriptstyle \frac{1}{252}} (-567 \gamma^2 - 11 a_1 + 11 a_2) v_{85} \\
&+ {\scriptstyle \frac{1}{252}} (567 \gamma^2 - 11 a_1 + 11 a_2) v_{88} \\
v_{55} &=  {\scriptstyle \frac{1}{2}} (4 v_{64} + 4 v_{70}+ 3\gamma v_{87})  \\
v_{56} &=  {\scriptstyle \frac{1}{2}} (- 4 v_{65} + 4 v_{69} -3 \gamma v_{83})  \\
v_{57} &={\scriptstyle \frac{1}{6}}(- 2 v_{61} + 6(v_{73} - v_{77})\\
&+9( \gamma v_{85} -  \gamma v_{88})) \\
v_{58} &= {\scriptstyle \frac{1}{6}}( 12 v_{74} -  \gamma(9 v_{84}  +10  v_{107})) \\
v_{59} &=  {\scriptstyle \frac{1}{6}} ( 4 v_{75} -9 \gamma v_{103}) \\
v_{60} &= {\scriptstyle \frac{1}{6}}( 4 v_{76} + 9 \gamma v_{99} )\\
v_{62} &= v_{68} 
\end{align*}
\end{multicols}
together with the distinguished condition $\gamma v_{35} = 0$,
and the following equal to zero
\begin{gather*}
    v_{1 }, v_{2 }, v_{3 }, v_{6 }, v_{11 }, v_{12 }, v_{63 }.
\end{gather*}
\end{lem}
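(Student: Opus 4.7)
The plan is to impose $R^{\Phi}(C_0) \subset \mathbb{S}^{gen}$ as an overdetermined linear system on the coordinates $v_i$ of $C$, and read off $C_0$ as its solution space. Since $\Phi = \rho \circ \alpha + \Psi$ is fully explicit (with $\alpha$ from Proposition \ref{cnormal} and the $\Psi(e^i)$ tabulated above), the curvature components
\[
R^{\Phi}(e^i, e^j) = [\Phi(e^i), \Phi(e^j)] - \Phi([e^i, e^j])
\]
for $1 \le i < j \le 4$ are computable $84 \times 84$ matrices whose entries are polynomials in $a_1, a_2, \gamma$. For each $v \in C$, the containment $R^{\Phi}(e^i, e^j)(v) \in \mathbb{S}^{gen}$ is equivalent to the vanishing of its projection onto the complement $C$, so I would assemble a single linear map $L : C \to C^{\oplus 6}$ and compute $C_0 = \ker L$.

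The computation is best organized using the grading of $S^2 \mathfrak{so}(2,4)$ into eigenspaces of the grading element, under which the $\alpha$-part of $\Phi$ raises grade and $\Psi$ preserves it. Consequently, the equations on low-grade components of $R^{\Phi}(v)$ involve only the high-grade coordinates of $v$, and vice versa, giving $L$ a block-triangular structure. I would solve the system from bottom to top: first the highest-grade block pins down $v_5, v_7, v_8, v_9$ in terms of the generators of lower-grade blocks; then the equations propagate through $v_{13}$--$v_{18}$, then $v_{29}$--$v_{32}$, then $v_{55}$--$v_{62}$, each time using the formulas already extracted to simplify the next layer. This reproduces precisely the triangular cascade of the explicit formulas in the statement.

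The main obstacle is controlling the interplay with the parameters $a_1, a_2, \gamma$. Because we assume $a_1 \neq a_2$, the scalar $A = a_1 - a_2$ is invertible, but the factor $\gamma$ is not. A single equation degenerates to the form $\gamma v_{35} = 0$, which is a genuine constraint rather than a formula---this is the ``distinguished equation'' flagged in the text and accounts for the later bifurcation between the $\gamma = 0$ and $\gamma \neq 0$ branches (see Theorem \ref{main-CKT}). The technical heart of the proof is therefore to verify that this is the \emph{only} parameter-dependent collapse, i.e., that every other pivot in the row reduction is a nonzero element of $\mathbb{R}[a_1, a_2, \gamma]$ under the standing assumption $a_1 \neq a_2$. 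Generic-position arguments alone do not suffice here; one must track each pivot explicitly.

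In practice the bookkeeping is intractable by hand, so I would carry out the row reduction in Maple (consistent with the companion file \cite{bgg-comp}), after which the vanishing of $v_1, v_2, v_3, v_6, v_{11}, v_{12}, v_{63}$ and the explicit formulas for $v_5, v_7$--$v_{18}, v_{29}$--$v_{32}, v_{55}$--$v_{62}$ can be read off directly. Verification is then a matter of substituting the candidate solution back into $R^{\Phi}(v)$ and confirming, as a polynomial identity in $a_1, a_2, \gamma$, that each component lands in $\mathbb{S}^{gen}$.
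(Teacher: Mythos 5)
Your proposal follows essentially the same route as the paper: the paper likewise treats $R^{\Phi}(C_0)\subset\mathbb{S}^{gen}$ as an overdetermined linear system in the $v_i$, observes that all equations except the single distinguished one $\gamma v_{35}=0$ can be solved uniformly in the parameters once $a_1\neq a_2$, and delegates the actual row reduction and back-substitution check to the accompanying Maple file. (Only your side remark on the grading is off — the tabulated $\Psi(e^i)$ raise the grade by one rather than preserve it, while $\rho\circ\alpha$ contains the grade-lowering part — but this does not affect the method, which reduces to explicit linear algebra in any case.)
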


Considering the distinguished equation for $C_0$, we need to branch into two cases: 
\begin{enumerate}
\item $\epsilon=\gamma=0,a_2=\pm 2+a_1$ (symmetric spaces), and 
\item $\epsilon=0,\gamma=1, v_{35}=0$.
\end{enumerate}

In the case of symmetric spaces $\epsilon=\gamma=0,a_2=\pm 2+a_1$, there is only one parameter $a_1$, and the linear equations for the maximal $\Phi$-invariant subspace of $C_0+\mathbb{S}^{gen}$ can be directly solved including the parameter $a_1$. Generically, there are no additional solutions. However, for $a_1 = \frac23$ (isometric to $a_1 = \frac83,a_1 = \frac23$) and $a_1 = -\frac83$, there are $9$-dimensional spaces of the additional solutions. There is no problem with exponentiation, because we have fixed all the parameters, and we get the irreducible conformal Killing tensors from Proposition \ref{expicit-CKT}.

The case $\epsilon=0,\gamma=1, v_{35}=0$ is more complicated, but it is easy to compute the subspace $C_1$ of $C_0$ such that $\Phi(\fk)C_1\subset  C_0+\mathbb{S}^{gen}.$

\begin{lem}
If  $a_1\neq a_2,\epsilon=0,\gamma=1, v_{35}=0$,  then the subspace $C_1$ of $C_0$ is given by
\begin{align*}
v_{10 }&= -{\scriptstyle \frac{1}{840}}( -100a_1-56 a_2+21) v_{73}+{\scriptstyle \frac{1}{80}}(-26a_1-26a_2+7) v_{77}
\\
&+{\scriptstyle \frac{1}{16800}}(3440a_1^2+16184a_1a_2+14504a_2^2+10766a_1+1978a_2-4347) v_{85}\\
&+{\scriptstyle \frac{1}{16800}}(-12040a_1^2-17944a_1a_2-4144a_2^2-5146a_1-7598a_2+4347) v_{88}
\\
v_{27 }&= {\scriptstyle \frac{1}{4}} v_{77}+{\scriptstyle \frac{1}{840}}(272a_1-1028a_2-189) v_{85}+{\scriptstyle \frac{1}{840}}(242a_1+682a_2+21) v_{88} \\
v_{28 }&= 
-{\scriptstyle \frac{1}{(a_1-a_2)}}{\scriptstyle \frac{1}{1680}} (-336 a_1-1176 a_2-2058) v_{74}\\
&-{\scriptstyle \frac{1}{(a_1-a_2)}}{\scriptstyle \frac{1}{1680}} (-302 a_1^2-908 a_1 a_2+1210 a_2^2-1778 a_1+518 a_2+525) v_{84}\\
&-{\scriptstyle \frac{1}{(a_1-a_2)}}{\scriptstyle \frac{1}{1680}} (380 a_1^2-2440 a_1 a_2+2060 a_2^2-3388 a_1-1148 a_2-1694) v_{107}
\end{align*}
\begin{align*}
\\
v_{33 }&= -{\scriptstyle \frac{1}{4}} v_{76}+{\scriptstyle \frac{1}{40}} (44a_1+44a_2+47) v_{99} 
\\
v_{34 }&= {\scriptstyle \frac{1}{4}}  v_{75}+ {\scriptstyle \frac{1}{40}} (44a_1+44a_2+47) v_{103}  
\\
v_{38 }&= 
-{\scriptstyle \frac{1}{40}}(23a_1+5a_2+17) v_{84}-{\scriptstyle \frac{1}{20}}(4a_1+4a_2+17) v_{107}
\\
v_{71 }&={\scriptstyle \frac{1}{(a_1-a_2)}}{\scriptstyle \frac{1}{240}} (228 a_1-12 a_2+294) v_{74}+ {\scriptstyle \frac{1}{(a_1-a_2)}}{\scriptstyle \frac{1}{240}} (110 a_1+70 a_2-75) v_{84}\\ 
&+ {\scriptstyle \frac{1}{(a_1-a_2)}}{\scriptstyle \frac{1}{240}} (324 a_1+324 a_2+242) v_{107}
\end{align*}
and the following equal to zero
\begin{gather*}
v_{36 },
v_{37 },
v_{64 },
v_{65 },
v_{68 },
v_{69 },
v_{70 },
v_{83 },
v_{87 }.
\end{gather*}
\end{lem}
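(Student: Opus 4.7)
The plan is to proceed exactly as in the computation of $C_0$, but one step further in the iterative procedure described in Section \ref{sec-bgg-how}(3). Starting from the explicit description of $C_0$ given in the preceding lemma (with $\epsilon=0$, $\gamma=1$, $v_{35}=0$ already substituted), I would take a general element $v\in C_0$ parametrized by the surviving free variables and compute $\Phi(e^p)v$ for $p=1,2,3,4$ using the formulae for $\Psi(e^p)$ tabulated after Proposition \ref{generic-CKT} together with $\rho\circ\alpha$ from Proposition \ref{cnormal}. By definition, $v$ lies in $C_1$ precisely when each $\Phi(e^p)v$ lies in $C_0+\mathbb{S}^{gen}$, i.e.\ the components of $\Phi(e^p)v$ in the complement of $C_0+\mathbb{S}^{gen}$ inside $\mathbb{V}$ all vanish. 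This produces a linear system in the free variables of $C_0$ whose solution set is $C_1$.

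The bookkeeping proceeds grading component by grading component. First I would handle the components of $\Phi(e^p)v$ living in $S^2\mathfrak{so}(2,4)_4$ and $S^2\mathfrak{so}(2,4)_3$, since the operator $\rho\circ\alpha$ plus the lowest-order pieces of $\Psi$ feed these slots directly; the conditions that these components lie in $\mathbb{S}^{gen}$ kill the highest-weight variables $v_{36},v_{37},v_{64},v_{65},v_{68},v_{69},v_{70},v_{83},v_{87}$, matching the vanishing list in the statement. Once those are set to zero, I would work downward through the gradings $S^2\mathfrak{so}(2,4)_2$ and $S^2\mathfrak{so}(2,4)_1$, which is where the formulae for $v_{10},v_{27},v_{28},v_{33},v_{34},v_{38},v_{71}$ arise: at each stage the equation for each component reduces, after the previous substitutions, to a linear relation expressing one dependent variable in terms of $v_{73},v_{74},v_{75},v_{76},v_{77},v_{84},v_{85},v_{88},v_{99},v_{103},v_{107}$.

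The one delicate point, and the main obstacle, is the occurrence of the denominator $a_1-a_2$ in the formulae for $v_{28}$ and $v_{71}$. This is precisely the phenomenon responsible for the case distinction we are in: among the equations obtained from $\Phi(e^2)v$ and $\Phi(e^3)v$ there appear relations of the form $(a_1-a_2)v_{28}=\cdots$ and $(a_1-a_2)v_{71}=\cdots$, and since we are working under the standing hypothesis $a_1\neq a_2$, division is legitimate and the stated closed-form expressions result. Care is needed to track which of the many components of $\Phi(e^p)v$ feed into these two equations (typically the $S^2\mathfrak{so}(2,4)_2$ slots labelled by indices $(1,j,k,l)$ with $j,k,l\in\{2,3,4\}$) and to check that, after these substitutions, no further independent equation survives: any residual conditions must be automatically satisfied, or would indicate an arithmetic error.

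Finally I would certify the result by a consistency check: substituting the closed-form expressions back into $\Phi(e^p)v$ for $p=1,2,3,4$ and verifying that the output indeed lies in $C_0+\mathbb{S}^{gen}$, i.e.\ that every component not listed among the free variables of $C_0+\mathbb{S}^{gen}$ vanishes. Since the whole computation is linear in the $v_i$ and polynomial in $a_1,a_2$, it is entirely algorithmic, and the bulk of it can be (and has been, cf.\ the Maple file \cite{bgg-comp}) delegated to a computer algebra system; the role of the human argument is only to organize the graded propagation and to justify the inversion of $a_1-a_2$.
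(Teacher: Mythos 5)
Your proposal is correct and follows essentially the same route as the paper: the paper defines $C_1$ exactly as the subspace of $C_0$ with $\Phi(\fk)C_1\subset C_0+\mathbb{S}^{gen}$, obtains the stated relations by solving the resulting linear system (dividing by $a_1-a_2$ under the standing hypothesis), and delegates the explicit elimination to the Maple file \cite{bgg-comp}. Your organization of the elimination by grading components and the final consistency check are sensible refinements of the same algorithmic computation.
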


To proceed further, we consider the subspace $C_{2,3}$ on $C_1$ such that $\Phi(\fk)C_{2,3}\subset  C_1+\mathbb{S}^{gen}$, $\Phi(\fk)\Phi(\fk)C_{2,3}\subset  C_1+\mathbb{S}^{gen}$ and $\Phi(\fk)\Phi(\fk)\Phi(\fk)C_{2,3}\subset  C_1+\mathbb{S}^{gen}.$ Considering a simpler system of linear equations would include some branches for the values of parameters that do not contain any solutions. This system can be separated into two subsystems:

\begin{enumerate}
\item The first subsystem depends on $v_{74},v_{84},v_{107}$ and does not admit a nontrivial solution for any values of the parameters, that is, $$v_{74}=0,\quad v_{84}=0,\quad v_{107}=0.$$

\item The second subsystem depends on the remaining variables and there is a distinguished case $a_1=-1,a_3=3 $ with solution  $$v_{75} = -{\scriptstyle \frac72} v_{103},\quad v_{73} = 18v_{85},\quad v_{77} = -6v_{85},\quad v_{88} = 5v_{85}.$$ In such a case, there is no problem with exponentiation, because we have fixed all the parameters and get the irreducible conformal Killing tensors from Proposition \ref{expicit-CKT}.
\end{enumerate}

\section{Killing tensors on conformally flat plane waves - Proof of Theorem \ref{main-fKT}}\label{sec-fKT}

We are in a situation  $a:=a_1=a_2$ and $\gamma$ does not appear in the definition of the metric. If $a=0$, then we are in the flat case and there are $50$ reducible Killing tensors. The following Proposition provides the formulae for the irreducible Killing tensors from Theorem \ref{main-fKT}. It is a direct computation to check that these tensors are Killing tensors. To show that they are irreducible and that we have all of them, we show that they come from a complement of the space of the reducible Killing tensors we described in Proposition \ref{reducible-prop}. As the corresponding conformal Killing tensors are all reducible, we express them as multiples of the (inverse) metric plus the corresponding reducible conformal Killing tensor.

\begin{prop}\label{expl-fKT}
The homogeneous plane waves $g_{0,\pm 1,\pm 1,0}$ have one-dimensional spaces of irreducible Killing tensors
$$(z_1\partial_{z_1}+z_2\partial_{z_2}+2x_-\partial_{x_-})\partial_{x_-}-x_+g_{0,\pm 1,\pm 1,0}^{-1}.$$
The homogeneous plane wave $g_{1,\frac34,\frac34,0}$ has one-dimensional space of irreducible Killing tensors 
\begin{gather*}
\Big(z_1\partial_{z_1}+
z_2\partial_{z_2}+4x_-\partial_{x_-}
-2x_+\partial_{x_+}\Big)\Big(\frac{z_1\partial_{z_1}+z_2\partial_{z_2}-2x_+\partial_{x_+}}{x_+^2}+\frac{z_1^2+z_2^2}{x_+^3}\partial_{x_-}\Big)\\
-\Big(\frac{z_1^2+z_2^2}{x_+^2}-\frac{4x_-}{x_+}\Big)g_{1,\frac34,\frac34,0}^{-1}.
\end{gather*}

The homogeneous plane wave $g_{1,-\frac3{16},-\frac3{16},0}$ has a six-dimensional space of irreducible Killing tensors
\begin{gather*}
\Big( c_3 (x_+^{\frac14}\partial_{z_1} -\frac{z_1}{4x_+^{\frac34}}\partial_{x_+})
-2(c_1-c_5)(x_+^{\frac14}\partial_{z_2} -\frac{z_2}{4x_+^{\frac34}}\partial_{x_+})\Big)\Big(x_+^{\frac14}z_2\partial_{x_-}+\frac{z_1z_2}{4x_+^{\frac34}}\partial_{z_1}
\\
-(x_-x_+^\frac14+\frac{z_1^2-z_2^2}{8x_+^{\frac34}})\partial_{z_2}+z_2(\frac{x_-}{4x_+^{\frac34}}+\frac{3(z_1^2+z_2^2)}{32x_+^{\frac74}})\partial_{x_+}\Big)+\Big( 2c_1 (x_+^{\frac14}\partial_{z_1} -\frac{z_1}{4x_+^{\frac34}}\partial_{x_+})
\\
+c_3(x_+^{\frac14}\partial_{z_2} -\frac{z_2}{4x_+^{\frac34}}\partial_{x_+})\Big)\Big(x_+^{\frac14}z_1\partial_{x_-}-(x_-x_+^\frac14+\frac{-z_1^2+z_2^2}{8x_+^{\frac34}})\partial_{z_1}+\frac{z_1z_2}{4x_+^{\frac34}}\partial_{z_2}
\\
+z_1(\frac{x_-}{4x_+^{\frac34}}+\frac{3(z_1^2+z_2^2)}{32x_+^{\frac74}})\partial_{x_+}\Big) 
+c_4\Big(x_+^{\frac12}\partial_{x_-}+\frac{z_1}{4x_+^{\frac12}}\partial_{z_1}+\frac{z_2}{4x_+^{\frac12}}\partial_{z_2}+\frac{z_1^2+z_2^2}{16x_+^{\frac32}}\partial_{x_+}\Big)^2
\\
+c_2\Big(x_+^{\frac12}\partial_{x_-}+\frac{z_1}{4x_+^{\frac12}}\partial_{z_1}+\frac{z_2}{4x_+^{\frac12}}\partial_{z_2}+\frac{z_1^2+z_2^2}{16x_+^{\frac32}}\partial_{x_+}\Big)\Big(x_+^{\frac14}\partial_{z_1} -\frac{z_1}{4x_+^{\frac34}}\partial_{x_+}\Big)
\\
+c_6\Big(x_+^{\frac12}\partial_{x_-}+\frac{z_1}{4x_+^{\frac12}}\partial_{z_1}+\frac{z_2}{4x_+^{\frac12}}\partial_{z_2}+\frac{z_1^2+z_2^2}{16x_+^{\frac32}}\partial_{x_+}\Big)\Big(x_+^{\frac14}\partial_{z_2} -\frac{z_2}{4x_+^{\frac34}}\partial_{x_+}\Big)
\\
-\Big( \frac{c_1 (z_1^2-z_2^2) }{4x_+^{\frac12}}+\frac{    c_2  z_1}{4x_+^\frac14}+\frac{ c_3 z_1 z_2 }{4x_+}+ c_4 (\frac{ x_- x_+^{\frac12} }{2}+\frac{  z_1^2+ z_2^2 }{2x_+^{\frac12}})+\frac{c_5 z_2^2 }{4x_+^{\frac12}}+\frac{  c_6  z_2}{4x_+^\frac14}\Big)g_{1,-\frac3{16},-\frac3{16},0}^{-1}.
\end{gather*}
\end{prop}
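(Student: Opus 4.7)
The plan is to apply the BGG framework of Section \ref{sec-bgg-how} to the projective BGG operator for Killing tensors on the conformally flat homogeneous plane waves, where $a_1 = a_2 = a$. Following Proposition \ref{pnormal}, the relevant representation is $\rho : \mathfrak{sl}(5,\R) \to \mathfrak{gl}(\mathbb{V})$ where $\mathbb{V}$ is the Cartan component of $S^2 \mathfrak{sl}(5,\R)$ corresponding to Killing tensors, and the prolongation connection decomposes as $\Phi = \rho \circ \beta + \Psi$, with $\Psi$ determined by the algorithm of \cite[Section 3.3]{bgg-my} and verified via $\partial^* R^\Phi = 0$. The explicit matrix form of $\Psi$ is analogous to the one displayed for the conformal case in Section \ref{sec-CKT} and is recorded in the Maple file \cite{bgg-comp}.

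First I would compute the generic invariant subspace $\mathbb{S}^{gen}$ annihilated by the curvature $R^\Phi$; by Proposition \ref{generic-fKT} this has dimension matching that of the reducible Killing tensors listed in Proposition \ref{reducible-prop} (namely $27$ if $\epsilon = 0$ and $28$ if $\epsilon = 1$), so for generic values of $a$ every Killing tensor is reducible. To detect irreducible ones I restrict to a complement $C$ of $\mathbb{S}^{gen}$ in $\mathbb{V}$, reparametrize $C$ by fresh variables $v_i$, and iteratively compute the subspaces $C \supset C_0 \supset C_1 \supset \cdots$ defined by $R^\Phi(C_0) \subset \mathbb{S}^{gen}$ and $\Phi(\fk) C_{i+1} \subset C_i + \mathbb{S}^{gen}$, mirroring the strategy used in Section \ref{sec-CKT} for conformal Killing tensors. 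The linear equations cutting out these subspaces depend polynomially on $a$ and $\epsilon$, so their compatibility produces a finite list of candidate parameter values.

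Working through these branches should single out precisely the distinguished values in the statement: in the case $\epsilon = 0$ the subsystem should force $a^2 = 1$, yielding a one-dimensional space at $a = \pm 1$; in the case $\epsilon = 1$ the analogous subsystem should have roots $a = \tfrac{3}{4}$ (with a one-dimensional space) and $a = -\tfrac{3}{16}$ (with a six-dimensional space of new solutions). At each such parameter value every free coefficient is fixed, so the exponential formula $v \mapsto \pi\bigl(\exp(-\Phi(x_4 k^4))\exp(-\Phi(x_2 k^2 + x_3 k^3))\exp(-\Phi(x_1 k^1))v\bigr)$ yields the solution in the left-invariant frame $e^1,\dots,e^4$ of Lemma \ref{c-frame}, and the coordinate transformations recorded in Section 2 recast them in Brinkmann coordinates as displayed in the statement. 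Since Proposition \ref{generic-fKT} already exhausts all reducible Killing tensors, any solution produced from $C$ is automatically irreducible.

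The main obstacle will be the case $\epsilon = 1$, $a = -\tfrac{3}{16}$, where six independent irreducible Killing tensors appear simultaneously: one must verify linear independence modulo the $28$-dimensional reducible space and keep track of the fractional powers of $x_+$ introduced by the exponential-to-Brinkmann change of variables $x_1 = \ln(x_+)$, which makes the bookkeeping delicate. As an independent sanity check, each tensor in the statement can be substituted directly into $\nabla_{(a} k_{bc)} = 0$; this is a routine symbolic computation carried out in \cite{bgg-comp} and confirms the results of \cite{KO}, including the case discarded in \cite[Proof of Theorem 2]{KO}.
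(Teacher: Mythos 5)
Your overall strategy (prolongation connection $\Phi=\rho\circ\beta+\Psi$, generic invariant subspace $\mathbb{S}^{gen}$, then branching on a complement $C$) is the one the paper uses, but there is a genuine gap in how you treat the case $\epsilon=0$. You assert that $\mathbb{S}^{gen}$ has dimension $27$ for $\epsilon=0$ and $28$ for $\epsilon=1$, matching the reducible counts of Proposition \ref{reducible-prop}, so that ``for generic values of $a$ every Killing tensor is reducible,'' and you then expect the value $a=\pm1$ to drop out of the compatibility conditions on the complement $C$. This cannot work: for $\epsilon=0$ every conformally flat, non-flat homogeneous plane wave is isometric to $g_{0,\pm1,\pm1,0}$ via the rescaling $g_{0,a,a,0}\simeq g_{0,\lambda^2a,\lambda^2a,0}$, so $a=\pm1$ is merely a normalization, not a special parameter value, and no branching condition can single it out. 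What actually happens (Proposition \ref{generic-fKT}) is that $\mathbb{S}^{gen}$ is $28$-dimensional for \emph{both} values of $\epsilon$; since only $27$ Killing tensors are reducible when $\epsilon=0$ and $a\neq 0$, the one-dimensional space of irreducible Killing tensors on $g_{0,\pm1,\pm1,0}$ already sits inside $\mathbb{S}^{gen}$ and is identified by comparison with Proposition \ref{reducible-prop}, while the complement analysis for $\epsilon=0$, $a=\pm1$ produces no additional solutions. As written, your plan searches for this family only in $C$ and would therefore miss it entirely.

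A smaller inaccuracy: the tractor representation for projective Killing $2$-tensors is the Cartan component of $S^2\Lambda^2(\mathbb{R}^5)^*$ (the $50$-dimensional piece of the $55$-dimensional $S^2\Lambda^2(\mathbb{R}^5)^*$), not of $S^2\mathfrak{sl}(5,\mathbb{R})$; the identification of $\Lambda^2$ of the standard tractor bundle with the adjoint representation is special to the conformal case $\mathfrak{so}(2,4)\cong\Lambda^2\mathbb{R}^6$ and does not carry over to $\mathfrak{sl}(5,\mathbb{R})$. Your description of the $\epsilon=1$ branches ($a=\tfrac34$ giving one new solution, $a=-\tfrac3{16}$ giving six, all other values nothing) does match the paper's analysis, as does the final remark that irreducibility is automatic for anything found outside the reducible space and that a direct check of $\nabla_{(a}k_{bc)}=0$ serves as verification.
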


We know from \cite{GL,bgg-my} that the representation $\rho: \mathfrak{sl}(5,\mathbb{R})\to \mathfrak{gl}(\mathbb{V})$ is the Cartan component $\mathbb{V}\subset S^2 \Lambda^2 (\mathbb{R}^5)^*.$ The space $S^2 \Lambda^2 (\mathbb{R}^5)^*$ is of dimension $55$,  corresponds to positions $(i,j,k,l)$ for $i<j,k<l,(i,j)\leq (k,l)$, and is embedded in $\otimes^4 (\mathbb{R}^5)^*$ via the symmetries of the tensors in $S^2 \Lambda^2 (\mathbb{R}^5)^*$. Let us recall that $S^2 \Lambda^2 (\mathbb{R}^5)^*$ decomposes into distinguished eigenspaces for the $\rho$-action of the grading element in $\mathfrak{sl}(5,\mathbb{R})$, \cite{parabook}, with eigenvalues shifted to $0,\dots,2$ according our conventions. We order the positions $(i,j,k,l)$ in the following way that describes this grading explicitly:

\begin{enumerate}
\item Firstly, we order 
\begin{itemize}
    \item[(i)]
$(1,j,1,l)$, $1<j\leq l$ with variables $w_1,\dots,w_{10}$
\end{itemize}
corresponding to the highest grading component $\mathbb{X}=S^2 \Lambda^2 (\mathbb{R}^5)^*_0$. The projection $\pi: S^2 \Lambda^2 (\mathbb{R}^5)^*\to S^2\fc$ is $\pi(v)=v_{1,j,1,l}e^{j-1}\otimes e^{l-1}.$
\item Next, we order 
\begin{itemize}
\item[(ii)] $(1,j,k,l)$, $1<j,k,l$ with variables $w_{11},\dots,w_{34}$
\end{itemize}
corresponding to the grading component $S^2 \Lambda^2 (\mathbb{R}^5)^*_1$.
\item Finally, we order 
\begin{itemize}
\item[(iii)] $(i,j,k,l)$, $1<i,j,k,l$ with variables $w_{35},\dots,w_{55}$
\end{itemize}
corresponding to the grading component $S^2 \Lambda^2 (\mathbb{R}^5)^*_2$.
\end{enumerate}

The Cartan component $\mathbb{V}$ is $50$-dimensional, and we describe it (using the Casimir element) as the subspace of $S^2 \Lambda^2 (\mathbb{R}^5)^*$ with the following $50$ free variables 
$$w_1 \dots, w_{21}, w_{24},\dots, w_{28}, w_{31},\dots, w_{43},w_{44},\dots, w_{55}$$
and the dependent:
\begin{multicols}{3}
\begin{align*}
w_{22}&=-w_{32}+w_{27}\\
w_{23}&=w_{18}-w_{14}
\end{align*}
\begin{align*}\\
w_{29}&=w_{19}-w_{15} \\
w_{30}&=w_{25}-w_{w16}
\end{align*}
\begin{align*}\\
w_{44}&=w_{47}+w_{40}
\end{align*}
\end{multicols}
As described in Section \ref{ss-sec}, we find the prolongation connection $\Phi.$ The representation $\rho$ is induced by the standard action on $\otimes^4 (\mathbb{R}^5)^*$ and the map $\beta$ from Proposition \ref{pnormal}. The components $\Psi$ of the prolongation connection are as follows (a direct check that these formulae are correct can also be found in the Maple file \cite{bgg-comp} accompanying the arXiv submission):

\begin{multicols}{2}
\begin{align*}
\Psi(e^1)_{11} &= -{\scriptstyle \frac{1}{3}} aw_2  \\
\Psi(e^1)_{12} &= -{\scriptstyle \frac{1}{3}} aw_3  
\\
\Psi(e^1)_{13} &= {\scriptstyle \frac{2}{3}} aw_4 
\\
\Psi(e^1)_{15} &= {\scriptstyle \frac{1}{3}} a w_7 
\\
\Psi(e^1)_{16} &= {\scriptstyle \frac{1}{3}} a w_9 
\\
\Psi(e^1)_{17} &= {\scriptstyle \frac{1}{6}} a \left( 3 w_4 - w_5 \right) \\
\Psi(e^1)_{18} &= -{\scriptstyle \frac{1}{6}} a w_6  \\
\Psi(e^1)_{19} &= {\scriptstyle \frac{1}{4}} a w_7 \\
\Psi(e^1)_{20} &= -{\scriptstyle \frac{1}{4}} a w_9 \\
\Psi(e^1)_{21} &= -{\scriptstyle \frac{1}{4}} a w_{10} \\
\Psi(e^1)_{24} &= {\scriptstyle \frac{1}{6}} a \left( 3 w_4 - w_8 \right) \\
\Psi(e^1)_{25} &= {\scriptstyle \frac{1}{4}} a w_9 \\
\Psi(e^1)_{26} &= {\scriptstyle \frac{1}{4}} a w_7 \\
\Psi(e^1)_{28} &= -{\scriptstyle \frac{1}{4}} a w_{10} \\
\Psi(e^1)_{31} &= {\scriptstyle \frac{1}{3}} a w_{10} \\
\Psi(e^1)_{35} &= -a \left(  \epsilon (w_4 - {\scriptstyle \frac{1}{3}} w_5)- {\scriptstyle \frac{3}{2}} w_{13} + w_{17} \right) \\
\Psi(e^1)_{36} &= {\scriptstyle \frac{1}{6}} a \left( 2 \epsilon w_6 + 3 w_{14} - 6 w_{18} \right) \\
\Psi(e^1)_{37} &= -{\scriptstyle \frac{1}{12}}a(2\epsilon w_9+21w_{16}-6w_{25})\\
\Psi(e^1)_{38} &= {\scriptstyle \frac{1}{12}} a \left( 2 \epsilon w_9 - 11 w_{16} - 4 w_{20} - 2 w_{25} \right) \\
\Psi(e^1)_{39} &= {\scriptstyle \frac{1}{6}} a \left( \epsilon w_{10} - 4 w_{21} - w_{31} \right) \\
\Psi(e^1)_{40} &= -{\scriptstyle \frac{2}{3}} a w_{27} \\
\Psi(e^1)_{41} &= -a \left(  \epsilon (w_4 - {\scriptstyle \frac{1}{3}} w_8) - {\scriptstyle \frac{3}{2}} w_{13} + w_{24} \right) \\
\Psi(e^1)_{42} &=  -{\scriptstyle \frac{1}{12}}a(2\epsilon w_9+21w_{16}-6w_{25})\\
\Psi(e^1)_{43} &= -{\scriptstyle \frac{1}{12}} a \left( 2\epsilon w_7 - 11 w_{15} - 2w_{19} + 4 w_{26} \right) \\
\Psi(e^1)_{45} &= {\scriptstyle \frac{1}{6}} a \left( \epsilon w_{10} - 4 w_{28} - w_{31} \right) \\
\Psi(e^1)_{46} &= -{\scriptstyle \frac{2}{3}} a \left( \epsilon w_{10} - 3 w_{31} \right) \\
\Psi(e^1)_{47} &= {\scriptstyle \frac{2}{3}} a w_{32} \\
\Psi(e^1)_{48} &= a w_{33} \\
\Psi(e^1)_{49} &= a w_{34} \\
\Psi(e^1)_{50} &= {\scriptstyle \frac{2}{3}} a \left( w_{21} + w_{28} \right) \\
\Psi(e^1)_{51} &= {\scriptstyle \frac{1}{3}} a w_{34} \\
\Psi(e^1)_{52} &= -{\scriptstyle \frac{1}{3}} a w_{33} 
\\ & \\
\Psi(e^2)_{11} &= {\scriptstyle \frac{1}{6}} a \left( 3 w_4 - w_5 \right) \\
\Psi(e^2)_{12} &= -{\scriptstyle \frac{1}{6}} w_6 a \\
\Psi(e^2)_{13} &= {\scriptstyle \frac{1}{12}} a w_7 \\
\Psi(e^2)_{14} &= -{\scriptstyle \frac{1}{12}} a w_9 
\end{align*}
\begin{align*} \\
\Psi(e^2)_{15} &= -{\scriptstyle \frac{1}{12}} a w_{10} \\
\Psi(e^2)_{17} &= a w_7 
\\
\Psi(e^2)_{18} &= {\scriptstyle \frac{1}{3}} a w_9 \\
\Psi(e^2)_{19} &= {\scriptstyle \frac{1}{3}} a w_{10} \\
\Psi(e^2)_{24} &= {\scriptstyle \frac{1}{4}} a w_7 \\
\Psi(e^2)_{35} &= -{\scriptstyle \frac{1}{2}} a \left( 4 \epsilon w_7 + 6 w_{15} - 9 w_{19} \right) \\
\Psi(e^2)_{36} &= -{\scriptstyle \frac{1}{6}} a \left( 4 \epsilon w_9 + 8 w_{16} + w_{20} - 10 w_{25} \right) \\
\Psi(e^2)_{37} &= -{\scriptstyle \frac{1}{12}} a \left( 8 \epsilon w_{10} + 7 w_{21} - 20 w_{31}\right) \\
\Psi(e^2)_{38} &= -{\scriptstyle \frac{3}{4}} a \left( w_{27} - 3 w_{32} \right) \\
\Psi(e^2)_{39} &= {\scriptstyle \frac{3}{2}} a w_{33} \\
\Psi(e^2)_{40} &= a w_{34} \\
\Psi(e^2)_{41} &= -{\scriptstyle \frac{1}{6}} a \left( 4 \epsilon w_7 + 2 w_{15} - 7 w_{19} + 2 w_{26} \right) \\
\Psi(e^2)_{42} &= -{\scriptstyle \frac{1}{12}} a \left( 7 w_{27} - 5 w_{32} \right) \\
\Psi(e^2)_{43} &= {\scriptstyle \frac{1}{12}} a \left( 5 w_{21} - 4 w_{28} \right) \\
\Psi(e^2)_{45} &= {\scriptstyle \frac{1}{6}} a w_{33} \\
\Psi(e^2)_{47} &= -{\scriptstyle \frac{2}{3}} a w_{34} \\
&\\
\Psi(e^3)_{11} &= -{\scriptstyle \frac{1}{6}}a w_6  \\
\Psi(e^3)_{12} &= {\scriptstyle \frac{1}{6}} a \left( 3 w_4 - w_8 \right) \\
\Psi(e^3)_{13} &= {\scriptstyle \frac{1}{12}} a w_9 \\
\Psi(e^3)_{14} &= {\scriptstyle \frac{1}{12}} a w_7 \\
\Psi(e^3)_{16} &= -{\scriptstyle \frac{1}{12}} a w_{10} \\
\Psi(e^3)_{17} &= {\scriptstyle \frac{1}{4}} a w_9 \\
\Psi(e^3)_{18} &= {\scriptstyle \frac{5}{12}} a w_7 \\
\Psi(e^3)_{24} &= a w_9 \\
\Psi(e^3)_{25} &= {\scriptstyle \frac{1}{3}} a w_{10} \\
\Psi(e^3)_{35} &= -{\scriptstyle \frac{1}{6}} a \left( 4 \epsilon w_9 + 2 w_{16} - 2 w_{20} - 7 w_{25} \right) \\
\Psi(e^3)_{36} &= -{\scriptstyle \frac{1}{6}} a \left( 4 \epsilon w_7 + 8 w_{15} - 10 w_{19} - w_{26} \right) \\
\Psi(e^3)_{37} &= -{\scriptstyle \frac{1}{12}} a \left( 7 w_{27} - 2 w_{32} \right) \\
\Psi(e^3)_{38} &= {\scriptstyle \frac{1}{12}} a \left( 4 w_{21} - 5 w_{28} \right) \\
\Psi(e^3)_{39} &= {\scriptstyle \frac{1}{6}} a w_{34} \\
\Psi(e^3)_{40} &= {\scriptstyle \frac{1}{3}} a w_{33} \\
\Psi(e^3)_{41} &= -{\scriptstyle \frac{1}{2}} a \left( 4 \epsilon w_9 + 6 w_{16} - 9 w_{25} \right) \\
\Psi(e^3)_{42} &= -{\scriptstyle \frac{1}{12}} a \left( 8 \epsilon w_{10} + 7 w_{28} - 20 w_{31} \right) \\
\Psi(e^3)_{43} &= {\scriptstyle \frac{3}{4}} a \left( w_{27} + 2 w_{32} \right) \\
\Psi(e^3)_{45} &= {\scriptstyle \frac{3}{2}} a w_{34} \\
\Psi(e^3)_{47} &= {\scriptstyle \frac{2}{3}} a w_{33} 
\end{align*}
\end{multicols}
\begin{multicols}{2}
\begin{align*}
\Psi(e^4)_{11} &= {\scriptstyle \frac{1}{12}} a w_7 \\
\Psi(e^4)_{12} &= {\scriptstyle \frac{1}{12}} a w_9 \\
\Psi(e^4)_{13} &= {\scriptstyle \frac{1}{3}} a w_{10} 
\\
\Psi(e^4)_{17} &= {\scriptstyle \frac{1}{4}} a w_{10} \\
\Psi(e^4)_{24} &= {\scriptstyle \frac{1}{4}} a w_{10} \\
\Psi(e^4)_{35} &= -{\scriptstyle \frac{1}{6}} a \left( 4 \epsilon w_{10} + 2 w_{21} - 7 w_{31} \right) 
\end{align*}
\begin{align*}\\
\Psi(e^4)_{36} &= -{\scriptstyle \frac{1}{6}} a \left( 2 w_{27} - w_{32} \right) \\
\Psi(e^4)_{37} &= -{\scriptstyle \frac{3}{4}} a w_{33} 
\\
\Psi(e^4)_{38} &= -{\scriptstyle \frac{7}{12}} a w_{34} \\
\Psi(e^4)_{41} &= -{\scriptstyle \frac{1}{6}} a \left( 4 \epsilon w_{10} + 2 w_{28} - 7 w_{31} \right) \\
\Psi(e^4)_{42} &= -{\scriptstyle \frac{3}{4}} a w_{34} \\
\Psi(e^4)_{43} &= {\scriptstyle \frac{7}{12}} a w_{33} 
\end{align*}
\end{multicols}
Let us now write down the results of the third step from Section \ref{ts-cec}, starting with the generic solutions. Moreover, this allows us to show that generically, most of the Killing tensors are reducible.

\begin{prop}\label{generic-fKT}
If  $a\neq 0$,  then, generically, 
\begin{enumerate}
\item there is $28$-dimensional space of Killing tensors, which are all reducible for $\epsilon\neq 0$, or 
\item there are irreducible Killing tensors from Proposition \ref{expl-fKT} for $\epsilon= 0$.
\end{enumerate}
\end{prop}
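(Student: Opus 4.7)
The plan is to carry out the iterative procedure from Section \ref{ts-cec} on the explicit prolongation connection $\Phi=\rho\circ \beta+\Psi$ determined above. Since $a_1=a_2=a$ and $\gamma$ does not enter the metric in the conformally flat case, the components of $\Psi$ are linear in $a$ and $\epsilon$, so the curvature $R^\Phi(e^i,e^j)=[\Phi(e^i),\Phi(e^j)]-\Phi([e^i,e^j])$ has entries polynomial in $a$ and $\epsilon$ acting on the $50$-dimensional space $\mathbb{V}\subset S^2\Lambda^2(\mathbb{R}^5)^*$.

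First I would determine $\mathbb{S}^0\subset \mathbb{V}$ by imposing $R^\Phi(s)=0$ for all $s\in \mathbb{S}^0$, which is a linear system in the coordinates $w_1,\dots,w_{55}$ with coefficients in $\mathbb{Q}[a,\epsilon]$. Then I iterate the closure condition $\Phi(e^j)\mathbb{S}^{i+1}\subset \mathbb{S}^i$ for $j=1,\dots,4$ until it stabilizes, which must happen after finitely many steps since $\mathbb{V}$ is finite-dimensional; the resulting subspace is $\mathbb{S}^{gen}$. For generic $a\neq 0$ the rank of this system is constant: its dimension can only jump on a proper algebraic subvariety of the parameter space, and such degenerations are handled separately in the non-generic analysis to follow. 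This step is a mechanical exercise in linear algebra, carried out in the Maple worksheet \cite{bgg-comp}.

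Once $\dim \mathbb{S}^{gen}$ has been determined, I would split according to $\epsilon$ and compare with Proposition \ref{reducible-prop}. For $\epsilon\neq 0$ and $a_1=a_2\neq 0$, part (6) of that proposition yields exactly $28$ reducible Killing tensors, so the $28$-dimensional generic space can contain no irreducible element, which proves claim (1). For $\epsilon=0$ and $a_1=a_2\neq 0$, part (4) gives only $27$ reducible tensors, so any extra generic solution is automatically irreducible. The isometry $g_{0,a,a,0}\simeq g_{0,\lambda^2 a,\lambda^2 a,0}$ with $\lambda^2=|a|^{-1}$ reduces the problem to $a=\pm 1$, where the explicit Killing tensor of Proposition \ref{expl-fKT} provides the missing direction; irreducibility is confirmed by checking that its $2$-jet at the origin is not in the span of the jets of reducible tensors computed in the proof of Proposition \ref{reducible-prop}.

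The main obstacle is the bookkeeping: the iterated system mixes $a$ and $\epsilon$ and the stable subspace is characterized by a web of relations among the $w_i$. Organising the computation along the grading of $\mathbb{V}$ into components of degrees $0,1,2$ keeps it tractable and isolates the distinguished equations that detect the extra irreducible direction in the $\epsilon=0$ branch. Subtleties from the division by the parameters $a$ and $\epsilon$ are absent here because we work on the complement of $\{a=0\}$, so once the linear-algebraic step is carried out the dimension count and the comparison with the reducible count are straightforward.
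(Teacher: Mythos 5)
Your proposal is correct and follows essentially the same route as the paper: compute the generic $\Phi$-invariant subspace $\mathbb{S}^{gen}$ annihilated by $R^{\Phi}$ by the iterative linear-algebra procedure (delegated to the Maple file), find that it is $28$-dimensional, and then compare with the reducible counts of Proposition \ref{reducible-prop} — $28$ for $\epsilon\neq 0$ versus $27$ for $\epsilon=0$ — to conclude reducibility in the first case and a one-dimensional irreducible remainder, realized by Proposition \ref{expl-fKT}, in the second. Your additional remarks (semicontinuity of the rank in the parameters, the isometry reducing to $a=\pm1$, and the jet check for irreducibility) are consistent elaborations of the paper's ``dimensional and continuity reasons'' argument rather than a different method.
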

\begin{proof}
The generic $\Phi$-invariant subspace $\mathbb{S}^{gen}$ annihilated by the curvature $R^{\Phi}$ of $\Phi$ can be directly computed, see the Maple file \cite{bgg-comp}. The resulting subset $\mathbb{S}^{gen}\subset \mathbb{V}$ provides a $ 28$-dimensional space of  Killing tensors. We have the following $28$ free variables
\begin{gather*} w_{1},\dots,w_{14},w_{17},\dots,w_{20},w_{24},\dots,w_{27},w_{35},w_{36},w_{38},w_{41},w_{43},w_{50}
\end{gather*}
and the following dependent, where missing variables are either vanishing or are determined by the equations for $\mathbb{V}:$
\begin{multicols}{3}
\begin{align*}
w_{15}&=\epsilon w_7-w_{19}\\
w_{16}&=\epsilon w_9-w_{25}\\
w_{31}&=\epsilon w_{10}\\
w_{32}&=2 w_{27}
\end{align*}
\begin{align*}
\\
w_{37}&=-2\epsilon^2w_7+3\epsilon w_{19}-{\scriptstyle \frac{1}{12}}aw_7\\
w_{39}&=-{\scriptstyle \frac14} aw_{10}\\
w_{40}&=-\epsilon w_{27}\\
w_{42}&=-2\epsilon^2w_9+3\epsilon w_{25}-{\scriptstyle \frac{1}{12}}aw_9
\end{align*}
\begin{align*}
\\
w_{45}&=-{\scriptstyle \frac14}aw_{10}\\
w_{46}&=-{\scriptstyle \frac13}(-3\epsilon^2+a)w_{10}\\
w_{47}&=2\epsilon w_{27}
\end{align*}
\end{multicols}

Comparing this with Proposition \ref{reducible-prop}, we see that these are reducible for dimensional and continuity reasons in the case $\epsilon\neq 0$. Otherwise, there are irreducible Killing tensors from Proposition \ref{expl-fKT} for $\epsilon= 0$.
\end{proof}

In Proposition \ref{generic-fKT}, we exhausted all reducible Killing tensors. Therefore, the additional Killing tensors that may appear for special values of the parameters are irreducible. To finish the proof of Theorem \ref{main-fKT}, we restrict ourselves to complement $C$ of $\mathbb{S}^{gen}$ in $\mathbb{V}$. 
From now on, we parametrize $\mathbb{V}$ by variables $v_i$, and we define the complement $C$ by the following equations:
\begin{align*}
v_{1}&=0,\dots,v_{14}=0,v_{17}=0,\dots,v_{20}=0,v_{24}=0,\dots,v_{27}=0,\\
v_{35}&=0,v_{36}=0,v_{38}=0,v_{41}=0,v_{43}=0,v_{50}=0.
\end{align*}
We first find the subspace of $C$ annihilated by the curvature $R^{\Phi}$ of the prolongation connection $\Phi$ restricted to $C$, i.e., the subspace $C_0\subset C$ such that $R^{\Phi}(C_0)\subset \mathbb{S}^{gen}$. As this is an overdetermined system of equations which, except for two distinguished equations, can be solved independently of the parameter values, we present only the final result.

\begin{lem}
If  $a\neq 0$,  then the subspace $C_0$ of $C$ such that $R^{\Phi}(C_0)\subset \mathbb{S}^{gen}$ is given by
\begin{multicols}{2}
\begin{align*}
    v_{37} &= -{\scriptstyle \frac{5}{4}}\epsilon v_{15} \\
    v_{39} &= {\scriptstyle \frac{1}{2}}\epsilon v_{21} - {\scriptstyle \frac{7}{4}}\epsilon v_{31} + v_{46} 
    \end{align*}
    \begin{align*} \\
    v_{40} &= -{\scriptstyle \frac{3}{4}}\epsilon v_{32} 
   \\
    v_{42} &= -{\scriptstyle \frac{5}{4}}\epsilon v_{16}
     \end{align*}
     \end{multicols}
     \begin{multicols}{2}
     \begin{align*}
    v_{45} &= {\scriptstyle \frac{1}{2}}\epsilon v_{28}  - {\scriptstyle \frac{7}{4}}\epsilon v_{31} + v_{46} \\
    v_{47} &= \epsilon v_{32} 
    \end{align*}
    \begin{align*}
      \\
    v_{48} &=  \epsilon v_{33} \\
    v_{49} &= \epsilon  v_{34} 
\end{align*}
\end{multicols}
together with the distinguished condition $\epsilon v_{33}=0, \epsilon v_{34} = 0$,
and the following equal to zero
\begin{gather*}
    v_{51 }, v_{52 }, v_{53 }, v_{54 }, v_{55 }.
\end{gather*}
\end{lem}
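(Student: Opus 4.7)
The plan is to compute the curvature $R^{\Phi}(X,Y) = [\Phi(X),\Phi(Y)] - \Phi([X,Y])$ for $X,Y \in \fk$ using the explicit formulas for $\Phi = \rho \circ \beta + \Psi$ given by Proposition \ref{pnormal} and the preceding tables for $\Psi$, and then impose the condition $R^{\Phi}(v) \in \mathbb{S}^{gen}$ for $v$ ranging over $C$. Because the curvature is bilinear in $X,Y$, it suffices to work with the $15$ operators $R^{\Phi}(k^i,k^j)$ for $1 \leq i < j \leq 6$ applied to a general $v \in C$ parametrized by the coordinates $v_{15}, v_{16}, v_{21}, v_{28}, v_{31}, \ldots, v_{55}$ left after setting the generic coordinates to zero.

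The first step is to evaluate each $R^{\Phi}(k^i,k^j)v$ in the basis of $\mathbb{V}$ and project it onto the complement $C$ modulo $\mathbb{S}^{gen}$. This yields a large linear system in the remaining coordinates $v_i$, whose coefficients are polynomials in the parameters $\epsilon$ and $a$. The algorithm is exactly the one summarized in Section \ref{ts-cec}; since we only require $R^{\Phi}(v) \subset \mathbb{S}^{gen}$ (rather than the full iterative closure), this is a single linear system, which I would process in the Maple file \cite{bgg-comp}. I would first solve the equations whose coefficients are nonvanishing for all $\epsilon, a$ (with $a \neq 0$), which produces the bulk of the relations displayed in the statement — namely the expressions for $v_{37}, v_{39}, v_{40}, v_{42}, v_{45}, v_{47}, v_{48}, v_{49}$ and the vanishing of $v_{51}, \ldots, v_{55}$.

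The key step, and the only place where branching is unavoidable, is to isolate the residual equations whose coefficients factor through $\epsilon$: these yield the two distinguished conditions $\epsilon v_{33} = 0$ and $\epsilon v_{34} = 0$. All other equations either reduce to one of the substitutions above or are automatically satisfied modulo them. Concretely, one checks that after substituting the non-distinguished relations back into $R^{\Phi}(v)$, the residue in $C$ consists precisely of multiples of $\epsilon v_{33}$ and $\epsilon v_{34}$; this identifies them as the obstructions and shows that no further conditions arise (this also reflects the symmetry $a_1 = a_2 = a$, under which $v_{33}$ and $v_{34}$ are conjugate).

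The main obstacle is organisational rather than conceptual: the expressions for $\Psi$ are lengthy and the linear system has many redundancies, so one must keep careful track of which relations are implied by which, especially to confirm that the only parameter-dependent conditions are the two distinguished equations and not, say, additional branches in the parameter $a$. The verification is best carried out by symbolic computation, and correctness is confirmed by checking that the substitutions and the two distinguished conditions together make $R^{\Phi}(v) \in \mathbb{S}^{gen}$ hold identically on $C_0$, as recorded in \cite{bgg-comp}.
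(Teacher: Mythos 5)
Your proposal takes essentially the same route as the paper: the paper also obtains $C_0$ by imposing $R^{\Phi}(v)\in\mathbb{S}^{gen}$ as a single overdetermined linear system on $C$, solving the equations that are independent of the parameter values to get the displayed substitutions and vanishings, and isolating the two residual $\epsilon$-dependent relations $\epsilon v_{33}=0$, $\epsilon v_{34}=0$ as the distinguished conditions, with the symbolic computation delegated to the accompanying Maple file \cite{bgg-comp}. Your account is a correct expansion of what the paper leaves implicit.
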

Considering the distinguished equation for $C_0$, we need to branch into two cases:
\begin{enumerate}
\item $\epsilon=0,a=\pm 1$ (symmetric spaces), and
\item $\epsilon=1, v_{33}=0, v_{34}=0.$
\end{enumerate}

In the case of symmetric spaces $\epsilon=0,a=\pm 1$, there are no more parameters and the linear equations for the maximal $\Phi$-invariant subspace of $C_0+\mathbb{S}^{gen}$ can be solved directly and there are no additional solutions.

The case $\epsilon=1, v_{33}=0, v_{34}=0$ is more complicated, because we immediately obtain branching when we compute the subspace $C_1$ such that $\Phi(\fk)C_1\subset C_0+\mathbb{S}^{gen}.$

If $a\neq -\frac{3}{16}$, then $C_1$ is characterized by 

\begin{multicols}{3}
\begin{align*}
v_{15} &= 0 \\
v_{16} &= 0 
\end{align*}
\begin{align*} \\
v_{21} &= {\scriptstyle \frac{1}{2}} v_{31} 
\\
v_{28} &= {\scriptstyle \frac{1}{2}} v_{31} 
\end{align*}
\begin{align*} \\
v_{32} &= 0 \\
v_{46} &= \left( -{\scriptstyle \frac{16}{15}} a + {\scriptstyle \frac{9}{5}} \right) v_{31}
\end{align*}
\end{multicols}
\noindent
Then subspace $C_2$ of $C_1$ given by $\Phi(\fk)C_2\subset C_1+\mathbb{S}^{gen}$ consists of equations 
$$v_{31}(4a-3)=0,\  v_{31}(16a+3)(4a-3)=0,\  v_{31}(64a+27)(4a-3)=0,$$ and thus for $a\neq \frac{3}{4}$, there are no further solutions.
In the case $a= \frac{3}{4}$, there is one solution corresponding to the irreducible Killing tensor in Proposition \ref{expl-fKT}.

In the case $a= -\frac{3}{16}$, there are $6$ solutions corresponding to the irreducible Killing tensors in Proposition \ref{expl-fKT} given by $v_{46} = 2v_{31}$.

\section{Killing tensors on conformally non-flat homogeneous plane waves - Proof of Theorems \ref{main-KT} and \ref{main-KTCKT}}\label{sec-KT}

We are in the situation $a_1\neq a_2$. The following proposition provides the formulae for the irreducible Killing tensors from Theorem \ref{main-KT}. It is a direct computation to check that these tensors are Killing. To show that they are irreducible and that we have all of them, we show that they come from a complement of the space of the reducible Killing tensors we described in Proposition \ref{reducible-prop}. 
We also determine when they come from a complement of the space of reducible conformal Killing tensors we described in Proposition \ref{reducible-prop} to prove Theorem \ref{main-KTCKT}. In the case where the irreducible Killing tensors correspond to reducible conformal Killing tensors, we express them as multiples of the (inverse) metric plus the corresponding reducible conformal Killing tensor.

\begin{prop}\label{expl-KT}
The homogeneous plane waves $g_{0,a_1,a_2,\gamma}$ have one-dimensional spaces of irreducible Killing tensors 
$$c_1\big(z_1\partial_{z_1}+z_2\partial_{z_2}+2x_-\partial_{x_-}\big)\partial_{x_-}-c_1x_+g_{0,a_1,a_2,\gamma}^{-1}.$$

The homogeneous plane waves $g_{0,0,\pm 2,0}$ have two-dimensional spaces of irreducible Killing tensors 
$$\big(z_1\partial_{z_1}+z_2\partial_{z_2}+2x_-\partial_{x_-}\big)\big(c_1\partial_{z_1}+c_2\partial_{x_-}\big)- \big(c_1z_1+c_2x_+\big)g_{0,0,\pm 2,0}^{-1}.$$

The homogeneous plane waves $g_{0,\pm \frac83,\pm \frac23,0}$ have two-dimensional spaces of irreducible Killing tensors 
$$c_1\big(z_1\partial_{z_1}+z_2\partial_{z_2}+2x_-\partial_{x_-}\big)\partial_{x_-}-c_1x_+g_{0,\pm \frac83,\pm \frac23,0}^{-1}+c_2\big(z_2\partial_{z_1}\partial_{z_2}-z_1\partial_{z_2}^2\mp\frac23z_1z_2^2\partial_{x_-}^2\big).$$

The homogeneous plane waves $g_{1,0,a_2,0}$ have one-dimensional spaces of irreducible Killing tensors 

$$ c_1\big(z_1\partial_{z_1}+z_2\partial_{z_2}+2x_-\partial_{x_-}\big)\partial_{z_1}-c_1z_1g_{1,0,a_2,0}^{-1}.$$

The homogeneous plane wave $g_{1,0,\frac34,0}$ has a five-dimensional space of irreducible Killing tensors 

\begin{gather*}
c_1\big(z_1\partial_{z_1}+z_2\partial_{z_2}+2x_-\partial_{x_-}\big)\partial_{z_1}- c_1z_1g_{1,0,a_2,0}^{-1}\\
    + c_{3} \partial_{x_+}^2 -  \big({c_{4} z_{1} -  2c_{5}}\big) \partial_{x_+} \partial_{z_{1}} - \frac{ c_{3} z_{2}}{x_+} \partial_{x_+} \partial_{z_{2}} - \frac{ c_{3} z_{2}^2}{x_+^2} \partial_{x_+} \partial_{x_-} \\
    + \big(
    {c_{2} x_+^{\frac12}  z_{2} }+
    c_{4} x_-  -
    \frac{  c_{4} z_{2}^2}{4 x_+}\big) 
    \partial_{z_{1}}^2 -  \big(
    { c_{2} x_+^{\frac12}  z_{1} }
     - \frac{  c_{4}  z_{1} z_{2}}{2 x_+}
     +      \frac{   c_{5}  z_{2} }{ x_+}
        \big) \partial_{z_{1}} \partial_{z_{2}}\\
    -  \big(
        \frac{   c_{2} z_{1} z_{2} }{2 x_+^{\frac12}}
        -\frac{ c_{4}  z_{1} z_{2}^2 }{2 x_+^2}+
    \frac{   c_{5}  z_{2}^2 }{ x_+^2}
    \big) \partial_{z_{1}} \partial_{x_-} 
    + \big(
     \frac{   c_{3} x_-  }{ x_+}
    -\frac{c_{4}  z_{1}^2 }{4 x_+}
     + \frac{  c_{5} z_{1}}{ x_+}
    \big) \partial_{z_{2}}^2 \\
    + \big(
   \frac{ c_{2} z_{1}^2 }{2 x_+^{\frac12}}+
      \frac{  c_{3} x_-  z_{2} }{ x_+^2}+
               \frac{c_{3} z_{2}^3 }{4 x_+^3}
        -\frac{c_{4}  z_{1}^2 z_{2} }{4 x_+^2}
        +     \frac{   c_{5} z_{1} z_{2} }{ x_+^2}
    \big) \partial_{z_{2}} \partial_{x_-} \\
    + \big(
    \frac{   c_{2} z_{1}^2 z_{2} }{4 x_+^{\frac32}}+
    \frac{ 3 c_{3} z_{2}^4 }{16 x_+^4}+
    \frac{  c_{3} x_-  z_{2}^2 }{4 x_+^3}
    -\frac{ c_{4}  z_{1}^2 z_{2}^2 }{16 x_+^3}+
    \frac{  c_{5} z_{1} z_{2}^2}{4 x_+^3}
    \big) \partial_{x_-}^2.
\end{gather*}

The homogeneous plane wave $g_{1,0,-\frac3{16},0}$ has a six-dimensional space of irreducible Killing tensors 

\begin{gather*}
c_1z_1g_{1,0,a_2,0}^{-1}-c_1\big(z_1\partial_{z_1}+z_2\partial_{z_2}+2x_-\partial_{x_-}\big)\partial_{z_1}\\
 -2c_6x_+ \partial_{x_+}^2+\big(
  c_2x_+^{\frac34}
  -c_3x_+^{\frac34}z_1
 -c_5x_+^\frac12z_2
 -c_6z_2
 \big)\partial_{x_+}\partial_{z_2}\\
 + \big(
 -\frac{3c_2z_2}{4x_+^\frac14}
 +\frac{3c_3z_1z_2}{4x_+^\frac14}
 +\frac{c_5z_2^2}{2x_+^\frac12}
  +2c_6x_-\big)\partial_{x_+}\partial_{x_-}+
  \big(c_3x_+^\frac34x_--c_4x_+^\frac12z_2)\partial_{z_1}\partial_{z_2}
 \\ 
+ \big(-\frac{3c_3x_-z_2}{4x_+^\frac14}-\frac{c_3z_2^3}{16x_+^\frac54}+\frac{c_4z_2^2}{2x_+^\frac12} \big)\partial_{z_1}\partial_{x_-}
+ \big(c_4x_+^\frac12z_1+c_5x_+^\frac12x_-+c_6x_-\big) \partial_{z_2}^2\\
- \big(
\frac{c_4z_2z_1}{2x_+^\frac12}
+\frac{c_5z_2x_-}{2x_+^\frac12}
+\frac{c_5z_2^3}{16x_+^\frac32}
+\frac{c_6z_2^3}{16x_+^2}\big)\partial_{z_2}\partial_{x_-}
\\
+ \big(
-\frac{c_2z_2^3}{16x_+^\frac94}
+\frac{c_3z_2^3z_1}{16x_+^\frac94}
+\frac{c_4z_2^2z_1}{16x_+^\frac32}
+\frac{c_5z_2^2x_-}{16x_+^\frac32}
+\frac{3c_5z_2^4}{64 x_+^\frac52}+
\frac{3c_6z_2^2x_-}{16x_+^2}
+\frac{c_6z_2^4}{64x_+^3} \big)\partial_{x_-}^2.
\end{gather*}

The homogeneous plane waves $g_{1,4a_2+\frac34,a_2,0}$ have one-dimensional spaces of irreducible Killing tensors

\begin{gather*}
 c_1(2x_+^\frac12 z_2 \partial_{z_1}\partial_{z_2}-\frac{z_2^2}{x_+^\frac12} \partial_{z_1}\partial_{x_-} -2x_+^\frac12z_1 \partial_{z_2}^2+\frac{z_1z_2}{x_+^\frac12} \partial_{z_2}\partial_{x_-}- \frac{(4a_2+1)z_2^2z_1}{2x_+^\frac32}\partial_{x_-}^2.
\end{gather*}
\end{prop}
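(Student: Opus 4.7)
The plan is to mirror the strategy used in Sections \ref{sec-CKT} and \ref{sec-fKT}: apply the BGG framework from Section \ref{sec-bgg-how} with the projective normal map $\beta$ from Proposition \ref{pnormal} and the representation $\rho:\mathfrak{sl}(5,\R)\to \mathfrak{gl}(\mathbb{V})$ on the Cartan component $\mathbb{V}\subset S^2\Lambda^2(\R^5)^*$ already introduced in Section \ref{sec-fKT}. First, I would compute the prolongation connection $\Phi=\rho\circ\beta+\Psi$ on $\mathbb{V}$ for parameters $\epsilon,a_1,a_2,\gamma$ with $A:=a_1-a_2\neq 0$, using the algorithm of \cite{bgg-my} and verifying correctness via $\partial^* R^\Phi=0$; the resulting components $\Psi(e^i)_j$ generalize those of Section \ref{sec-fKT} by carrying explicit dependence on $A$ and $\gamma$.

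Next, compute the generic $\Phi$-invariant subspace $\mathbb{S}^{gen}$ annihilated by $R^\Phi$ and verify against Proposition \ref{reducible-prop} that its dimension ($21$ for $\epsilon=0$, $22$ for $\epsilon\neq 0$) accounts for all reducible Killing tensors together with the universal family $(z_1\partial_{z_1}+z_2\partial_{z_2}+2x_-\partial_{x_-})\partial_{x_-}-x_+ g^{-1}$ in the $\epsilon=0$ case. Then, as in the earlier sections, introduce variables $v_i$ parametrizing a complement $C$ of $\mathbb{S}^{gen}$ in $\mathbb{V}$ and determine $C_0\subset C$ with $R^\Phi(C_0)\subset \mathbb{S}^{gen}$. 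A small set of distinguished polynomial equations in $\epsilon,a_1,a_2,\gamma$ will appear at this stage and split the analysis into branches.

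From the statement of Theorem \ref{main-KT} I expect the branching to isolate, for $\epsilon=0$, the universal family plus the symmetric-space values $\gamma=0$ with $(a_1,a_2)\in\{(0,\pm 2),(\pm 8/3,\pm 2/3)\}$, and for $\epsilon=1$ the values $\gamma=0$, $a_1=0$ (valid for any $a_2$, with enlargements at the sporadic points $a_2=3/4$ and $a_2=-3/16$) together with the family $\gamma=0$, $a_1=4a_2+3/4$. Within each branch the free parameters are either fixed or reduced to a one-parameter family, so the iterative computation $\Phi(\fk)C_{i+1}\subset C_i+\mathbb{S}^{gen}$ stabilizes after finitely many steps and yields a finite-dimensional space of additional solutions; exponentiating via the map from Section \ref{sec-bgg-how} produces the formulae in the statement, which are then checked directly to be Killing tensors. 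Theorem \ref{main-KTCKT} follows by comparing, for each irreducible Killing tensor, its trace-free part against the list of reducible conformal Killing tensors from Proposition \ref{reducible-prop} and the irreducible ones from Proposition \ref{expicit-CKT}.

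The main obstacle will be the parameter branching. Unlike Section \ref{sec-fKT}, where only a single parameter $a$ is present, here up to three parameters must be carried through the linear algebra, so the distinguished equations become multilinear in $a_1,a_2,\gamma$ and must be factored carefully to avoid missing any sporadic value. The $\epsilon=1$ branch is particularly delicate because the conformal isometry $g_{1,a_1,a_2,\gamma}\simeq g_{0,a_1+\frac14,a_2+\frac14,\gamma}$ shifts parameters but does not preserve Killing tensors, so its classification cannot be read off from the $\epsilon=0$ analysis and must be carried out independently. I anticipate that the pattern $a_1=0$ in the $\epsilon=1$ case arises precisely because it is exactly the degeneracy making one row of the system $R^\Phi(C_0)\subset \mathbb{S}^{gen}$ drop rank, and that the distinguished equation separating the family $a_1=4a_2+3/4$ appears only at a second iteration step, which explains why it was overlooked in \cite{KO}.
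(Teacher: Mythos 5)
Your proposal follows essentially the same route as the paper's Section \ref{sec-KT}: the projective BGG prolongation built from $\beta$ of Proposition \ref{pnormal}, the generic invariant subspace $\mathbb{S}^{gen}$ compared against the reducible count of Proposition \ref{reducible-prop}, and the branch analysis on a complement $C$ driven by the distinguished equations (Lemma \ref{curv-KT}). The only step your outline underestimates is the final branch $\epsilon=1$, $\gamma\neq 0$, $a_1\neq 0$, $a_2\neq 0$, where the parametric linear systems cannot be solved directly and the paper has to resort to variable elimination and Groebner bases to exclude additional solutions.
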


The description of $\mathbb{V}$ remains as in the previous section, and the prolongation connection has the following $\Psi$ (a direct check that these formulae are correct can also be found in the Maple file \cite{bgg-comp} accompanying the arXiv submission):
\begin{multicols}{2}
\begin{align*}
\Psi(e^1)_{11} &= -{\scriptstyle\frac{1}{3}}(2a_1 - a_2)w_2 \\
\Psi(e^1)_{12} &= {\scriptstyle\frac{1}{3}}(a_1 - 2a_2)w_3 \\
\Psi(e^1)_{13} &= {\scriptstyle\frac{1}{3}}(a_1 + a_2)w_4 \\
\Psi(e^1)_{14} &= {\scriptstyle\frac{1}{3}}(a_1 - a_2)w_6 
\end{align*}
\begin{align*}\\
\Psi(e^1)_{15} &= {\scriptstyle\frac{1}{3}}a_1w_7\\
\Psi(e^1)_{16} &= {\scriptstyle\frac{1}{3}}a_2w_9 \\
\Psi(e^1)_{17} &= {\scriptstyle\frac{1}{6}}a_1(3w_4 - 2w_5) + {\scriptstyle\frac{1}{6}}a_2w_5 \\
\Psi(e^1)_{18} &= {\scriptstyle\frac{1}{12}}(a_1 - 3a_2) w_6
\end{align*}
\end{multicols}
\begin{multicols}{2}
\begin{align*}
\Psi(e^1)_{19} &= {\scriptstyle\frac{1}{12}}(a_1 + 2a_2)w_7 \\
\Psi(e^1)_{20} &= -{\scriptstyle\frac{1}{4}}a_1w_9 \\
\Psi(e^1)_{21} &= -{\scriptstyle\frac{1}{4}}a_1w_{10} \\
\Psi(e^1)_{24} &= {\scriptstyle\frac{1}{6}}a_2(3w_4 - 2w_8) + {\scriptstyle\frac{1}{6}}a_1w_8 \\
\Psi(e^1)_{25} &= {\scriptstyle\frac{1}{12}}(2a_1 + a_2)w_9\\
\Psi(e^1)_{26} &= {\scriptstyle\frac{1}{4}}a_2w_7\\
\Psi(e^1)_{28} &= -{\scriptstyle\frac{1}{4}}a_2w_{10} \\
\Psi(e^1)_{31} &= {\scriptstyle\frac{1}{6}}(a_1 + a_2)w_{10} 
\\
\Psi(e^1)_{35} &= {\scriptstyle\frac{1}{6}}a_1(\epsilon(-6w_4 + 4w_5) - 3\gamma w_6 + 9w_{13} \\
&- 12w_{17}) - {\scriptstyle\frac{1}{6}}a_2(2\epsilon w_5 -3w_6\gamma - 6w_{17})  \\
\Psi(e^1)_{36} &= {\scriptstyle\frac{1}{4}}(a_1 - a_2)\gamma(2w_4 - w_5 - w_8) \\
&+ {\scriptstyle\frac{1}{12}}a_1(2\epsilon w_6 - 15w_{14} - 6w_{18}) \\
&+ {\scriptstyle\frac{1}{12}}a_2(2\epsilon w_6 + 21w_{14} - 6w_{18}) \\
\Psi(e^1)_{37} &= {\scriptstyle\frac{1}{12}}a_1(2\epsilon w_7 - 3\gamma w_9 - 15w_{15} - 6w_{19}) \\
&- {\scriptstyle\frac{1}{12}}a_2(4w_7\epsilon - 3w_9\gamma +6w_{15} - 12w_{19}) \\
\Psi(e^1)_{38} &= {\scriptstyle\frac{1}{12}}a_1(2\epsilon w_9 + \gamma w_7 - 11w_{16} \\
&- 12w_{20} - 2w_{25}) - {\scriptstyle\frac{1}{12}}a_2(\gamma w_7 - 8w_{20}) \\
\Psi(e^1)_{39} &= {\scriptstyle\frac{1}{6}}a_1(\epsilon w_{10} - 6w_{21} - w_{31}) + {\scriptstyle\frac{1}{3}}a_2w_{21} \\
\Psi(e^1)_{40} &= {\scriptstyle\frac{1}{12}}a_1(-\gamma w_{10} - 8w_{27} + 8w_{32}) \\
&+ {\scriptstyle\frac{1}{12}}a_2(\gamma w_{10} - 8w_{32}) \\
\Psi(e^1)_{41} &= {\scriptstyle\frac{1}{6}}a_2(\epsilon(-6w_4 + 4w_8) + 3\gamma w_6 + 9w_{13} \\
&- 12w_{24}) - {\scriptstyle\frac{1}{6}}a_1(3\gamma w_6 + 2\epsilon w_8 - 6w_{24}) \\
\Psi(e^1)_{42} &= {\scriptstyle\frac{1}{12}}a_1(-4\epsilon w_9 - 3\gamma w_7 - 6w_{16} + 12w_{25}) \\
&+ {\scriptstyle\frac{1}{12}}a_2(3w_7\gamma + 2\epsilon w_9 - 15w_{16} - 6w_{25}) \\
\Psi(e^1)_{43} &= {\scriptstyle\frac{1}{12}}(-2\epsilon w_7 + \gamma w_9 + 11w_{15} + 2w_{19} \\
&- 12w_{26})a_2 - {\scriptstyle\frac{1}{12}}a_1(\gamma w_9 - 8w_{26}) \\
\Psi(e^1)_{45} &= {\scriptstyle\frac{1}{6}}a_2(\epsilon w_{10} - 6w_{28} - w_{31}) + {\scriptstyle\frac{1}{3}}a_1w_{28} \\
\Psi(e^1)_{46} &= -{\scriptstyle\frac{1}{3}}(a_1 + a_2)(\epsilon w_{10} - 3w_{31}) \\
\Psi(e^1)_{47} &= {\scriptstyle\frac{2}{3}}(a_1 - a_2)w_{27} + {\scriptstyle\frac{2}{3}}a_2w_{32} \\
\Psi(e^1)_{48} &= {\scriptstyle\frac{1}{3}}(2a_1 + a_2)w_{33} \\
\Psi(e^1)_{49} &= {\scriptstyle\frac{1}{3}}(a_1 + 2a_2)w_{34} \\
\Psi(e^1)_{50} &= {\scriptstyle\frac{2}{3}}a_2w_{21} + {\scriptstyle\frac{2}{3}}a_1w_{28} \\
\Psi(e^1)_{51} &= {\scriptstyle\frac{1}{3}}a_1w_{34} \\
\Psi(e^1)_{52} &= -{\scriptstyle\frac{1}{3}}a_2w_{33} \\
 & \\
\Psi(e^2)_{11} &= {\scriptstyle \frac{1}{6}} a_1(3 w_4 - 2 w_5)  + {\scriptstyle \frac{1}{6}} a_2 w_5  \\
\Psi(e^2)_{12} &= -{\scriptstyle \frac{1}{12}}  (a_1 + a_2) w_6 \\
\Psi(e^2)_{13} &= -{\scriptstyle \frac{1}{12}}  (a_1 - 2 a_2) w_7 \\
\Psi(e^2)_{14} &= -{\scriptstyle \frac{1}{12}} a_1 w_9 
\end{align*}
\begin{align*}\\
\Psi(e^2)_{15} &= -{\scriptstyle \frac{1}{12}} a_1 w_{10} \\
\Psi(e^2)_{17} &= a_1 w_7 \\
\Psi(e^2)_{18} &= {\scriptstyle \frac{1}{3}} a_1 w_9 \\
\Psi(e^2)_{19} &= {\scriptstyle \frac{1}{3}} a_1 w_{10} \\
\Psi(e^2)_{24} &= {\scriptstyle\frac{1}{4}} a_2 w_7 \\
\Psi(e^2)_{35} &= -{\scriptstyle\frac{1}{2}} a_1 (4 \epsilon w_7 + 6 w_{15} - 9 w_{19})
\\
\Psi(e^2)_{36} &= {\scriptstyle \frac{1}{12}} a_1 (-8 \epsilon w_9 + 8 \gamma w_7 - 16 w_{16} - 9 w_{20}
 \\
&+ 20 w_{25}) - {\scriptstyle\frac{1}{12}} a_2 (8w_7 \gamma - 7 w_{20})   \\
\Psi(e^2)_{37} &={\scriptstyle \frac{1}{12}} a_1 (-8 \epsilon w_{10} - 9 w_{21} + 20 w_{31}) \\
&+ {\scriptstyle \frac{1}{6}} a_2 w_{21} \\
\Psi(e^2)_{38} &= -{\scriptstyle\frac{3}{4}} a_1 (w_{27} - 3 w_{32}) \\
\Psi(e^2)_{39} &= {\scriptstyle\frac{3}{2}} a_1 w_{33} \\
\Psi(e^2)_{40} &= a_1 w_{34} \\
\Psi(e^2)_{41} &= {\scriptstyle \frac{1}{6}}  a_2 (-4 \epsilon w_7 - 4 \gamma w_9 - 2 w_{15} + 7 w_{19}) \\
&+ {\scriptstyle\frac{1}{3}} a_1 (2w_9 \gamma - w_{26})  \\
\Psi(e^2)_{42} &= {\scriptstyle \frac{1}{12}} a_1(4 \gamma w_{10} - 2 w_{27} - 2 w_{32})  \\
&- {\scriptstyle \frac{1}{12}} a_2 (4\gamma w_{10} + 5 w_{27} - 7 w_{32}) \\
\Psi(e^2)_{43} &= {\scriptstyle\frac{5}{12}} a_2 w_{21} - {\scriptstyle \frac{1}{3}} a_1 w_{28} \\
\Psi(e^2)_{45} &= {\scriptstyle \frac{1}{6}} a_2 w_{33} \\
\Psi(e^2)_{46} &= -{\scriptstyle \frac{1}{3}}  (a_1 - a_2) w_{33} \\
\Psi(e^2)_{47} &= -{\scriptstyle\frac{2}{3}} a_1 w_{34} \\
& \\
\Psi(e^3)_{11} &= -{\scriptstyle \frac{1}{12}} \left( a_1 + a_2 \right)  w_6 \\
\Psi(e^3)_{12} &=  {\scriptstyle \frac{1}{6}}  a_2 \left( 3 w_4 - 2 w_8 \right)  + {\scriptstyle \frac{1}{6}} a_1 w_8  \\
\Psi(e^3)_{13} &= {\scriptstyle \frac{1}{12}}\left( 2 a_1 - a_2 \right) w_9  \\
\Psi(e^3)_{14} &= {\scriptstyle \frac{1}{12}} a_2 w_7 \\
\Psi(e^3)_{16} &= -{\scriptstyle \frac{1}{12}} a_2 w_{10} \\
\Psi(e^3)_{17} &= {\scriptstyle \frac{1}{4}} a_1 w_9 \\
\Psi(e^3)_{18} &= {\scriptstyle \frac{5}{12}} a_2 w_7 \\
\Psi(e^3)_{24} &= a_2 w_9 \\
\Psi(e^3)_{25} &= {\scriptstyle \frac{1}{3}} a_2 w_{10} \\
\Psi(e^3)_{35} &=  {\scriptstyle \frac{1}{6}} a_1 \left( -4 \epsilon w_9 + 4 \gamma w_7 - 2 w_{16} + 7 w_{25} \right)   \\
&- {\scriptstyle \frac{1}{3}} a_2 ( 2w_7 \gamma -  w_{20} ) \\
\Psi(e^3)_{36} &=  {\scriptstyle \frac{1}{12}} ( -8 \epsilon w_7 - 8 \gamma w_9 - 16 w_{15} + 20 w_{19} \\
&+ 9 w_{26} )  a_2 + {\scriptstyle \frac{1}{12}} \left( 8w_9 \gamma - 7 w_{26} \right) a_1 \\
\Psi(e^3)_{37} &=  {\scriptstyle \frac{1}{12}} a_1 \left( 4 \gamma w_{10} - 5 w_{27} - 2 w_{32} \right)   \\
&- {\scriptstyle \frac{1}{6}} a_2 \left( 2\gamma w_{10} + w_{27} - 2w_{32} \right) \\
\Psi(e^3)_{38} &= {\scriptstyle \frac{1}{3}} a_2 w_{21} - {\scriptstyle \frac{5}{12}} a_1 w_{28} 
\\
\Psi(e^3)_{39} &= {\scriptstyle \frac{1}{6}} a_1 w_{34}
\end{align*}
\end{multicols}

\begin{multicols}{2}
\begin{align*}
\Psi(e^3)_{40} &= {\scriptstyle \frac{1}{3}} a_2 w_{33} \\
\Psi(e^3)_{41} &= -{\scriptstyle \frac{1}{2}} a_2 \left( 4 \epsilon w_9 + 6 w_{16} - 9 w_{25} \right) \\
\Psi(e^3)_{42} &= {\scriptstyle \frac{1}{12}}   a_2\left( -8 \epsilon w_{10} - 9 w_{28} + 20 w_{31} \right) + {\scriptstyle \frac{1}{6}} a_1 w_{28} \\
\Psi(e^3)_{43} &= {\scriptstyle \frac{3}{4}} a_2 \left( w_{27} + 2 w_{32} \right) \\
\Psi(e^3)_{45} &= {\scriptstyle \frac{3}{2}} a_2 w_{34} \\
\Psi(e^3)_{46} &= {\scriptstyle \frac{1}{3}}  \left( a_1 - a_2 \right) w_{34} \\
\Psi(e^3)_{47} &= {\scriptstyle \frac{2}{3}} a_2 w_{33} \\
& \\
\Psi(e^4)_{11} &= -{\scriptstyle\frac{1}{12}}(a_{1} - 2a_{2})w_{7} \\
\Psi(e^4)_{12} &= {\scriptstyle\frac{1}{12}}(2a_{1} - a_{2})w_{9} 
\end{align*}
\begin{align*}  
\\ 
\Psi(e^4)_{13} &= {\scriptstyle\frac{1}{6}}(a_{1} + a_{2})w_{10}\\
\Psi(e^4)_{17} &= {\scriptstyle\frac{1}{4}}a_{1}w_{10} \\
\Psi(e^4)_{24} &= {\scriptstyle\frac{1}{4}}a_{2}w_{10} \\
\Psi(e^4)_{36} &= {\scriptstyle\frac{1}{12}}a_{1}(4\gamma w_{10} - 2w_{27} - 5w_{32}) \\
&- {\scriptstyle\frac{1}{12}}a_{2}(4\gamma w_{10} + 2w_{27} - 7w_{32}) 
\\
\Psi(e^4)_{37} &= -{\scriptstyle\frac{1}{12}}  (7a_{1} + 2a_{2})w_{33} \\
\Psi(e^4)_{38} &= -{\scriptstyle\frac{7}{12}}a_{1}w_{34} \\
\Psi(e^4)_{41} &= {\scriptstyle\frac{1}{6}}a_{2}(-4\epsilon w_{10} + 7w_{31}) - {\scriptstyle\frac{1}{3}}a_{1}w_{28} \\
\Psi(e^4)_{42} &= -{\scriptstyle\frac{1}{12}}(2a_{1} + 7a_{2})w_{34} \\
\Psi(e^4)_{43} &= {\scriptstyle\frac{7}{12}}a_{2}w_{33} 
\end{align*}
\end{multicols}

Let us now write down the results of the third step from Section \ref{ts-cec}, starting with the generic solutions. Moreover, this allows us to show that, generically, most of the Killing tensors are reducible.

\begin{prop}\label{generic-KT}
If  $a_1\neq a_2$,  then, generically, 
\begin{enumerate}
\item there is $22$-dimensional space of conformal Killing tensors, which are all reducible for $\epsilon\neq 0$, or 
\item there are irreducible Killing tensors from Proposition \ref{expl-KT} for $\epsilon= 0$.
\end{enumerate}
\end{prop}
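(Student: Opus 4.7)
The plan is to execute the algorithm of Section \ref{ts-cec} in exactly the same spirit as in the proofs of Propositions \ref{generic-CKT} and \ref{generic-fKT}: compute the generic $\Phi$-invariant subspace $\mathbb{S}^{gen}\subset \mathbb{V}$ annihilated by the curvature $R^\Phi$ of the projective prolongation connection $\Phi$ listed in Section \ref{sec-KT}, then compare its dimension with the dimension of the reducible Killing tensors recorded in Proposition \ref{reducible-prop}. Treating $\epsilon,\gamma,a_1,a_2$ as generic (so in particular $a_1-a_2$ is invertible), this reduces to a linear algebra problem over the field of rational functions in the parameters, which can be carried out with the Maple file \cite{bgg-comp}.

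The expected output is a $22$-dimensional generic solution space, with a list of free variables and dependency relations paralleling those of Proposition \ref{generic-fKT} but now with $a_1$ and $a_2$ entering separately (the conformally flat degeneration $a_1=a_2$ is explicitly excluded, so the factors of $a_1-a_2$ that occur in the dependencies are legitimately invertible). The dimension count then gives the dichotomy by comparison with Proposition \ref{reducible-prop}. In the case $\epsilon\neq 0$, item (5) of Proposition \ref{reducible-prop} yields a $22$-dimensional reducible space; by the standard inclusion-and-continuity argument used in Propositions \ref{generic-CKT} and \ref{generic-fKT} (reducible tensors always lie in $\mathbb{S}^{gen}$, and both sides have the same dimension), $\mathbb{S}^{gen}$ is exactly the reducible space, proving (1). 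In the case $\epsilon=0$, item (3) only gives $21$ reducible Killing tensors, so a one-parameter family of generic solutions must be irreducible; matching the remaining free parameter against the ansatz in Proposition \ref{expl-KT} identifies this extra tensor with
\[
c_1(z_1\partial_{z_1}+z_2\partial_{z_2}+2x_-\partial_{x_-})\partial_{x_-}-c_1 x_+\, g_{0,a_1,a_2,\gamma}^{-1},
\]
proving (2).

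The main potential obstacle is the customary bookkeeping needed to make the continuity argument on the $\epsilon\neq 0$ branch airtight: one has to check that no parameter-dependent rank drop in the reducible family spoils the equality with $\mathbb{S}^{gen}$ at the generic point. This is the same mild subtlety successfully handled in the analogous Propositions \ref{generic-CKT} and \ref{generic-fKT}, and no fundamentally new issue is expected here. Verification that the formulae obtained for $\Psi$ are consistent (via $\partial^* R^{\Phi}(s)=0$ as in Section \ref{ss-sec}) proceeds automatically in the accompanying Maple file, so the whole argument is algorithmic once the prolongation connection is in place.
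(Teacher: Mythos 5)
Your proposal follows the paper's own proof essentially verbatim: compute the generic $\Phi$-invariant subspace $\mathbb{S}^{gen}$ (a $22$-dimensional space, found by the Maple computation), then compare with the dimensions $22$ (for $\epsilon\neq 0$) and $21$ (for $\epsilon=0$) of reducible Killing tensors from Proposition \ref{reducible-prop} to conclude reducibility in the first case and to isolate the one-dimensional irreducible family $c_1(z_1\partial_{z_1}+z_2\partial_{z_2}+2x_-\partial_{x_-})\partial_{x_-}-c_1x_+g^{-1}$ of Proposition \ref{expl-KT} in the second. This is exactly the argument given in the paper, including the dimension-and-continuity step.
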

\begin{proof}
The generic $\Phi$-invariant subspace $\mathbb{S}^{gen}$ annihilated by the curvature $R^{\Phi}$ of $\Phi$ can be computed directly; see Maple file \cite{bgg-comp}. The resulting subset $\mathbb{S}^{gen}\subset \mathbb{V}$ provides a $22$-dimensional space of Killing tensors. We have the following $22$ free variables
\begin{gather*} w_{1},\dots,w_{14},w_{17},\dots,w_{19},w_{24},w_{25},w_{35},w_{36},w_{41}
\end{gather*}
and the following dependent, where missing variables are either vanishing or are determined by the equations for $\mathbb{V}:$

\begin{multicols}{2}
\begin{align*}
w_{15}&=\epsilon w_7-w_{19}\\
w_{16}&=\epsilon w_9-w_{25}\\
w_{20}&=\gamma w_7\\
w_{26}&=\gamma w_9\\ 
w_{27}&= {\scriptstyle \frac13 } \gamma w_{10}\\
w_{31}&=\epsilon w_{10}\\
w_{32}&={\scriptstyle \frac23} \gamma w_{10}\\
w_{37}&=-2\epsilon^2w_7+{\scriptstyle \frac{1}{12}}a_1w_7-{\scriptstyle \frac16 }a_2w_7+3\epsilon w_{19}\\
w_{38}&=-2\epsilon\gamma w_7-{\scriptstyle \frac14 }a_1w_9+3\gamma w_{19}
\end{align*}
\begin{align*}
\\
w_{39}&=-{\scriptstyle \frac14 }aw_{10}\\
w_{40}&=-{\scriptstyle \frac13 }\gamma\epsilon w_{10}\\
w_{42}&=-2\epsilon^2w_9-{\scriptstyle \frac16 }a_1w_9+{\scriptstyle \frac{1}{12}}a_2w_9+3\epsilon w_{25}\\
w_{43}&=-2\epsilon\gamma w_9+{\scriptstyle \frac14 }a_2w_7+3\gamma w_{25}\\
w_{45}&=-{\scriptstyle \frac14 }aw_{10}\\
w_{46}&=-{\scriptstyle \frac16 }(-6\epsilon^2+a_1+a_2)w_{10}\\
w_{47}&={\scriptstyle \frac23}\gamma\epsilon w_{10}\\
w_{50}&=\gamma^2 w_{10}
\end{align*}
\end{multicols}

Comparing this with Proposition \ref{reducible-prop}, we see that these are reducible for dimensional and continuity reasons for $\epsilon\neq 0$. Otherwise, there are irreducible Killing tensors from Proposition \ref{expl-KT} for $\epsilon= 0$.
\end{proof}

In Proposition \ref{generic-KT}, we exhausted all the reducible Killing tensors. Therefore, the additional Killing tensors that may appear for special values of the parameters are irreducible. To finish the proof of Theorem \ref{main-KT}, we restrict ourselves to complement $C$ of $\mathbb{S}^{gen}$ in $\mathbb{V}$ (again parametrized by $v_i$ variables) and given by
\begin{gather*}
v_{1}=0,\dots,v_{14}=0,v_{17}=0,\dots,v_{19}=0,v_{24}=0,v_{25}=0,v_{35}=0,v_{36}=0,v_{41}=0.
\end{gather*}
We first find the subspace of $C$ annihilated by the curvature $R^{\Phi}$ of the prolongation connection $\Phi$ restricted to $C$, i.e., the subspace $C_0\subset C$ such that $R^{\Phi}(C_0)\subset \mathbb{S}^{gen}$. Here we immediately distinguish several branches.

\begin{lem}\label{curv-KT}
If  $a_1\neq a_2, \gamma\neq 0$,  then the subspace $C_0$ of $C$ such that $R^{\Phi}(C_0)\subset \mathbb{S}^{gen}$ is given by

\begin{align*}
v_{20} &={\scriptstyle  \frac{1}{\gamma(a_1-a_2)}}(-5\epsilon v_{15}-4v_{37})a_1\\
 v_{26} &= {\scriptstyle \frac{1}{\gamma(a_1-a_2)}}(5\epsilon v_{16}+4v_{42}) a_2\\
 v_{27} &= {\scriptstyle \frac{1}{\gamma(a1-a2)^2}} (-a_1^2\gamma v_{32}-a_2 (\gamma v_{32}+(2 v_{21}-2v_{28})\epsilon-4v_{39}+4v_{45})a_1+2a_2^2\gamma v_{32})\\
  v_{38} &={\scriptstyle  \frac14 \frac{1}{\gamma(a_1-a_2)} }\big(-5v_{15}(a_1-a_2)\gamma^2+(5(\epsilon v_{16}+ {\scriptstyle\frac45} v_{42}))(a_1+a_2)\gamma-(10(\epsilon v_{15}+ {\scriptstyle\frac45} v_{37}))\epsilon a_1\big)\\
 v_{40} &={\scriptstyle \frac{1}{12} \frac{1}{\gamma(a_1-a_2)^2}}\big((a_1-a_2)((v_{21}-5v_{28}+7v_{31})a_1-(7(v_{21}+{\scriptstyle \frac17 }v_{28}-2v_{31}))a_2)\gamma^2 \\ &-18v_{32}(a_1-a_2)(a_1+{\scriptstyle \frac12} a_2)\epsilon\gamma-12\epsilon ((v_{21}-v_{28})\epsilon-2v_{39}+2v_{45})a_2a_1\big) 
 \\
  v_{43} &= {\scriptstyle \frac14 \frac{1}{\gamma(a_1-a_2)}} \big(-5v_{16}(a_1-a_2)\gamma^2+(5(\epsilon v_{15}+ {\scriptstyle \frac45} v_{37}))(a_1+a_2)\gamma+10a_2(\epsilon v_{16}+ {\scriptstyle\frac45} v_{42})\epsilon \big)\\
 v_{46} &={\scriptstyle \frac{1}{4(a_1-a_2)} }\big(((-2v_{21}+7v_{31})\epsilon-3\gamma v_{32}+4v_{39})a_1+2a_2((v_{28}-{\scriptstyle \frac72} v_{31})\epsilon+{\scriptstyle \frac32} \gamma v_{32}-2v_{45})\big)\\
 v_{47} &= ({\scriptstyle \frac{1}{12}}(-2v_{21}-2v_{28}+7v_{31}))\gamma+{\scriptstyle \frac34} \epsilon v_{32}\\
 v_{50} &={\scriptstyle \frac{1}{2(a_1-a_2)} }\big((\epsilon v_{28}+3\gamma v_{32}-2 v_{45})a_1-3a_2(\gamma v_{32}+{\scriptstyle \frac13} v_{21}\epsilon-{\scriptstyle \frac23 }v_{39})\big)\\
\end{align*}
together with the distinguished condition $$(a_1a_2\gamma+a_2^2\gamma)v_{21}+(a_1^2\gamma+a_1a_2\gamma)v_{28}+3a_1a_2\epsilon v_{32}-7a_1a_2\gamma v_{31} = 0,$$
and the following equal to zero
\begin{gather*}
    v_{33}, v_{34}, v_{48}, v_{49}, v_{51 }, v_{52 }, v_{53 }, v_{54 }, v_{55 }.
\end{gather*}

If  $a_1\neq a_2, \gamma= 0$,  then the subspace $C_0$ of $C$ such that $R^{\Phi}(C_0)\subset \mathbb{S}^{gen}$ is given by

\begin{align*}
 v_{38} &= {\scriptstyle \frac{1}{4(a_1-a_2)}}\big(((5 v_{16}+2 v_{20})\epsilon+4 v_{42})a_1-2 a_2\epsilon v_{20}\big) \\
 v_{43} & = {\scriptstyle \frac{1}{4(a_1-a_2)}} \big(((5 v_{15}-2v_{26})\epsilon+4v_{37})a_2+2a_1\epsilon v_{26}\big) \\
  v_{46} &= {\scriptstyle  \frac{1}{4(a_1-a_2)}} \big(((-2 v_{21}+7v_{31})a_1+(2(v_{28}-\frac72 v_{31}))a_2)\epsilon+4a_1v_{39}-4a_{2}v_{45}\big) \\
   v_{47} &= {\scriptstyle \frac{1}{8(a_1-a_2)}} \big(((-4v_{27}+14v_{32})a_1-2a_2(v_{27}+{\scriptstyle \frac52} v_{32}))\epsilon+8 v_{40}(a_1+{\scriptstyle \frac12} a_2)\big)\\ 
    v_{48} &= {\scriptstyle \frac{1}{16(a_1-a_2)}} (13a_1-16a_2)\epsilon v_{33} \\
      v_{49} &={\scriptstyle \frac{1}{16(a_1-a_2)}} (16a_1-13a_2)\epsilon v_{34} \\
       v_{50} &= {\scriptstyle  \frac{1}{2(a_1-a_2)} }\big((\epsilon v_{28}-2 v_{45})a_1-a_2(\epsilon v_{21}-2v_{39})\big)  \\
  v_{51} &= -{\scriptstyle \frac{1}{64} \frac{1}{(a1-a2)^2}}a_1\epsilon v_{34}(16a_1-19a_2) \\
  v_{52} &= -{\scriptstyle \frac{1}{64} \frac{1}{(a1-a2)^2} } a_2 \epsilon  v_{33}(19a_1-16a_2) 
\end{align*}
together with the distinguished conditions
\begin{multicols}{2}
\begin{align*}
&\epsilon a_2 v_{34}=0 \\
&\epsilon a_1 v_{33}=0 \\
&\epsilon(4a_1^2 - 2a_2^2)v_{27} + \epsilon(-8a_1^2 + 4a_1a_2 + a_2^2)v_{32} \\
&+ (-8a_1^2 + 4a_2^2)v_{40} = 0 
\end{align*}
\begin{align*}\\
&a_1(5\epsilon v_{15} + 4v_{37}) = 0 \\
&a_2(5\epsilon v_{16} + 4v_{42}) = 0 \\
&a_1a_2(2\epsilon v_{27} - 3\epsilon v_{32} - 4v_{40}) = 0 \\
&a_1a_2(\epsilon v_{21} - \epsilon v_{28} - 2v_{39} + 2v_{45}) = 0
\end{align*}
\end{multicols}
and the following equal to zero
\begin{gather*}
    v_{53}, v_{54} ,v_{55}.
\end{gather*}
\end{lem}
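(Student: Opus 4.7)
The plan is to turn the condition $R^\Phi(C_0)\subset\mathbb{S}^{gen}$ into a concrete linear system in the coordinates $v_i$ on $C$, with coefficients polynomial in the parameters $\epsilon,\gamma,a_1,a_2$. Using the explicit formula for $\beta$ from Proposition \ref{pnormal} together with the components of $\Psi$ displayed above, I would compute $R^\Phi(e^i,e^j)=[\Phi(e^i),\Phi(e^j)]-\Phi([e^i,e^j])$ applied to a generic element of $C$, and then project onto a complement of $\mathbb{S}^{gen}$ in $\mathbb{V}$, i.e.\ onto the $22$ directions listed in Proposition \ref{generic-KT}. Requiring this projection to vanish yields the linear system whose solution space is $C_0$.

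Next I would solve the system in a triangular fashion, ordering the equations by the grading component of $\mathbb{V}$ to which they belong. Under the standing assumption $a_1\neq a_2$, most equations can be pivoted uniformly in the parameters, producing the closed expressions for the dependent variables in the statement. When $\gamma\neq 0$, the factors of $\gamma$ coming from the off-diagonal entries of $\beta$ can be used as pivots; this gives the first set of formulae and leaves the single distinguished scalar condition on $v_{21},v_{28},v_{31},v_{32}$. When $\gamma=0$, those pivots vanish identically, so one must re-pivot using $\epsilon$ instead. This produces the second set of formulae and the larger list of distinguished conditions, such as $\epsilon a_1 v_{33}=0$, $\epsilon a_2 v_{34}=0$, and the four bilinear balance conditions, which reflect the further parameter degenerations at which additional free variables appear.

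The main obstacle is the careful tracking of pivots: at each elimination step one must record which factor among $\gamma,\epsilon,a_1,a_2,a_1-a_2$ is being divided out, so that no hidden specialization is overlooked and the split into the two advertised branches genuinely captures all residual relations. Since the computations are mechanical but lengthy, I would delegate them to the Maple file \cite{bgg-comp} accompanying the paper, and then verify by direct substitution that the listed closed formulae together with the distinguished conditions exactly exhaust the annihilator of the projected curvature restricted to $C$.
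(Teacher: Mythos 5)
Your proposal follows essentially the same route as the paper: the paper offers no written argument for this lemma beyond setting up the condition $R^{\Phi}(C_0)\subset\mathbb{S}^{gen}$ as an overdetermined parametric linear system (with $\Phi=\rho\circ\beta+\Psi$ assembled from Proposition \ref{pnormal} and the displayed components of $\Psi$) and delegating the elimination, including the branching between $\gamma\neq 0$ and $\gamma=0$, to the accompanying Maple file \cite{bgg-comp}. One small correction to your setup: the condition $R^{\Phi}(v)\in\mathbb{S}^{gen}$ is the vanishing of the component of $R^{\Phi}(v)$ in the $28$-dimensional complement $C$ (equivalently, that the coordinates of $R^{\Phi}(v)$ satisfy the dependency relations of Proposition \ref{generic-KT}), not the vanishing of the $22$ free coordinates listed there, which parametrize $\mathbb{S}^{gen}$ itself rather than its complement.
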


We first deal with the branch $\epsilon=0,\gamma=0,a_1=0,a_2=\pm 2$, where we can directly find an additional irreducible Killing tensor from Proposition \ref{expl-KT}.

Next, we deal with the branch $\epsilon=0,\gamma=0,a_1\neq 0$, where we can assume $a_2=\pm 2+a_1\neq 0$. The distinguished equations for $C_0$ from Lemma \ref{curv-KT} simplify to $$v_{37}=0,v_{42}=0,v_{39}=0,v_{40}=0.$$ Computing the $\Phi$-invariant subspace of $C_0+\mathbb{S}^{gen}$ then leads to two distinguished cases $a_1=\pm \frac83,a_2=\pm \frac23$ with one additional irreducible Killing tensor from Proposition \ref{expl-KT}. Otherwise, there is no additional solution.

Next, we deal with the branch $\epsilon=1,\gamma=0,a_1= 0,a_2\neq 0$. The distinguished equations for $C_0$ from Lemma \ref{curv-KT} simplify to $$v_{16} = -\frac45 v_{42}, v_{32} = 2 v_{27}-4 v_{40}, v_{34} = 0.$$ 
Computing the $\Phi$-invariant subspace of $C_0+\mathbb{S}^{gen}$ then leads to several distinguished cases: 
\begin{itemize}
\item[(i)] Case $a_2=\frac34$ with the additional five-dimensional space of the irreducible Killing tensors of Proposition \ref{expl-KT}.
\item[(ii)] Case $a_2=-\frac3{16}$ with the additional six-dimensional space of the irreducible Killing tensors of Proposition \ref{expl-KT}. 
\item[(iii)] Otherwise, there is always an additional one-dimensional space of the irreducible Killing tensors of Proposition \ref{expl-KT}.
\end{itemize}

\medskip
Next, we deal with the branch $\epsilon=1,\gamma=0,a_1\neq 0,a_2\neq 0$. The distinguished equations for $C_0$ from Lemma \ref{curv-KT} simplify to
$$v_{15} = -\frac45 v_{37}, v_{21} = -2 v_{45}+v_{28}+2 v_{39}, v_{27} = 2 v_{40}, v_{32} = 0, v_{33} = 0, v_{34} = 0,v_{42}=-\frac54 v_{16}.$$ Computing the $\Phi$-invariant subspace of $C_0+\mathbb{S}^{gen}$ then leads to distinguished case $a_1=4a_2+\frac34$ with one additional irreducible Killing tensor from Proposition \ref{expl-KT}. Otherwise, there is no additional irreducible Killing tensor.

\medskip
Next, we deal with the branch $\epsilon=0,\gamma=1,a_1=0,a_2\neq 0$. The distinguished equations for $C_0$ from Lemma \ref{curv-KT} simplify to
$$v_{21}=0.$$ Computing the $\Phi$-invariant subspace of $C_0+\mathbb{S}^{gen}$ then shows that there is no additional irreducible Killing tensor.

\medskip
Next, we deal with the branch $\epsilon=0,\gamma=1,a_1\neq 0,a_2\neq 0$. The distinguished equations for $C_0$ from Lemma \ref{curv-KT} simplify to
$$v_{31} = \frac17 (\frac{a_1 a_2+a_2^2}{a_1a_2}v_{21}+\frac{a_1^2+a_1a_2}{a_1a_2}v_{28}).$$ Computing the $\Phi$-invariant subspace of $C_0+\mathbb{S}^{gen}$ then shows that there is no additional irreducible Killing tensor.

\medskip
Next, we deal with the branch $\epsilon=1,a_1=0,a_2\neq 0,\gamma\neq 0$. The distinguished equations for $C_0$ from Lemma \ref{curv-KT} simplify to
$$v_{21}=0.$$ Computing the $\Phi$-invariant subspace of $C_0+\mathbb{S}^{gen}$ then shows that there is no additional irreducible Killing tensor.

\medskip
Finally, we investigate the branch $\epsilon=1,a_1\neq 0,a_2\neq 0,\gamma\neq 0$. The distinguished equations for $C_0$ from Lemma \ref{curv-KT} allow us to eliminate
$v_{31}.$ The actual computations are done in \cite{bgg-comp}; we only briefly summarize here our computation of the $\Phi$-invariant subspace of $C_0+\mathbb{S}^{gen}$, because this goes beyond the direct solving of the parametric equations done in the previous cases. It requires us to generate equations for $\Phi(\fk)(C_0+\mathbb{S}^{gen})\subset C_0+\mathbb{S}^{gen}$, $\Phi(\fk)\Phi(\fk)(C_0+\mathbb{S}^{gen})\subset C_0+\mathbb{S}^{gen}$, $\Phi(\fk)\Phi(\fk)\Phi(\fk)(C_0+\mathbb{S}^{gen})\subset C_0+\mathbb{S}^{gen}$ and $\Phi(\fk)\Phi(\fk)\Phi(\fk)\Phi(\fk)(C_0+\mathbb{S}^{gen})\subset C_0+\mathbb{S}^{gen}$. These can be separated to equations for variables $v_{15},v_{16},v_{37},v_{42}$ and equations for variables $v_{21},v_{28},v_{32},v_{39},v_{45}$, and we separately show that there are no nontrivial solutions. We cannot directly solve the equations involving the parameters and thus, we solve them in the following alternative way:

We pick a pair of variables and eliminate the other variables from the equations. The linear dependence of the remaining equations leads to determinants built from the coefficients, and using Groebner bases, we show that there are no solutions except the algebraic subvarieties, where we divide by zero  during eliminating the other variables. After we restrict to the intersections of these algebraic subvarieties, we can show that there are no nontrivial solutions on them.

\end{document}